\title{Global optimality in minimum compliance topology optimization of frames and shells by moment-sum-of-squares hierarchy}
\author{Marek Tyburec \and Jan Zeman \and Martin Kru\v{z}\'ik \and Didier Henrion}
\institute{Marek Tyburec\textsuperscript{1}\\
	\email{marek.tyburec@fsv.cvut.cz}\\
	ORCID: 0000-0003-0798-0948\\
	\\
	Jan Zeman\textsuperscript{1}\\
	\email{Jan.Zeman@cvut.cz}\\
	ORCID: 0000-0003-2503-8120\\
	\\
	Martin Kru\v{z}\'{i}k\textsuperscript{2}\\
	\email{kruzik@utia.cas.cz}\\
	ORCID: 0000-0003-1558-5809\\
	\\
	Didier Henrion\textsuperscript{3,4}\\
	\email{henrion@laas.fr}\\
	ORCID: 0000-0001-6735-7715\\
	\\\
	\textsuperscript{1}Czech Technical University in Prague, Faculty of Civil Engineering, Department of Mechanics, Th\'{a}kurova 7, 16629 Prague 6, Czech Republic\\
	\\
	\textsuperscript{2}Czech Technical University in Prague, Faculty of Civil Engineering, Department of Physics, Th\'{a}kurova 7, 16629 Prague 6, Czech Republic\\
	\\
	\textsuperscript{3}Czech Technical University in Prague, Faculty of Electrical Engineering, Department of Control Engineering, Karlovo n\'{a}m\v{e}st\'{i} 13, 12135 Prague 2, Czech Republic\\
	\\
	\textsuperscript{4}LAAS-CNRS, 7 avenue du Colonel Roche, 31400 Toulouse, France
}
\newcommand*\circled[1]{\tikz[baseline=(char.base)]{
		\node[shape=circle,draw,inner sep=0pt,fill=white, minimum size=4mm] (char) {#1};}}
\newcommand{\hiddenbox}[1]{}
\newtheorem{assumption}{Assumption}
\begin{document}
\tolerance=2500
\maketitle

\begin{abstract}
The design of minimum-compliance bending-resistant structures with continuous cross-section parameters is a~challenging task because of its inherent non-convexity. Our contribution develops a strategy that facilitates computing all guaranteed globally optimal solutions for frame and shell structures under multiple load cases and self-weight. To this purpose, we exploit the fact that the stiffness matrix is usually a polynomial function of design variables, allowing us to build an equivalent non-linear semidefinite programming formulation over a semi-algebraic feasible set. This formulation is subsequently solved using the Lasserre moment-sum-of-squares hierarchy, generating a~sequence of outer convex approximations that monotonically converges from below to the optimum of the original problem. Globally optimal solutions can subsequently be extracted using the Curto-Fialkow flat extension theorem. Furthermore, we show that a simple correction to the solutions of the relaxed problems establishes a feasible upper bound, thereby deriving a simple sufficient condition of global $\varepsilon$-optimality. When the original problem possesses a~unique minimum, we show that this solution is found with a~zero optimality gap in the limit. These theoretical findings are illustrated on several examples of topology optimization of frames and shells, for which we observe that the hierarchy converges in a finite (rather small) number of steps.
\end{abstract}

\keywords{discrete topology optimization, frame structures, shell structures, semidefinite programming, polynomial optimization, global optimality}

\section{Introduction}

Structural optimization is a research field developing concepts for the design of efficient structures. It was pioneered by \citet{Michell_1904}, who showed that minimum-weight truss structures under a~single load case are fully-stressed, and the optimal trajectories of their bars align with the principal stress directions. Hence, optimal designs can contain an infinite number of bars in general. This drawback, which hinders their manufacturability, was overcome in the work of \citet{Dorn1964} by introducing the ground structure approach, effectively discretizing the continuum into a~finite set of potential nodes and their interconnections of finite elements. Because the dimensionality of these potential elements is lower than that of the continuum and the presence and sizing of each element are investigated, this setting is referred to as discrete topology optimization.

\begin{figure*}[!t]
	\begin{subfigure}{0.25\linewidth}
		\begin{tikzpicture}
		\scaling{2}
		\point{a}{0.000000}{0.000000}
		\notation{1}{a}{\circled{$1$}}[below right=0mm]
		\point{b}{1.000000}{0.500000}
		\notation{1}{b}{\circled{$2$}}[above right=0mm]
		\point{c}{0.000000}{1.000000}
		\notation{1}{c}{\circled{$3$}}[above right=0mm]
		\beam{2}{a}{b}
		\notation{4}{a}{b}[$1$]
		\beam{2}{b}{c}
		\notation{4}{b}{c}[$2$]
		\support{3}{a}[270]
		\support{3}{c}[270]
		\point{d1}{0.000000}{-0.250000}
		\point{d2}{1.000000}{-0.250000}
		\dimensioning{1}{d1}{d2}{-1.000000}[$1.0$]
		\point{e1}{1.250000}{0.000000}
		\point{e2}{1.250000}{0.500000}
		\dimensioning{2}{e1}{e2}{-0.75}[$0.5$]
		\point{e3}{1.250000}{1.000000}
		\dimensioning{2}{e2}{e3}{-0.75}[$0.5$]
		\load{1}{b}[0][1.0][0.0]
		\notation{1}{b}{$1.6$}[right=9mm]
		\load{1}{b}[90][-0.625][0.0]
		\notation{1}{b}{$1.0$}[below=5.5mm]
		\end{tikzpicture}
		\caption{}
	\end{subfigure}%
	\hfill\begin{subfigure}{0.22\linewidth}
		\includegraphics[width=\linewidth]{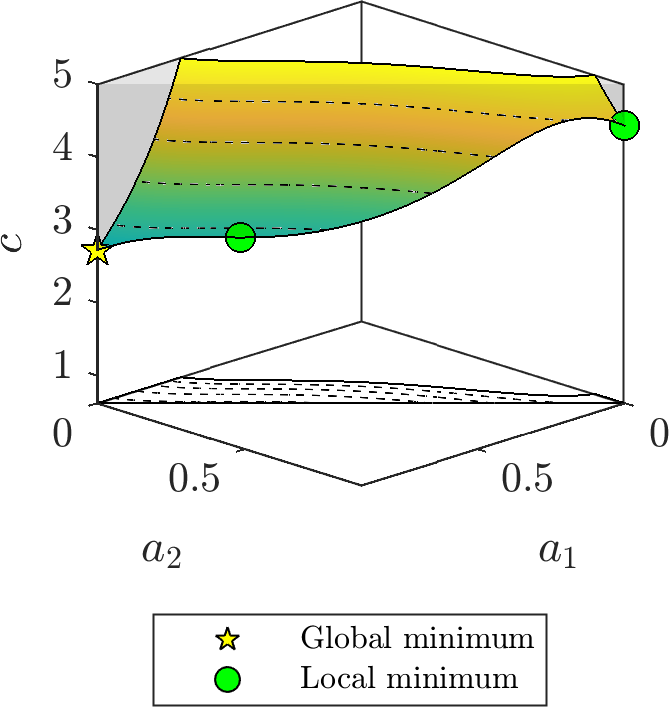}
		\caption{}
	\end{subfigure}%
	\hfill\begin{subfigure}{0.22\linewidth}
		\includegraphics[width=\linewidth]{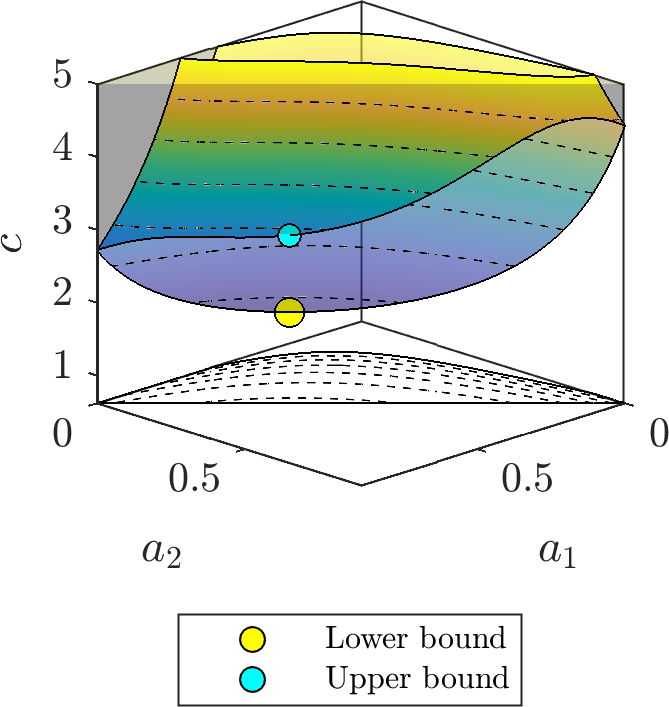}
		\caption{}
	\end{subfigure}%
	\hfill\begin{subfigure}{0.22\linewidth}
		\includegraphics[width=\linewidth]{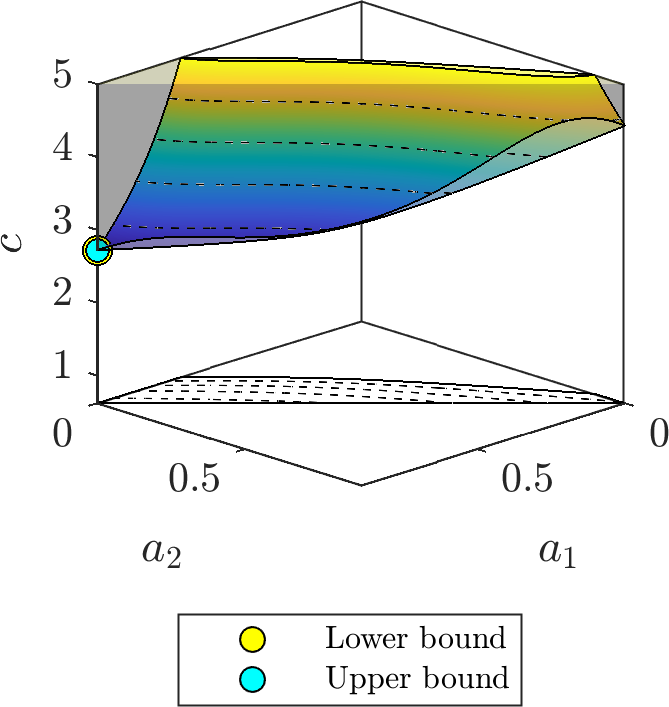}
		\caption{}
	\end{subfigure}
	\caption{(a) Boundary conditions for the motivating problem, (b) the sublevel set $c \le 5$ of its feasible design space, and its (c) first and (d) second convex outer approximations constructed by the moment-sum-of-squares approach. Variables $a_1$ and $a_2$ stand for the cross-section areas of the two elements and $c$ denotes the corresponding compliance (assuming moments of inertia $I_i = a_i^2$, $i \in \{1,2\}$).}
	\label{fig:motivation}
\end{figure*}

A tremendous progress has been made for the case of trusses. While \citet{Dorn1964} developed a linear programming formulation for the single-load-case plastic design, \citet{BENDSOE_1991} and \citet{Achtziger_1992} introduced a convex displacement-based elastic-design quadratic program that additionally allowed for multiple load cases. Its dual, which incorporates the cross-section variables explicitly, was shown by \citet{Lobo_1998} and \citet{Ben_Tal_2001} to be a second-order conic program. This latter formulation handles multiple load cases with stress constraints efficiently \citep{Tyburec_2020}. Convexity prevails even for fundamental free-vibration constraints, in which case semidefinite programming can be used \citep{Achtziger_2008, Tyburec_2019}.

A completely different situation holds for bending-resistant structures with continuous design variables. To the best of our knowledge, no convex formulation has been established so far, and, therefore, solely local optimization techniques have been used. Among these, \citet{Saka_1980} developed a sequential linear programming approach to design minimum-weight framed structures, and \citet{Wang_2006} improved over its solution efficiency by using sequential quadratic programming instead. Another, relaxation-based sequential semidefinite programming method was proposed by \citet{Yamada_2015} to deal with vibration problems. Nonlinear programming \citep{Fredricson_2003}, Optimality Criteria (OC) \citep{Khan_1984,Chan_1995}, the Method of Moving Assymptotes (MMA) \citep{Svanberg1987,Fredricson_2005}, and meta-heuristics \citep{An_2017} are other commonly used alternatives.

Except for our earlier conference paper \citep{Tyburec_2020b}, which is a very preliminary version of this manu\-script, the only global approach that can be found in the literature considers a discrete setting of the problem, in which case the cross-sections are selected from a predefined catalog, allowing to use the branch-and-bound method \citep{Kanno_2016}.

\subsection{Motivation}\label{sec:motivation}

This contribution investigates a~conceptual design of bending-resistant structures with continuous design variables. As follows from the previous survey, only local optimization approaches have been adopted so far to tackle the frame/shell structure optimization problem. It is, therefore, not surprising that they fail to converge to globally optimal solutions even for toy problems, such as the one shown in Fig.~\ref{fig:motivation}a.

Consider a linear-elastic material with the dimensionless Young modulus $E=1$, available material volume $\overline{V}=1$, element cross-sections parameterized by their area $a_i$ and the moment of inertia $I_i(a_i) = a_i^2$, which corresponds to rectangular cross-sections with the height-to-width ratio of $12$, for example. Our goal is to find the cross-section areas $a_i$ associated with the elements $i = \{1,2\}$ that induce minimum compliance $c$ within all non-negative $a_i$ satisfying the volume bound $\overline{V}$. Standard local optimization techniques such as OC, MMA, and \textsc{Matlab} inbuilt optimizer \texttt{fmincon} all converge\footnote{For OC and MMA, we adopted the commonly-used starting point of uniform mass distribution, i.e., $a_1=a_2=0.2\sqrt{5}$. For \texttt{fmincon}, the default starting point was used.} to the optimized compliance $c = 2.895$ and the corresponding areas $a_1 = 0.652$ and $a_2 = 0.242$. However, the globally optimal design possesses compliance $c^* = 2.719$, which requires $a_1^* = 0.4 \sqrt{5}$ and $a_2^* = 0$. 

This problem exhibits three local optima in particular, with the last one being $a_1 = 0$, $a_2 = 0.4\sqrt{5}$ and $c=4.429$, see Fig.~\ref{fig:motivation}b. Thus, the global optimum may be reached by examining a~few different starting points in the optimizers. Such a~procedure, however, cannot assure global optimality and cannot neither assess quality of the optimized designs with respect to the global optimum. When the number of structural elements increases, design-dependent loads are considered, and higher-order polynomials for the moments of inertia are used, finding globally-optimal minimum-compliance designs becomes extremely challenging.

\subsection{Aims and novelty}

We address this problem by exploiting the simple fact that the constraints can be formulated as polynomial functions, hence forming a~(basic) semi-algebraic feasible set. Using a polynomial objective function in addition, the moment-sum-of-squares (Lasserre) hierarchy of convex outer approximations (relaxations) can be used to solve and extract all globally-optimal solutions. These relaxations provide a~non-decreasing sequence of lower bounds, eventually attaining the optimal compliance, Figs \ref{fig:motivation}c-\ref{fig:motivation}d. In addition, we show how to correct such obtained lower-bound designs, and hence generate feasible upper bounds. A comparison of these bounds then assesses the design quality, and their equality establishes a simple sufficient condition of global optimality. We further show that when a unique global optimum exists, we can expect the occurrence of a~bound equality. Fortunately, this situation occurs quite often when the design domain lacks structural and boundary conditions symmetries.

This paper is organized as follows. In Section \ref{sec:mom-sos}, we introduce polynomial optimization and the moment-sum-of-squares hierarchy. Section \ref{sec:nsdp} develops a~non-linear semidefinite programming formulation for topology optimization of frame structures, which we modify subsequently for the moment-sum-of-squares hierarchy in Section \ref{sec:eff}. Section \ref{sec:ub} reveals how to correct the lower-bound designs generated by the hierarchy to obtain feasible upper bounds, and Section \ref{sec:opt} introduces the sufficient condition of global $\varepsilon$-optimality as well as a~zero optimality gap for the case of a unique global optimum. Section~\ref{sec:shells} outlines the required changes in notation to allow for thickness optimization of shell structures. These results are then illustrated on five selected optimization problems in Section~\ref{sec:examples}. We finally summarize our contributions in Section~\ref{sec:conclusion}.

\section{Moment-sum-of-squares hierarchy}\label{sec:mom-sos}

In this section, we briefly outline the moment-sum-of-squares hierarchy when applied to solution of problems with polynomial matrix inequalities. For more information, we refer the reader to an expository text \citep{Henrion_2006} and to the excellent books \citep{Anjos2012,Lasserre_2015}.

Suppose we aim to solve an optimization problem of the form
\begin{subequations}\label{eq:po}
	\begin{alignat}{2}
	f^* = \inf_{\mathbf{x}}\, & f(\mathbf{x})\\
	\mathrm{s.t.}\; & \mathbf{G} (\mathbf{x}) &{}\succeq{} 0,\label{eq:po_pmi}
	\end{alignat}
\end{subequations}
where $f (\mathbf{x}): \mathbb{R}^n \rightarrow \mathbb{R}$ is a real polynomial function and $\mathbf{G} (\mathbf{x}): \mathbb{R}^n \rightarrow \mathbb{S}^{m}$ is a~real polynomial mapping, so that $\forall i, j: G_{i,j}(\mathbf{x}) = G_{j,i}(\mathbf{x})$ are real polynomial functions of $\mathbf{x}$. The degree of these polynomials is less than or equal to $k \in \mathbb{N}$. The symbol $\mathbb{S}^{m}$ denotes the space of real symmetric square matrices of size $m$, and ``$\succeq$'' establishes an ordering of fundamental eigenvalues, i.e., $\mathbf{G}(\mathbf{x})$ in \eqref{eq:po_pmi} is positive semidefinite. Hence, we call \eqref{eq:po_pmi} a polynomial matrix inequality (PMI) in what follows and denote its feasible set by $\mathcal{K}(\mathbf{G})$.

Clearly, the nonlinear semidefinite program \eqref{eq:po} covers a~variety of convex optimization problems as special cases, including linear and quadratic programming or linear semidefinite programming, see, e.g., \citep[Section 4.2]{Ben_Tal_2001}. Although these instances can be solved in a polynomial time, hence efficiently, \eqref{eq:po} exhibits $\mathcal{NP}$-hardness in general. This can be seen, for example, by a~reduction from binary programming, in which case the main diagonal of $\mathbf{G}(\mathbf{x})$ contains both $x_i^2-x_i$ and $x_i - x_i^2$ terms for all $i \in \{1\dots n\}$.

Despite the fact that the admissible set of $\mathbf{x}$ is generally non-convex, \eqref{eq:po}~admits an equivalent reformulation to a~convex optimization problem over a finite-dimensional cone of polynomials $C_k(\mathcal{K}(\mathbf{G}))$ of degree at most $k$ which are non-negative on $\mathcal{K}(\mathbf{G})$, i.e.,
\begin{equation}\label{eq:non-neg_cone}
\begin{aligned}
f^* &= \sup_{\lambda} \{\lambda: f(\mathbf{x})-\lambda \ge 0, \forall \mathbf{x} \in \mathcal{K}(\mathbf{G})\}\\ 
&= \sup_{\lambda}\, (f-\lambda),  \mathrm{s.t.\;} (f-\lambda) \in C_k(\mathcal{K}(\mathbf{G})).
\end{aligned}
\end{equation}
Unfortunately, it is not known how to handle $C_k(\mathcal{K}(\mathbf{G}))$ simply and in a tractable way.

To introduce an approach that allows to solve \eqref{eq:non-neg_cone}, we first adopt the following notation. Let $\mathbf{x} \mapsto \mathbf{b}_k (\mathbf{x})$ be the polynomial space basis of polynomials of degree at most $k$,
\begin{equation}
\begin{multlined}
\mathbf{b}_k(\mathbf{x}) = \left(1\;\; x_1\;\; x_2\;\; \dots\;\; x_n\;\; x_1^2\;\; x_1 x_2\;\; \dots\;\; x_1 x_n\right.\\ \left. x_2^2\;\; x_2 x_3\;\; \dots\;\; x_n^2\;\; \dots\;\; x_2^3\;\; \dots\;\; x_n^k
\right),
\end{multlined}
\end{equation}
Then, any polynomial $p(\mathbf{x})$ of degree at most $k$ can be written as
\begin{equation}
p (\mathbf{x}) = \mathbf{q}^\mathrm{T} \mathbf{b}_k (\mathbf{x}),
\end{equation}
in which $\mathbf{q}$ denotes a vector of coefficients associated with the basis $\mathbf{b}_k(\mathbf{x})$.

\begin{definition}\label{def:sos}
	The polynomial matrix $\bm{\Sigma}(\mathbf{x}): \mathbb{R}^n \rightarrow \mathbb{S}^m$ is a~(matrix) sum-of-squares (SOS) if there exists a polynomial matrix $\mathbf{H}(\mathbf{x}): \mathbb{R}^n \rightarrow \mathbb{R}^{m \times o}$, $o \in \mathbb{N}$, such that
	\begin{equation}
	\bm{\Sigma} (\mathbf{x})= \mathbf{H}(\mathbf{x})\left[\mathbf{H}(\mathbf{x})\right]^\mathrm{T}, \;\forall \mathbf{x} \in \mathbb{R}^n.
	\end{equation}
\end{definition}

Let $\langle \cdot, \cdot \rangle$ denote the standard inner product on matrices, $\bm{\alpha} \in \mathbb{N}^{\lvert \mathbf{b}_k(\mathbf{x}) \rvert}$ with $\mathbf{1}^\mathrm{T}\bm{\alpha} \le k$ be the multi-index associated with the basis $\mathbf{b}_k (\mathbf{x})$, and let $\mathbf{y} \in \mathbb{R}^{\lvert \mathbf{b}_k(\mathbf{x}) \rvert }$ be the moments (of probability measures supported on $\mathcal{K}(\mathbf{G}(\mathbf{x}))$) indexed in $\mathbf{b}_k (\mathbf{x})$. In what follows, we adopt the following notation for the elements of $\mathbf{y}$:
\begin{equation}
y_{\bm{\alpha}} = y_{\prod_{i=1}^n x_i^{\alpha_i}} \text{ is associated with } \prod_{i=1}^{n}x_i^{\alpha_i}.
\end{equation}
For example, when $\bm{\alpha} = (0\;\;0\;\;1\;\;2)^\mathrm{T}$, $y_{0012} = y_{x_3^1 x_4^2}$ corresponds to the polynomial $x_3^1 x_4^2 \in \mathbf{b}_k(\mathbf{x})$, where $k \ge 3$.
\begin{assumption}\label{ass:comp}\citep{Henrion_2006}
	Assume that there exist SOS polynomials $\mathbf{x} \mapsto p_0(\mathbf{x)}$ and $\mathbf{x} \mapsto \mathbf{R}(\mathbf{x})$ such that the superlevel set $\{\mathbf{x} \in \mathbb{R}^n : p_0(\mathbf{x}) + \langle \mathbf{R}(\mathbf{x}),\mathbf{G}(\mathbf{x})\rangle \geq 0\}$ is compact.
\end{assumption}
Note that Assumption \ref{ass:comp} is an algebraic certificate of compactness of the feasible set in problem \eqref{eq:po}. When Assumption \ref{ass:comp} holds, then, the dual of \eqref{eq:non-neg_cone} can be written equivalently as an infinite-dimensional generalized problem of moments, which is equipped with a finite-dimensional truncation:
\begin{subequations}\label{eq:truncmoment}
	\begin{alignat}{2}
	f^{(r)} = \min_{\mathbf{y}}\, && \mathbf{q}^\mathrm{T} \mathbf{y}\qquad\qquad\;\;\\
	\mathrm{s.t.}\;&& y_0 &= 1,\\
	&& \mathbf{M}_k (\mathbf{y}) &\succeq 0,\\
	&& \mathbf{M}_{k-d} (\mathbf{G}(\mathbf{x}) \mathbf{y}) &\succeq 0,
	\end{alignat}
\end{subequations}
in which $2r \ge k$, $r$ is the relaxation degree, and $d$ stands for the maximum degree of polynomials in $\mathbf{G}(\mathbf{x})$. In addition, $\mathbf{M}_k(\mathbf{y})$ and $\mathbf{M}_{k-d} (\mathbf{G}(\mathbf{x})\mathbf{y})$ are the truncated moment and localizing matrices associated with $\mathbf{y}$ and $\mathbf{G}(\mathbf{x})$. For a precise definition of $\mathbf{M}_k(\mathbf{y})$ and $\mathbf{M}_{k-d} (\mathbf{G}(\mathbf{x})\mathbf{y})$, we refer the reader to \citep{Henrion_2006}. These moment matrices are linear in $\mathbf{y}$, hence \eqref{eq:truncmoment} is a linear semidefinite program. Because \eqref{eq:truncmoment} is a~finite-dimensional convex relaxation of \eqref{eq:po}, we have $f^{(r)} \le f^*$, $\forall r \in \mathbb{N}$. Moreover, these relaxations are tighter with increasing $r$, making the sequence $\left(f^{(r)}\right)_{r}^{\infty}$ monotonically increasing and converging towards $f^*$.

\begin{theorem}\citep[Theorem 2.2]{Henrion_2006}\label{th:convergence}
	Let Assumption~\ref{ass:comp} be satisfied. Then, $f^{(r)} \uparrow f^*$ as $r \rightarrow \infty$ in \eqref{eq:truncmoment}.
\end{theorem}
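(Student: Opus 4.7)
The plan is to establish three facts: (i) $f^{(r)} \le f^*$ for every relaxation order $r$, (ii) the sequence $\left(f^{(r)}\right)_r$ is non-decreasing, and (iii) $\liminf_{r\to\infty} f^{(r)} \ge f^*$. Together, these yield the claimed monotone convergence $f^{(r)} \uparrow f^*$.

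For (i), I would construct a feasible point of the truncated moment relaxation \eqref{eq:truncmoment} from any $\mathbf{x} \in \mathcal{K}(\mathbf{G})$ by taking the moments of the Dirac measure $\delta_{\mathbf{x}}$, namely $y_{\bm{\alpha}} = \prod_i x_i^{\alpha_i}$. These satisfy $y_0=1$ by construction, while both $\mathbf{M}_k(\mathbf{y})$ and $\mathbf{M}_{k-d}(\mathbf{G}(\mathbf{x})\mathbf{y})$ reduce to rank-one positive semidefinite matrices because $\mathbf{G}(\mathbf{x}) \succeq 0$. Choosing $\mathbf{x}$ to be $\varepsilon$-optimal for \eqref{eq:po} thus delivers $f^{(r)} \le f^* + \varepsilon$, and (i) follows by letting $\varepsilon \to 0$. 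Claim (ii) is straightforward: any moment vector feasible at relaxation $r+1$ restricts to one feasible at level $r$ after dropping entries indexed by monomials of degree larger than $2r$, since the relevant principal submatrices inherit positive semidefiniteness.

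The main obstacle is (iii). Here I would invoke the matrix-valued Putinar Positivstellensatz (Scherer--Hol, Kojima): Assumption~\ref{ass:comp} is precisely the Archimedean-type condition ensuring that every polynomial $p(\mathbf{x})$ strictly positive on $\mathcal{K}(\mathbf{G})$ admits a representation
\begin{equation*}
p(\mathbf{x}) = \sigma_0(\mathbf{x}) + \langle \bm{\Sigma}(\mathbf{x}), \mathbf{G}(\mathbf{x}) \rangle,
\end{equation*}
where $\sigma_0$ is an SOS polynomial and $\bm{\Sigma}$ is an SOS polynomial matrix in the sense of Definition~\ref{def:sos}. Applying this to $p(\mathbf{x}) = f(\mathbf{x}) - (f^* - \varepsilon)$ for arbitrary $\varepsilon > 0$ yields certificates $\sigma_0,\bm{\Sigma}$ of finite total degree; consequently there exists $r(\varepsilon) \in \mathbb{N}$ such that $f^* - \varepsilon$ is feasible for the sum-of-squares dual of \eqref{eq:truncmoment} whenever $r \ge r(\varepsilon)$. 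Weak duality between the moment and SOS hierarchies then enforces $f^{(r)} \ge f^* - \varepsilon$ for all such $r$, and sending $\varepsilon \to 0$ closes the gap.

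The hardest ingredient is the matrix-valued Positivstellensatz, which upgrades Putinar's scalar theorem to the PMI setting; its proof ultimately rests on an operator-theoretic extension of the Archimedean argument. Once that existence result is granted, the remaining work is essentially bookkeeping: matching the degrees of the SOS certificates to the relaxation order $r$, verifying that the finite truncation does not disrupt weak duality, and ensuring that $\mathcal{K}(\mathbf{G})$ (which is compact by Assumption~\ref{ass:comp}) actually contains a point attaining $f^*$ up to $\varepsilon$, so that the Dirac construction in (i) is available.
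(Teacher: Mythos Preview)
The paper does not provide its own proof of this theorem: it is stated as a citation of \citep[Theorem 2.2]{Henrion_2006} and used as a black-box result, so there is no in-paper argument to compare against. Your sketch is correct and in fact mirrors the strategy of the original reference---Dirac feasibility for the lower bound, truncation for monotonicity, and the matrix-valued Putinar/Scherer--Hol Positivstellensatz combined with weak duality for convergence from below. One minor imprecision: for the Dirac measure $\delta_{\mathbf{x}}$, the localizing matrix $\mathbf{M}_{k-d}(\mathbf{G}(\mathbf{x})\mathbf{y})$ is not rank-one in general but has the Kronecker-type structure $\mathbf{G}(\mathbf{x}) \otimes \mathbf{b}_{k-d}(\mathbf{x})\mathbf{b}_{k-d}(\mathbf{x})^{\mathrm{T}}$, which is positive semidefinite whenever $\mathbf{G}(\mathbf{x}) \succeq 0$; this does not affect the argument.
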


Moreover, all globally optimal solutions of \eqref{eq:po} can be extracted from \eqref{eq:truncmoment} based on the flat extension theorem of \citet{Curto_1996}. Indeed, finite convergence occurs when
\begin{equation}\label{eq:rank}
s = \mathrm{Rank}(\mathbf{M}_k(\mathbf{y}^*)) = \mathrm{Rank}(\mathbf{M}_{k-d}(\mathbf{y}^*)),
\end{equation}
where $\mathbf{y}^*$ denotes the vector of optimal moments, and $s$ stands for the minimum number of distinct global minimizers \citep[Theorem 2.4]{Henrion_2006}.

\paragraph{Example} We illustrate the process of building the (Lasserre) moment-sum-of-squares hierarchy on an elementary example.
\begin{subequations}
\begin{alignat}{2}
\min_{a,c}\; && c\qquad\;\;\,\\
\mathrm{s.t.}\; && \begin{pmatrix}
c & \overline{f} \\
\overline{f} & a^2
\end{pmatrix} &\succeq 0,\\
&& \overline{V} - a &\ge 0,\\
&& a &\ge 0.
\end{alignat}
\end{subequations}
In the first relaxation, $\mathbf{y} = \begin{pmatrix} y_{00} & y_{10} & y_{01} & y_{20} & y_{11} & y_{02}\end{pmatrix}^\mathrm{T}$ is indexed in the polynomial space basis $\mathbf{b}_1 (a,c) = \begin{pmatrix}1 & c & a & c^2 & c a & a^2\end{pmatrix}^\mathrm{T}$. Then, the associated relaxation reads
\begin{subequations}
	\begin{alignat}{2}
	\min_{\mathbf{y}} && y_{10}\qquad\qquad\;\,\\
	\mathrm{s.t.} && \begin{pmatrix}
	y_{10} & \overline{f} \\
	\overline{f} & y_{02}
	\end{pmatrix} &\succeq 0,\\
	&& \overline{V} - y_{01} &\ge 0,\\
	&& y_{01} &\ge 0,\\
	&& y_{00} &= 1,\\
	&& \begin{pmatrix}
	y_{00} & y_{10} & y_{01}\\
	y_{10} & y_{20} & y_{11}\\
	y_{01} & y_{11} & y_{02}
	\end{pmatrix} &\succeq 0.
	\end{alignat}
\end{subequations}
For $r=2$, we have $\mathbf{y} = (y_{00}\;\,\allowbreak y_{10}\;\,\allowbreak y_{01}\;\,\allowbreak y_{20}\;\,\allowbreak y_{11}\;\,\allowbreak y_{02}\;\,\allowbreak y_{30}\;\,\allowbreak y_{21}\allowbreak y_{12}\;\,\allowbreak y_{03}\;\,\allowbreak y_{40}\;\,\allowbreak y_{31}\;\,\allowbreak y_{22}\;\,\allowbreak y_{13}\;\,\allowbreak y_{04})^\mathrm{T}$ indexed in $\mathbf{b}_2(a,c) = (1\allowbreak c\;\,\allowbreak a\;\,\allowbreak c^2\;\,\allowbreak c a\;\,\allowbreak a^2\;\,\allowbreak c^3\;\,\allowbreak c^2 a\;\,\allowbreak c a^2\;\,\allowbreak a^3\;\,\allowbreak c^4\;\,\allowbreak c^3 a\;\,\allowbreak c^2 a^2\;\,\allowbreak c a^3\;\,\allowbreak a^4)^\mathrm{T}$. The corresponding relaxation  is written as
\begin{subequations}
	\begin{alignat}{2}
	\min_{\mathbf{y}}\; && y_{10}\qquad\qquad\qquad\qquad\qquad\qquad\qquad\;\;\\
	\mathrm{s.t.}\; && \begin{pmatrix}
	y_{10} & \overline{f} & y_{20} & \overline{f}y_{10} & y_{11} & \overline{f} y_{01} \\
	\overline{f} & y_{02} & \overline{f}y_{10} & y_{12} & \overline{f} y_{01} & y_{03}\\
	y_{20} & \overline{f}y_{10} & y_{30} & \overline{f}y_{20} & y_{21} & \overline{f} y_{11} \\
	\overline{f}y_{10} & y_{12} & \overline{f}y_{20} & y_{22} & \overline{f} y_{11} & y_{13}\\
	y_{11} & \overline{f}y_{01} & y_{21} & \overline{f}y_{11} & y_{12} & \overline{f} y_{02} \\
	\overline{f}y_{01} & y_{03} & \overline{f}y_{11} & y_{13} & \overline{f} y_{02} & y_{04}
	\end{pmatrix} &\succeq 0,\hspace{-4mm}\\
	&& \begin{pmatrix}
	\overline{V} - y_{01} & \overline{V}y_{10} - y_{11} & \overline{V}y_{01} - y_{02}\\
	\overline{V}y_{10} - y_{11} & \overline{V}y_{20} - y_{21} & \overline{V}y_{11} - y_{12}\\
	\overline{V}y_{01} - y_{02} & \overline{V}y_{11} - y_{12} & \overline{V}y_{02} - y_{03}
	\end{pmatrix} &\succeq 0,\hspace{-4mm}\\
	&& \begin{pmatrix}
	y_{01} & y_{11} & y_{02}\\
	y_{11} & y_{21} & y_{12}\\
	y_{02} & y_{12} & y_{03}
	\end{pmatrix} &\succeq 0,\hspace{-4mm}\\
	&& y_{00} &= 1,\hspace{-4mm}\\
	&& \begin{pmatrix}
	y_{00} & y_{10} & y_{01} & y_{20} & y_{11} & y_{02}\\
	y_{10} & y_{20} & y_{11} & y_{30} & y_{21} & y_{12}\\
	y_{01} & y_{11} & y_{02} & y_{21} & y_{12} & y_{03}\\
	y_{20} & y_{30} & y_{21} & y_{40} & y_{31} & y_{22}\\
	y_{11} & y_{21} & y_{12} & y_{31} & y_{22} & y_{13}\\
	y_{02} & y_{12} & y_{03} & y_{22} & y_{13} & y_{04}
	\end{pmatrix} &\succeq 0.\hspace{-4mm}
	\end{alignat}
\end{subequations}
When solved, this relaxation allows for extracting the global solution of $a^*=\overline{V}$ and $c^*=\overline{f}^2/\overline{V}^2$.

\section{Methodology}

Topology optimization of discrete structures provides a natural application for the ground structure approach \citep{Dorn1964}, a~discretized design domain composed of a~fixed set of $n_\mathrm{n} \in \mathbb{N}$ nodes and their subsets of admissible $n_\mathrm{e} \in \mathbb{N}$ finite elements. Here, we employ the simplest two-node Euler-Bernoulli frame elements that adopt linear shape functions to interpolate the longitudinal displacements and cubic shape functions to interpolate the lateral displacements and rotations. Another elements can be adopted though, see Section \ref{sec:examples_cantilever} for applications to the Timoshenko beam element and the MITC4 shell element.

Each of these finite elements (indexed with $i$) must be supplied with the non-negative cross-section area $a_i \in \mathbb{R}_{\ge 0}$ and the area moment of inertia $I_i \in \mathbb{R}_{\ge 0}$. These are to be found in the optimization process. In this contribution, we assume, for convenience, that the moment of inertia is a second- or third-order polynomial function of the cross-sections, 
\begin{equation}\label{eq:inertia}
I_i(a_i) = c_\mathrm{II} a_i^2 + c_\mathrm{III} a_i^3,
\end{equation}
with $c_\mathrm{II}, c_\mathrm{III} \in \mathbb{R}_{\ge 0}$ being fixed constants. When $I_i (a_i) = 0$ and $a_i = 0$, the finite element vanishes and does not contribute to the load transfer.

Different topology optimization formulations exist, accommodating specific needs of particular applications. Here, we consider the problem of searching the minimum-compliant design under multiple load cases \eqref{eq:original_compliance} while satisfying the linear-elastic equilibrium equation \eqref{eq:original_equilibrium} and limiting the material volume from above by $\overline{V} \in \mathbb{R}_{>0}$ \eqref{eq:original_volume}. Physical admissibility of the resulting designs is ensured by the non-negative cross-section areas \eqref{eq:original_areas}. Combination of these ingredients establishes the basic elastic-design formulation
\begin{subequations}\label{eq:original}
	\begin{alignat}{2}
	&\min_{\mathbf{a}, \mathbf{u}_1,\dots,\mathbf{u}_{n_\mathrm{lc}}}\, & \sum_{j=1}^{n_\mathrm{lc}} \omega_j \mathbf{f}_j(\mathbf{a})^\mathrm{T} \mathbf{u}_j\;\;\label{eq:original_compliance}\\
	&\quad\;\;\mathrm{s.t.} & \mathbf{K}_j(\mathbf{a}) \mathbf{u}_j - \mathbf{f}_j(\mathbf{a}) &{}={}\mathbf{0},\;\forall j \in \{1\dots n_\mathrm{lc}\},\label{eq:original_equilibrium}\hspace{-6mm}\\
	&& \overline{V} - \bm{\ell}^\mathrm{T} \mathbf{a} &{}\ge{}0,\label{eq:original_volume}\\
	&& \mathbf{a} &{}\ge{}\mathbf{0},\label{eq:original_areas}
	\end{alignat}
\end{subequations}
in which $\bm{\omega} \in \mathbb{R}_{>0}^{n_\mathrm{lc}}$ are positive weights associated with $n_\mathrm{lc}$ load cases, and $\bm{\ell} \in \mathbb{R}_{\ge 0}^{n_\mathrm{e}}$ stands for the element lengths column vector. Further, $\mathbf{f}_j (\mathbf{a}) \in \mathbb{R}^{n_{\mathrm{dof},j}}$ and $\mathbf{u}_j \in \mathbb{R}^{n_{\mathrm{dof},j}}$ denote the force and displacement column vectors of the $j$-th load case, $n_{\mathrm{dof},j} \in \mathbb{N}$ stands for the associated number of degrees of freedom, and $\mathbf{K}_j(\mathbf{a}) \in \mathbb{R}^{n_{\mathrm{dof},j} \times n_{\mathrm{dof},j}}$ is the corresponding symmetric positive semidefinite stiffness matrix. For these stiffness matrices, we require $\forall \mathbf{a}>\mathbf{0}: \mathbf{K}_j(\mathbf{a}) \succ 0$ to exclude rigid body motions. Using the finite element method,
$\mathbf{K}_j(\mathbf{a})$ is assembled as
\begin{equation}\label{eq:assembly_K}
\mathbf{K}_j (\mathbf{a}) = \mathbf{K}_{j,0} + \sum_{i=1}^{n_\mathrm{e}} \left[ \mathbf{K}_{j,i}^{(1)} a_i + \mathbf{K}_{j,i}^{(2)} a_i^2 + \mathbf{K}_{j,i}^{(3)} a_i^3 \right],
\end{equation}
with $\mathbf{K}_{j,0} \succeq 0$ standing for a~design-independent stiffness (such as fixed structural elements), $\mathbf{K}_{j,i}^{(1)} \succeq 0$ being the unit-cross-section-area membrane stiffness of the $i$-th element in the $j$-th load case, and $\mathbf{K}_{j,i}^{(2)} \succeq 0$ with $\mathbf{K}_{j,i}^{(3)} \succeq 0$ are the corresponding bending stiffness counterparts associated with the unit cross-section area. The force column vector $\mathbf{f}_j$ is assumed in the form
\begin{equation}\label{eq:assembly_f}
\mathbf{f}_j (\mathbf{a}) = \mathbf{f}_{j,0} + \sum_{i=1}^{n_\mathrm{e}} \left[ \mathbf{f}_{j,i}^{(1)} a_i \right],
\end{equation}
where $\mathbf{f}_{j,0}$ stands for the design-independent load and $\mathbf{f}_{j,i}^{(1)}$ are the design-dependent loads such as self-weight. One can also add higher-order terms to \eqref{eq:assembly_f} to handle non-zero displacement boundary conditions.

The formulation \eqref{eq:original} is nonlinear and lacks convexity in general. The non-convexity comes not only from the polynomial entries in the stiffness matrix \eqref{eq:assembly_K}, but also from its possible singularity caused by zero cross-section areas \eqref{eq:original_areas}.

\subsection{Semidefinite programming formulation for topology optimization of frame structures}\label{sec:nsdp}

An approach to simplify \eqref{eq:original} relies on eliminating the displacement variables $\mathbf{u}_j$ from the problem formulation. In the nested approach, which is commonly used in topology optimization, the cross-section areas are bounded from below by a strictly positive $\varepsilon \in \mathbb{R}_{>0}$, allowing for a~computation of $\left[\mathbf{K}(\mathbf{a})\right]^{-1}$. Recall that $\mathbf{K}(\mathbf{a}) \succ 0$ for all $\mathbf{a}>\mathbf{0}$. The optimization procedure then traditionally adopts, e.g., the Method of Moving Asymptotes (MMA) \citep{Svanberg1987}, or the Optimality Criteria (OC) method \citep{Rozvany_1989}. Notice that $\varepsilon \rightarrow 0$ results in a~high condition number of $\mathbf{K}(\mathbf{a})$ and that larger values of $\varepsilon$ may impair quality of optimized designs, as sizing optimization is solved instead of the original topology optimization. In contrast, here, we eliminate $\mathbf{u}_j$ and allow the cross sections to truly attain zero.

Because $\left[\mathbf{K}(\mathbf{a})\right]^{-1}$ may not exist in our case, we rely on the Moore-Penrose pseudo-inverse $\left[\mathbf{K}(\mathbf{a})\right]^\dagger$ instead. Its role in enforcing the equilibrium conditions is clarified in the next lemma.
\begin{lemma}\label{prop:pinv}
	Consider the equation $\mathbf{K}_j(\mathbf{a}) \mathbf{u}_j = \mathbf{f}_j(\mathbf{a}) - \mathbf{r}_j$ with $\mathbf{u}_j = \mathbf{K}_j(\mathbf{a})^{\dagger} \mathbf{f}_j(\mathbf{a})$ and a residual vector $\mathbf{r}_j \in \mathbb{R}^{n_\mathrm{dof}}$. Then, $\mathbf{r}_j=\mathbf{0}$ if and only if $\mathbf{f}_j(\mathbf{a}) \in \mathrm{Im}(\mathbf{K}_j (\mathbf{a}))$.
\end{lemma}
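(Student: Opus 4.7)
The plan is to use the defining projection properties of the Moore-Penrose pseudo-inverse applied to the symmetric positive semidefinite stiffness matrix. Recall that for any matrix $\mathbf{K}$, the product $\mathbf{K}\mathbf{K}^\dagger$ is the orthogonal projector onto $\mathrm{Im}(\mathbf{K})$; when $\mathbf{K}$ is symmetric (as is the case for $\mathbf{K}_j(\mathbf{a})$), we additionally have the orthogonal decomposition $\mathbb{R}^{n_{\mathrm{dof},j}} = \mathrm{Im}(\mathbf{K}_j(\mathbf{a})) \oplus \mathrm{Ker}(\mathbf{K}_j(\mathbf{a}))$, so that $\mathbf{I} - \mathbf{K}_j(\mathbf{a})\mathbf{K}_j(\mathbf{a})^\dagger$ is the orthogonal projector onto $\mathrm{Ker}(\mathbf{K}_j(\mathbf{a}))$. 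These are standard facts I would either cite or briefly justify via the eigendecomposition of $\mathbf{K}_j(\mathbf{a})$.

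First I would substitute the proposed $\mathbf{u}_j = \mathbf{K}_j(\mathbf{a})^\dagger \mathbf{f}_j(\mathbf{a})$ into the perturbed equilibrium equation to obtain an explicit expression for the residual,
\begin{equation*}
\mathbf{r}_j \;=\; \mathbf{f}_j(\mathbf{a}) - \mathbf{K}_j(\mathbf{a})\,\mathbf{u}_j \;=\; \bigl(\mathbf{I} - \mathbf{K}_j(\mathbf{a})\,\mathbf{K}_j(\mathbf{a})^\dagger\bigr)\mathbf{f}_j(\mathbf{a}).
\end{equation*}
By the projector identification above, this residual is precisely the component of $\mathbf{f}_j(\mathbf{a})$ lying in $\mathrm{Ker}(\mathbf{K}_j(\mathbf{a})) = \mathrm{Im}(\mathbf{K}_j(\mathbf{a}))^{\perp}$.

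The two implications then follow immediately. If $\mathbf{f}_j(\mathbf{a}) \in \mathrm{Im}(\mathbf{K}_j(\mathbf{a}))$, then its projection onto the orthogonal complement vanishes, so $\mathbf{r}_j = \mathbf{0}$. Conversely, if $\mathbf{r}_j = \mathbf{0}$, then $\mathbf{f}_j(\mathbf{a}) = \mathbf{K}_j(\mathbf{a})\,\mathbf{K}_j(\mathbf{a})^\dagger \mathbf{f}_j(\mathbf{a})$, which exhibits $\mathbf{f}_j(\mathbf{a})$ as an element of $\mathrm{Im}(\mathbf{K}_j(\mathbf{a}))$ since it equals $\mathbf{K}_j(\mathbf{a})$ applied to the vector $\mathbf{K}_j(\mathbf{a})^\dagger \mathbf{f}_j(\mathbf{a})$.

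There is no real obstacle here: the lemma is a direct consequence of the projector properties of $\mathbf{K}\mathbf{K}^\dagger$ for symmetric matrices. The only care needed is to explicitly invoke symmetry of $\mathbf{K}_j(\mathbf{a})$ (guaranteed by the assembly \eqref{eq:assembly_K} from symmetric element contributions) so that the range and kernel are orthogonal complements; without symmetry one would only get $\mathrm{Im}(\mathbf{K})^\perp = \mathrm{Ker}(\mathbf{K}^\mathrm{T})$, which would weaken the clean characterization stated.
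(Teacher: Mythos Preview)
Your proof is correct and follows essentially the same approach as the paper: both arguments rest on the fact that $\mathbf{K}_j(\mathbf{a})\mathbf{K}_j(\mathbf{a})^\dagger$ is the orthogonal projector onto $\mathrm{Im}(\mathbf{K}_j(\mathbf{a}))$, so that the residual $\mathbf{r}_j = (\mathbf{I}-\mathbf{K}_j\mathbf{K}_j^\dagger)\mathbf{f}_j$ is exactly the kernel component of $\mathbf{f}_j(\mathbf{a})$. The paper phrases this via an explicit decomposition $\mathbf{f}_j = \mathbf{v}_j + \mathbf{w}_j$ into image and kernel parts, whereas you invoke the projector identity directly; your added remark on the role of symmetry is a welcome clarification not spelled out in the paper.
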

\begin{proof}
	Let $\mathbf{f}_j (\mathbf{a}) = \mathbf{v}_j (\mathbf{a}) + \mathbf{w}_j (\mathbf{a})$, in which $\mathbf{v}_j(\mathbf{a}) \in \mathrm{Im}\left(\mathbf{K}_j(\mathbf{a})\right)$, and $\mathbf{w}_j(\mathbf{a}) \in \mathrm{Ker}\left(\mathbf{K}_j(\mathbf{a})\right)$. Then, because $\mathbf{K}_j(\mathbf{a}) \mathbf{K}_j(\mathbf{a})^{\dagger}$ is an orthogonal projector onto the range of $\mathbf{K}_j(\mathbf{a})$, we obtain $\mathbf{K}_j(\mathbf{a}) \mathbf{K}_j(\mathbf{a})^{\dagger} \mathbf{f}_j(\mathbf{a}) = \mathbf{v}_j(\mathbf{a})$. Clearly, when $\mathbf{f}_j(\mathbf{a}) \in \mathrm{Im}(\mathbf{K}_j (\mathbf{a}))$, we have $\mathbf{v}_j(\mathbf{a}) = \mathbf{f}(\mathbf{a})$ with $\mathbf{w}_j(\mathbf{a}) = \mathbf{0}$, implying that $\mathbf{r}_j = \mathbf{0}$. For the case of $\mathbf{f}_j(\mathbf{a}) \notin \mathrm{Im}(\mathbf{K}_j (\mathbf{a}))$, $\mathbf{w}_j (\mathbf{a}) \neq \mathbf{0}$, showing that $\mathbf{r}_j = -\mathbf{w}_j (\mathbf{a})$.
\end{proof}
Lemma \ref{prop:pinv} allows us to eliminate the displacement variables and write the optimization problem \eqref{eq:original} only in terms of the cross-section areas $\mathbf{a}$ as
\begin{subequations}\label{eq:fpinv}
	\begin{alignat}{2}
	\min_{\mathbf{a}}\; && \sum_{j=1}^{n_\mathrm{lc}} \omega_j \mathbf{f}_j(\mathbf{a})^\mathrm{T} \left[\mathbf{K}_j(\mathbf{a}) \right]^{\dagger} \mathbf{f}_j (\mathbf{a})\label{eq:fpinv_compliance}\\
	\mathrm{s.t.}\; && \overline{V} - \bm{\ell}^\mathrm{T} \mathbf{a} &{}\ge{} 0,\label{eq:fpinv_volume}\\
	&& \mathbf{a} &{}\ge{} \mathbf{0},\label{eq:fpinv_areas}\\
	&& \forall j \in \{1\dots n_\mathrm{lc}\}: \quad \mathbf{f}_j(\mathbf{a}) &{}\in{} \mathrm{Im}(\mathbf{K}_j(\mathbf{a})).\label{eq:fpinv_image}
	\end{alignat}
\end{subequations}
Notice that \eqref{eq:fpinv_image} essentially eliminates the nonphysical setup when $\mathbf{K}_j(\mathbf{a}) = \left[\mathbf{K}_j (\mathbf{a})\right]^\dagger = \mathbf{0}$ produces zero compliance.

Because $\left[ \mathbf{K}_j(\mathbf{a})\right]^\dagger \succeq 0, \forall j \in \{1\dots n_\mathrm{lc}\}$, and $\bm{\omega}>\mathbf{0}$ by definition, \eqref{eq:fpinv_compliance} is bounded from below by $0$. Thus, we introduce slack variables $\mathbf{c} \in \mathbb{R}_{\ge0}^{n_\mathrm{lc}}$ comprising the to-be-minimized upper bounds on compliances for the load cases and rewrite \eqref{eq:fpinv} equivalently as
\begin{subequations}\label{eq:fpinvs}
	\begin{alignat}{2}
	\min_{\mathbf{a}, \mathbf{c}}\, && \bm{\omega}^\mathrm{T} \mathbf{c}\qquad\qquad\qquad\qquad\;\;\; \label{eq:fpinvs_compliance}\\
	\mathrm{s.t.}\, && c_j - \mathbf{f}_j(\mathbf{a})^\mathrm{T} \left[\mathbf{K}_j(\mathbf{a}) \right]^{\dagger} \mathbf{f}_j (\mathbf{a}) &{}\ge{}0, \forall j \in \{1\dots n_\mathrm{lc}\},\label{eq:fpinvs_equilibrium}\hspace{-12mm}\\
	&& \overline{V} - \bm{\ell}^\mathrm{T} \mathbf{a} &{}\ge{}0,\label{eq:fpinvs_volume}\\
	&& \mathbf{a} &{}\ge{}\mathbf{0},\label{eq:fpinvs_areas}\\
	&& \forall j \in \{1\dots n_\mathrm{lc}\}: \quad \mathbf{f}_j(\mathbf{a}) &{}\in{} \mathrm{Im}(\mathbf{K}_j(\mathbf{a})).\label{eq:fpinvs_image}
	\end{alignat}
\end{subequations}

To derive a nonlinear semidefinite programming formulation, let us now recall the generalized Schur complement lemma:
\begin{lemma}\label{lemma:schur}
	\citep[Theorem 16.1]{Gallier_2011} Let $\mathbf{A}$ and $\mathbf{C}$ be symmetric square matrices, $\mathbf{B}$ have appropriate dimensions, and $\mathbf{I}$ denote an identity matrix. Then, the following conditions are equivalent:
	\begin{enumerate}
		\item $\begin{pmatrix}
		\mathbf{A} &&  \mathbf{B}^\mathrm{T}\\
		\mathbf{B} && \mathbf{C}
		\end{pmatrix} \succeq 0$,
		\item $\mathbf{C} \succeq 0$, $\mathbf{A}- \mathbf{B}^\mathrm{T} \mathbf{C}^\dagger \mathbf{B} \succeq 0$, $(\mathbf{I}-\mathbf{C}\mathbf{C}^\dagger)\mathbf{B} = \mathbf{0}$.
	\end{enumerate}
\end{lemma}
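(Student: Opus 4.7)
The plan is to reduce both implications to a single block congruence of the form $\mathbf{M}=\mathbf{L}\mathbf{D}\mathbf{L}^{\mathrm{T}}$, mimicking the classical (invertible) Schur-complement proof, while handling the singular case through the Moore--Penrose identities $\mathbf{C}\mathbf{C}^{\dagger}\mathbf{C}=\mathbf{C}$ and $\mathbf{C}^{\dagger}\mathbf{C}\mathbf{C}^{\dagger}=\mathbf{C}^{\dagger}$, together with the fact that for symmetric $\mathbf{C}$ the product $\mathbf{C}\mathbf{C}^{\dagger}=\mathbf{C}^{\dagger}\mathbf{C}$ is the orthogonal projector onto $\mathrm{Im}(\mathbf{C})$ (a property already used in the proof of Lemma~\ref{prop:pinv}).

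For the direction $(2)\Rightarrow(1)$, I would exhibit the factorization
\[
\begin{pmatrix}\mathbf{A} & \mathbf{B}^{\mathrm{T}}\\ \mathbf{B} & \mathbf{C}\end{pmatrix}
=
\begin{pmatrix}\mathbf{I} & \mathbf{B}^{\mathrm{T}}\mathbf{C}^{\dagger}\\ \mathbf{0} & \mathbf{I}\end{pmatrix}
\begin{pmatrix}\mathbf{A}-\mathbf{B}^{\mathrm{T}}\mathbf{C}^{\dagger}\mathbf{B} & \mathbf{0}\\ \mathbf{0} & \mathbf{C}\end{pmatrix}
\begin{pmatrix}\mathbf{I} & \mathbf{0}\\ \mathbf{C}^{\dagger}\mathbf{B} & \mathbf{I}\end{pmatrix},
\]
and verify it by block multiplication. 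The $(1,1)$ block collapses to $\mathbf{A}$ via $\mathbf{C}^{\dagger}\mathbf{C}\mathbf{C}^{\dagger}=\mathbf{C}^{\dagger}$, while the off-diagonal blocks reduce to $\mathbf{B}$ and $\mathbf{B}^{\mathrm{T}}$ precisely because the hypothesis $(\mathbf{I}-\mathbf{C}\mathbf{C}^{\dagger})\mathbf{B}=\mathbf{0}$ supplies $\mathbf{C}\mathbf{C}^{\dagger}\mathbf{B}=\mathbf{B}$ (and, taking transposes and using symmetry of $\mathbf{C}^{\dagger}$, also $\mathbf{B}^{\mathrm{T}}\mathbf{C}^{\dagger}\mathbf{C}=\mathbf{B}^{\mathrm{T}}$). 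Since the outer factors are block-unipotent and therefore invertible, this is a congruence, so $\mathbf{M}\succeq 0$ is equivalent to positive semidefiniteness of the middle block-diagonal matrix, which is precisely the conjunction $\mathbf{C}\succeq 0$ and $\mathbf{A}-\mathbf{B}^{\mathrm{T}}\mathbf{C}^{\dagger}\mathbf{B}\succeq 0$.

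For $(1)\Rightarrow(2)$, positive semidefiniteness of $\mathbf{C}$ follows instantly as a principal block, so the real work is to deduce the range condition. Given any $v\in\mathrm{Ker}(\mathbf{C})$, testing $\mathbf{M}\succeq 0$ against the vector $(\mathbf{0}^{\mathrm{T}},v^{\mathrm{T}})^{\mathrm{T}}$ yields $v^{\mathrm{T}}\mathbf{C}v=0$; then the standard fact that $x^{\mathrm{T}}\mathbf{M}x=0$ forces $\mathbf{M}x=\mathbf{0}$ whenever $\mathbf{M}\succeq 0$ (via the symmetric square root $\mathbf{M}^{1/2}$) gives $\mathbf{B}^{\mathrm{T}}v=\mathbf{0}$. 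Hence $\mathrm{Ker}(\mathbf{C})\subseteq\mathrm{Ker}(\mathbf{B}^{\mathrm{T}})$, which by symmetry of $\mathbf{C}$ means $\mathrm{Im}(\mathbf{B})\subseteq\mathrm{Im}(\mathbf{C})$, i.e., $(\mathbf{I}-\mathbf{C}\mathbf{C}^{\dagger})\mathbf{B}=\mathbf{0}$. With this compatibility now established, the congruence from the previous step applies in reverse, so $\mathbf{M}\succeq 0$ forces the middle factor to be PSD and in particular $\mathbf{A}-\mathbf{B}^{\mathrm{T}}\mathbf{C}^{\dagger}\mathbf{B}\succeq 0$.

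The main obstacle is precisely the singular case: without the compatibility condition, the off-diagonal entries of the product $\mathbf{L}\mathbf{D}\mathbf{L}^{\mathrm{T}}$ become $\mathbf{C}\mathbf{C}^{\dagger}\mathbf{B}$ rather than $\mathbf{B}$, so the factorization collapses and neither implication goes through. Recognizing $(\mathbf{I}-\mathbf{C}\mathbf{C}^{\dagger})\mathbf{B}=\mathbf{0}$ as the correct replacement for invertibility of $\mathbf{C}$, and extracting it from $\mathbf{M}\succeq 0$ via the kernel argument above, is the only nonroutine step; the remainder is bookkeeping with the Moore--Penrose identities.
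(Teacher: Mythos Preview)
Your proof is correct and follows the standard congruence argument for the generalized Schur complement. Note, however, that the paper does not supply its own proof of this lemma: it is quoted verbatim as \citep[Theorem 16.1]{Gallier_2011} and used as a black box. The argument you give---the $\mathbf{L}\mathbf{D}\mathbf{L}^{\mathrm{T}}$ factorization combined with the kernel test $x^{\mathrm{T}}\mathbf{M}x=0\Rightarrow\mathbf{M}x=\mathbf{0}$ to extract the range condition---is precisely the proof that appears in Gallier's text, so there is nothing to contrast here beyond the fact that you have filled in what the paper delegates to a reference.
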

Since we already have $\mathbf{K}_j (\mathbf{a}) \succeq 0$ by definition and $c_j-\left[\mathbf{f}_j(\mathbf{a})\right]^\mathrm{T} \left[\mathbf{K}_j(\mathbf{a})\right]^\dagger \mathbf{f}_j (\mathbf{a}) \ge 0$ in \eqref{eq:fpinvs_equilibrium}, to use Lemma \ref{lemma:schur} it suffices to show that
\begin{equation}\label{eq:schur_eq}
(\mathbf{I}-\mathbf{K}_j(\mathbf{a})\left[\mathbf{K}_j(\mathbf{a})\right]^\dagger)\mathbf{f}_j (\mathbf{a}) = \mathbf{0}.
\end{equation}
\begin{proposition}\label{prop:image}
	The condition \eqref{eq:schur_eq} is equivalent to $\mathbf{f}_j(\mathbf{a}) \in \mathrm{Im}(\mathbf{K}_j(\mathbf{a}))$.
\end{proposition}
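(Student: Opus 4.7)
The plan is to exploit the orthogonal-projector property of the Moore--Penrose pseudoinverse that already underlies Lemma~\ref{prop:pinv}. Specifically, for any matrix $\mathbf{K}$, the product $\mathbf{K}\mathbf{K}^\dagger$ is the orthogonal projector onto $\mathrm{Im}(\mathbf{K})$. Since Lemma~\ref{prop:pinv} was proved using exactly this fact, the present proposition is essentially a reformulation of its argument, and I would keep the proof self-contained but short.

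First, I would invoke the symmetric positive semidefiniteness of $\mathbf{K}_j(\mathbf{a})$ to obtain the orthogonal decomposition
\begin{equation*}
\mathbb{R}^{n_{\mathrm{dof},j}} = \mathrm{Im}(\mathbf{K}_j(\mathbf{a})) \oplus \mathrm{Im}(\mathbf{K}_j(\mathbf{a}))^\perp,
\end{equation*}
and split the force vector as $\mathbf{f}_j(\mathbf{a}) = \mathbf{v}_j(\mathbf{a}) + \mathbf{w}_j(\mathbf{a})$ with $\mathbf{v}_j(\mathbf{a}) \in \mathrm{Im}(\mathbf{K}_j(\mathbf{a}))$ and $\mathbf{w}_j(\mathbf{a}) \in \mathrm{Im}(\mathbf{K}_j(\mathbf{a}))^\perp$, exactly as in the proof of Lemma~\ref{prop:pinv}. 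Applying the projector identity $\mathbf{K}_j(\mathbf{a})[\mathbf{K}_j(\mathbf{a})]^\dagger \mathbf{f}_j(\mathbf{a}) = \mathbf{v}_j(\mathbf{a})$ then yields
\begin{equation*}
\bigl(\mathbf{I} - \mathbf{K}_j(\mathbf{a})[\mathbf{K}_j(\mathbf{a})]^\dagger\bigr)\mathbf{f}_j(\mathbf{a}) = \mathbf{w}_j(\mathbf{a}).
\end{equation*}

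From this identity the equivalence falls out in one line. For the ``only if'' direction, condition \eqref{eq:schur_eq} gives $\mathbf{w}_j(\mathbf{a}) = \mathbf{0}$, hence $\mathbf{f}_j(\mathbf{a}) = \mathbf{v}_j(\mathbf{a}) \in \mathrm{Im}(\mathbf{K}_j(\mathbf{a}))$. Conversely, if $\mathbf{f}_j(\mathbf{a}) \in \mathrm{Im}(\mathbf{K}_j(\mathbf{a}))$, then by uniqueness of the orthogonal decomposition $\mathbf{w}_j(\mathbf{a}) = \mathbf{0}$, and \eqref{eq:schur_eq} holds.

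No step is a genuine obstacle; the entire argument is bookkeeping around the orthogonal-projector property of $\mathbf{K}\mathbf{K}^\dagger$. The only subtlety worth flagging is to justify $\mathrm{Im}(\mathbf{K}_j(\mathbf{a}))^\perp = \mathrm{Ker}(\mathbf{K}_j(\mathbf{a}))$, which is why I emphasize the symmetry of $\mathbf{K}_j(\mathbf{a})$ at the outset; without it one would have to distinguish range and co-range.
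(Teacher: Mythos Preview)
Your proof is correct and follows essentially the same approach as the paper, exploiting the orthogonal-projector property of $\mathbf{K}_j(\mathbf{a})[\mathbf{K}_j(\mathbf{a})]^\dagger$. The only difference is presentational: you derive $(\mathbf{I}-\mathbf{K}_j[\mathbf{K}_j]^\dagger)\mathbf{f}_j = \mathbf{w}_j$ once and read off both implications, whereas the paper handles each direction separately (using $\mathbf{K}\mathbf{K}^\dagger\mathbf{K}=\mathbf{K}$ for one and invoking Lemma~\ref{prop:pinv} for the other).
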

\begin{proof}
	First, consider $\mathbf{f}_j (\mathbf{a}) \in \mathrm{Im}(\mathbf{K}_j (\mathbf{a}))$. Then,  $\mathbf{f}_j (\mathbf{a}) = \mathbf{K}_j (\mathbf{a}) \mathbf{u}_j$ for some displacement vector $\mathbf{u}_j$. After inserting it into the left-hand-side of \eqref{eq:schur_eq}, we have
	\begin{equation}
	\left( \mathbf{K}_j(\mathbf{a}) - \mathbf{K}_j(\mathbf{a}) \left[\mathbf{K}_j(\mathbf{a})\right]^\dagger \mathbf{K}_j(\mathbf{a}) \right) \mathbf{u}_j = \mathbf{0},
	\end{equation}
	which holds for all such $\mathbf{u}_j$ as $\mathbf{K}_j(\mathbf{a}) \left[\mathbf{K}_j(\mathbf{a})\right]^\dagger \mathbf{K}_j(\mathbf{a}) = \mathbf{K}_j(\mathbf{a})$ by the definition of the Moore-Penrose pseudo-inverse \citep[Lemma 14.1]{Gallier_2011}.
	
	Otherwise, consider $\mathbf{f}_j (\mathbf{a}) \notin \mathrm{Im}(\mathbf{K}_j (\mathbf{a}))$ and let $\tilde{\mathbf{u}} = \left[\mathbf{K}_j (\mathbf{a})\right]^\dagger \mathbf{f}_j (\mathbf{a})$. Then, $\mathbf{K}_j (\mathbf{a}) \tilde{\mathbf{u}} = \mathbf{f}_j (\mathbf{a}) - \mathbf{r}_j$ for some $\mathbf{r}_j \in \mathrm{Ker}(\mathbf{K}(\mathbf{a}))$, $\mathbf{r}_j \neq \mathbf{0}$ by Lemma \ref{prop:pinv}. Thus, the left-hand-side of \eqref{eq:schur_eq} simplifies to
	\begin{equation}
	\begin{multlined}
	\mathbf{f}_j (\mathbf{a}) - \mathbf{K}_j (\mathbf{a}) \left[\mathbf{K}_j (\mathbf{a})\right]^{\dagger} \mathbf{f}_j (\mathbf{a}) = \\
	=\mathbf{f}_j (\mathbf{a}) -\mathbf{K}_j (\mathbf{a})\tilde{\mathbf{u}} = \mathbf{r}_j \neq \mathbf{0},
	\end{multlined}
	\end{equation}
	which completes the proof. 
\end{proof}
Finally, Proposition \ref{prop:image} and Lemma \ref{lemma:schur} facilitate an equivalent reformulation of the optimization problem \eqref{eq:fpinvs} as a~nonlinear semidefinite program
\begin{subequations}\label{eq:nsdp}
	\begin{alignat}{3}
	& \min_{\mathbf{a}, \mathbf{c}} \;& \bm{\omega}^\mathrm{T} \mathbf{c} \qquad\qquad\qquad\label{eq:nsdp_obj}\\
	& \;\mathrm{s.t.}\;\; &
	\begin{pmatrix}
	c_j & -\mathbf{f}_j(\mathbf{a})^\mathrm{T}\\
	-\mathbf{f}_j(\mathbf{a}) & \mathbf{K}_j(\mathbf{a})
	\end{pmatrix}&{}\succeq{} 0,\; \forall j \in \{1\dots n_\mathrm{lc}\},\hspace{-4mm}\label{eq:pmi}\\
	&& \overline{V} - \bm{\ell}^\mathrm{T} \mathbf{a} &{}\ge{}0,\label{eq:nsdp_vol}\\
	&& \mathbf{a} &{}\ge{} \mathbf{0},\label{eq:nsdp_areas}
	\end{alignat}
\end{subequations}
in which only the constraint \eqref{eq:pmi} lacks convexity. Importantly, all constraints are polynomial functions of $\mathbf{a}$, forming therefore a semi-algebraic feasible set.

\subsection{Efficient polynomial reformulation}\label{sec:eff}

The optimization problem \eqref{eq:nsdp} constitutes a minimization of a linear function over a semi-algebraic set, allowing for a~solution using the moment-sum-of-squares hierarchy, as briefly discussed in Section \ref{sec:mom-sos}. However, efficiency of the hierarchy can be improved after modifying \eqref{eq:nsdp} to provide a~tighter feasible set of relaxed problems and to reduce numerical issues by scaling the design variables. These modifications are outlined in the following paragraphs.

\subsubsection{Compactness of the feasible set}

We start by enforcing compactness of the feasible set of the optimization problem \eqref{eq:nsdp} because of two reasons. First, compactness is required for Theorem \ref{th:convergence}, in the form of Assumption~\ref{ass:comp}. Second, compactness also allows tightening the feasible sets of relaxed problems, notably improving numerical performance.

\begin{proposition}\label{prop:bounds}
	Assume that $\mathbf{a}^*$ and $\mathbf{c}^*$ are optimal cross-section areas and compliances associated with the optimization problem \eqref{eq:nsdp}. Then, $\forall i \in \{1\dots n_e\}: 0 \le a_i^* \le \overline{a}_i$ with $\overline{a}_i = \overline{V}/\ell_i$ and $\forall j \in \{1\dots n_\mathrm{lc}\}: 0 \le c_j^* \le \overline{c}/\omega_j$, where $\overline{c} = \sum_{j=1}^{n_\mathrm{lc}} \left[ \omega_j \mathbf{f}(\hat{\mathbf{a}})^\mathrm{T} \mathbf{K}(\hat{\mathbf{a}})^{-1} \mathbf{f}(\hat{\mathbf{a}}) \right]$ with  $\hat{\mathbf{a}} = \mathbf{1} \overline{V}/\sum_{i=1}^{n_\mathrm{e}} \ell_i$.
\end{proposition}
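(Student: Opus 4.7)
The proposition asserts two-sided bounds on $a_i^*$ and $c_j^*$, and splitting these into four cases makes the argument transparent. The lower bound $a_i^* \ge 0$ is exactly constraint \eqref{eq:nsdp_areas}, so nothing is to prove there. For the upper bound $a_i^* \le \overline{V}/\ell_i$, I will use the volume constraint \eqref{eq:nsdp_vol} together with $\mathbf{a}^* \ge \mathbf{0}$ and $\bm{\ell} > \mathbf{0}$: since $\ell_i a_i^* \le \sum_{k=1}^{n_\mathrm{e}} \ell_k a_k^* = \bm{\ell}^\mathrm{T}\mathbf{a}^* \le \overline{V}$, dividing by $\ell_i > 0$ yields the claim.

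For the compliance lower bound $c_j^* \ge 0$, I will invoke the PMI constraint \eqref{eq:pmi}: a positive semidefinite block matrix has non-negative diagonal entries, so $c_j^* \ge 0$. (Alternatively, the Schur-complement form of Lemma \ref{lemma:schur} combined with Proposition \ref{prop:image} gives $c_j^* \ge \mathbf{f}_j(\mathbf{a}^*)^\mathrm{T} [\mathbf{K}_j(\mathbf{a}^*)]^\dagger \mathbf{f}_j(\mathbf{a}^*) \ge 0$.)

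The core step, and the only non-trivial one, is the upper bound $c_j^* \le \overline{c}/\omega_j$. The plan is to exhibit a feasible point of \eqref{eq:nsdp} whose objective value is $\overline{c}$, and then deduce the per-load-case bound from the minimality and positivity of the remaining terms. Set $\hat{\mathbf{a}} = \mathbf{1}\overline{V}/\sum_{k=1}^{n_\mathrm{e}} \ell_k$; then $\bm{\ell}^\mathrm{T}\hat{\mathbf{a}} = \overline{V}$ and $\hat{\mathbf{a}} > \mathbf{0}$, so by the assumption made after \eqref{eq:original_areas}, $\mathbf{K}_j(\hat{\mathbf{a}}) \succ 0$ and is invertible. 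Choose $\hat{c}_j = \mathbf{f}_j(\hat{\mathbf{a}})^\mathrm{T}[\mathbf{K}_j(\hat{\mathbf{a}})]^{-1}\mathbf{f}_j(\hat{\mathbf{a}})$; then \eqref{eq:pmi} holds with equality via the Schur complement (Lemma \ref{lemma:schur} with the projector condition trivially satisfied because $\mathbf{K}_j(\hat{\mathbf{a}})[\mathbf{K}_j(\hat{\mathbf{a}})]^{-1} = \mathbf{I}$), and the volume and non-negativity constraints are clearly satisfied. Hence $(\hat{\mathbf{a}},\hat{\mathbf{c}})$ is feasible, yielding
\begin{equation}
\bm{\omega}^\mathrm{T}\mathbf{c}^* \;\le\; \bm{\omega}^\mathrm{T}\hat{\mathbf{c}} \;=\; \overline{c}.
\end{equation}
Since $c_k^* \ge 0$ and $\omega_k > 0$ for every $k$, dropping all terms except the $j$-th gives $\omega_j c_j^* \le \overline{c}$, i.e., $c_j^* \le \overline{c}/\omega_j$.

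The only real obstacle is verifying that the uniform design $\hat{\mathbf{a}}$ indeed produces a PMI-feasible point: this relies on the standing assumption $\mathbf{K}_j(\mathbf{a}) \succ 0$ for $\mathbf{a} > \mathbf{0}$, which rules out rigid-body motions, and on the fact that for an invertible $\mathbf{K}_j(\hat{\mathbf{a}})$ the image condition of Proposition \ref{prop:image} is automatic. The remaining arithmetic is routine and the argument needs no appeal to the moment-sum-of-squares machinery.
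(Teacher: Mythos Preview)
Your proof is correct and follows essentially the same route as the paper's: non-negativity of $a_i^*$ from \eqref{eq:nsdp_areas}, the upper bound on $a_i^*$ from the volume constraint combined with $\mathbf{a}^*\ge\mathbf{0}$, non-negativity of $c_j^*$ from the diagonal of the PMI \eqref{eq:pmi}, and the upper bound on $c_j^*$ by exhibiting the uniform design $(\hat{\mathbf{a}},\hat{\mathbf{c}})$ as a feasible point and then using $\omega_j c_j^* \le \bm{\omega}^\mathrm{T}\mathbf{c}^* \le \bm{\omega}^\mathrm{T}\hat{\mathbf{c}} = \overline{c}$. Your write-up is slightly more explicit about invoking the standing assumption $\mathbf{K}_j(\mathbf{a})\succ 0$ for $\mathbf{a}>\mathbf{0}$ and the Schur complement, but the substance is identical.
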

\begin{proof}
	The cross-section areas are non-negative by definition \eqref{eq:nsdp_areas}. Therefore, \eqref{eq:nsdp_vol} represents a conic combination and none of the structural elements can occupy a~larger volume than the volume bound $\overline{V}$, $\forall i \in \{1\dots n_\mathrm{e}\}: a_i^* \le \overline{V}/\ell_i$.
	
	The compliance variables are placed at the main diagonal of the polynomial matrix inequality (PMI) \eqref{eq:pmi} and are hence non-negative, $c_j^* \ge 0$. Then, because $\bm{\omega} > \mathbf{0}$, the conic combination $\bm{\omega}^\mathrm{T} \mathbf{c}^*$ is an upper bound for its summands, $\omega_j c_j^* \le \bm{\omega}^\mathrm{T} \mathbf{c}^*$. Moreover, since $\hat{\mathbf{a}}$ determines uniquely the compliances $\hat{\mathbf{c}}$, $\hat{c}_j = \mathbf{f}_j (\hat{\mathbf{a}})^\mathrm{T}\mathbf{K}_j(\hat{\mathbf{a}})^{-1}\mathbf{f}_j(\hat{\mathbf{a}})$, the pair $(\hat{\mathbf{a}}, \hat{\mathbf{c}})$ is a feasible solution to \eqref{eq:pmi}--\eqref{eq:nsdp_areas}, so that we also have $\bm{\omega}^\mathrm{T} \mathbf{c}^* \le \overline{c} = \bm{\omega}^\mathrm{T} \hat{\mathbf{c}}$. Consequently, $\forall j \in \{1\dots n_\mathrm{lc}\}: \omega_j c_j^* \le \overline{c}$.
\end{proof}

Among the bounds in Proposition \ref{prop:bounds}, only the compliance upper bounds are not enforced in the formulation \eqref{eq:nsdp}. Indeed, for any fixed $\mathbf{a}>\mathbf{0}$, $\mathbf{c} \rightarrow \bm{\infty}$ is feasible to \eqref{eq:nsdp}, so that Assumption \ref{ass:comp} is not satisfied. To make the design space bounded, we add the (redundant) upper-bound compliance constraint from Proposition \ref{prop:bounds}, or, eventually, an upper-bound obtained by solving the convex truss topology optimization problem instead, see Appendix~\ref{app:tto}. Subsequently, we arrive at the optimization problem
\begin{subequations}\label{eq:nsdpC}
	\begin{alignat}{2}
	\min_{\mathbf{a}, \mathbf{c}}\; && \bm{\omega}^\mathrm{T} \mathbf{c} \qquad\qquad\qquad\label{eq:nsdpC_obj}\\
	\mathrm{s.t.}\; && 
	\begin{pmatrix}
	c_j & -\mathbf{f}_j(\mathbf{a})^\mathrm{T}\\
	-\mathbf{f}_j(\mathbf{a}) & \mathbf{K}_j(\mathbf{a})
	\end{pmatrix}&{}\succeq{} 0,\; \forall j \in \{1\dots n_\mathrm{lc}\},\label{eq:ndspC_pmi}\hspace{-3mm}\\
	&& \overline{V}-\bm{\ell}^\mathrm{T} \mathbf{a} &{}\ge{}0,\label{eq:nsdpC_vol}\\
	&& \overline{c}-\bm{\omega}^\mathrm{T} \mathbf{c} &{}\ge{}0,\\
	&& \mathbf{a}&{}\ge{}\mathbf{0},\label{eq:nsdpC_areas}
	\end{alignat}
\end{subequations}
for which we have the following result:

\begin{proposition}\label{prop:compact}
	The feasible set of \eqref{eq:nsdpC} is compact.
\end{proposition}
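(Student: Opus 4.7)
The plan is to show that the feasible set is both closed and bounded in $\mathbb{R}^{n_\mathrm{e}+n_\mathrm{lc}}$, hence compact by the Heine-Borel theorem.

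For closedness, I would observe that every constraint in \eqref{eq:nsdpC} is either a non-strict polynomial inequality or a polynomial matrix inequality, and both types define closed sets: the polynomial scalar inequalities are preimages of the closed half-line $[0,\infty)$ under continuous maps, while the PMI \eqref{eq:ndspC_pmi} is the preimage of the closed cone $\mathbb{S}^{n_{\mathrm{dof},j}+1}_{\succeq 0}$ under the continuous map $(\mathbf{a},c_j) \mapsto \begin{pmatrix} c_j & -\mathbf{f}_j(\mathbf{a})^\mathrm{T}\\ -\mathbf{f}_j(\mathbf{a}) & \mathbf{K}_j(\mathbf{a})\end{pmatrix}$. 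A finite intersection of closed sets is closed.

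For boundedness, the argument follows exactly the lines of Proposition \ref{prop:bounds}. From \eqref{eq:nsdpC_areas} we have $a_i \ge 0$, and combining this with \eqref{eq:nsdpC_vol} and the fact that $\bm{\ell} > \mathbf{0}$ (elements have strictly positive lengths), one obtains $a_i \le \overline{V}/\ell_i$ for every $i \in \{1,\dots,n_\mathrm{e}\}$. For the compliance variables, the $(1,1)$ block of the PMI \eqref{eq:ndspC_pmi} forces $c_j \ge 0$ (the diagonal of a PSD matrix is non-negative), and since the added constraint $\overline{c} - \bm{\omega}^\mathrm{T}\mathbf{c} \ge 0$ is a conic combination with strictly positive weights $\bm{\omega}>\mathbf{0}$, each summand satisfies $\omega_j c_j \le \overline{c}$, i.e.\ $c_j \le \overline{c}/\omega_j$. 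Both $\mathbf{a}$ and $\mathbf{c}$ therefore lie in a bounded box.

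I do not foresee a real obstacle here: the proof is essentially a re-use of Proposition \ref{prop:bounds} once the extra compliance upper-bound constraint is added, together with a one-line continuity argument for closedness. The only subtle point to flag is why the added constraint is indeed needed (namely that without it, for any $\mathbf{a}>\mathbf{0}$ the compliances $c_j$ could be sent to $+\infty$ while keeping the PMI feasible, preventing boundedness in the $\mathbf{c}$ direction), so I would briefly remark on this motivation before concluding.
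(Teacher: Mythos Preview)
Your proposal is correct and follows essentially the same route as the paper: boundedness is inherited from Proposition~\ref{prop:bounds} (with the newly added compliance upper bound supplying the missing direction), closedness comes from the continuity of the polynomial maps together with closedness of the PSD cone, and compactness then follows by Heine--Borel. The only difference is that you spell out the box bounds and the preimage argument a bit more explicitly than the paper does.
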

\begin{proof}
	The feasible set is bounded based on Proposition \ref{prop:bounds}. Moreover, $\mathbf{a}$ and $\mathbf{c}$ satisfying conditions \eqref{eq:nsdpC_vol}--\eqref{eq:nsdpC_areas} form a~closed set. Thus, it suffices to show that \eqref{eq:ndspC_pmi} is closed. But the elements in \eqref{eq:ndspC_pmi} are polynomial functions that are continuous. Moreover, the set of semidefinite matrices is closed so $\mathbf{a}$ and $\mathbf{c}$ satisfying \eqref{eq:ndspC_pmi} live in a closed set. Boundedness and closeness imply compactness because we are in a finite dimensional space.
\end{proof}

\subsubsection{Scaling and box constraints}

After introducing box constraints in formulation \eqref{eq:nsdpC}, we can scale all variables domains to $\left[-1,1\right]$. This scaling reduces numerical issues that may arise during the problem solution. To this goal, we have
\begin{subequations}
	\begin{align}
	&c_j = 
	\frac{1}{2 \omega_j} \left(c_{\mathrm{s},j} + 1\right) \overline{c}, &\forall j &\in \{1\dots n_\mathrm{lc}\},\label{eq:scaled_c}\\
	&a_i = 0.5 \left(a_{\mathrm{s},i} + 1\right) \overline{a}_i,\quad &\forall i &\in \{1\dots n_\mathrm{e}\},\label{eq:scaled_a}
\end{align}
\end{subequations}
where $\mathbf{a}_\mathrm{s}$ and $\mathbf{c}_\mathrm{s}$ are the scaled cross-section areas and compliance variables.

In addition, we explicitly insert the box constraints into the optimization problem formulation to tighten feasible sets of the relaxed problems. There are multiple options how to write these box constraints $\mathbf{a}_\mathrm{s}, \mathbf{c}_\mathrm{s} \in \left[-1,1\right]$, e.g.,
\begin{subequations}
	\begin{align}
	\begin{aligned}\label{eq:csc_lin}
	-1 \le a_{\mathrm{s},i} &\le 1, \quad \forall i \in \{1\dots n_\mathrm{b}\},\\
	-1 \le c_{\mathrm{s},j} &\le 1, \quad \forall j \in \{1\dots n_\mathrm{lc}\},
	\end{aligned}\\
	\begin{aligned}\label{eq:csc_all}
	a_{\mathrm{s},i}^2 &\le 1, \quad \forall i \in \{1\dots n_\mathrm{b}\},\\
	c_{\mathrm{s},j}^2 &\le 1, \quad \forall j \in \{1\dots n_\mathrm{lc}\}.
	\end{aligned}\\
	\begin{aligned}\label{eq:csc_all2}
	a_{\mathrm{s},i}^4 &\le 1, \quad \forall i \in \{1\dots n_\mathrm{b}\},\\
	c_{\mathrm{s},j}^4 &\le 1, \quad \forall j \in \{1\dots n_\mathrm{lc}\}.
	\end{aligned}
	\end{align}
\end{subequations}
Despite equivalent in what they enforce, their numerical performance in the moment-sum-of-squares hieararchy varies considerably; we refer the reader to the recent note of \citet{Anjos2020}. Here, we use the quadratic bounds \eqref{eq:csc_all} when the third-order terms $\mathbf{K}_{i,j}^{(3)} a_i^3$ are absent, and quartic bounds \eqref{eq:csc_all2} otherwise. Then, the optimization problem reads
\begin{subequations}\label{eq:nsdpSC}
	\begin{alignat}{2}
	\min_{\mathbf{a}_\mathrm{s}, \mathbf{c}_\mathrm{s}}\; &&\sum_{j=1}^{n_\mathrm{lc}} \left[
	0.5 \left(c_{\mathrm{s},j} + 1\right) \overline{c}\right]\qquad\qquad\quad\;\;
	\label{nsdpSC_obj}\hspace{-6mm}\\
	\mathrm{s.t.}\; &&
	\begin{alignedat}{1}
	\begin{pmatrix}
	\frac{1}{2 \omega_j} \left(c_{\mathrm{s},j} + 1\right) \overline{c} & -\mathbf{f}_j(\mathbf{a}_\mathrm{s})^\mathrm{T}\\
	-\mathbf{f}_j(\mathbf{a}_\mathrm{s}) & \mathbf{K}_j(\mathbf{a}_\mathrm{s})
	\end{pmatrix}\succeq{} 0,\\
	\forall j \in \{1\dots n_\mathrm{lc}\},
	\end{alignedat}
	\label{eq:pmiSC}\hspace{-6mm}\\
	&& 2 - n_\mathrm{e} - \mathbf{1}^\mathrm{T} \mathbf{a}_{\mathrm{s}} {}\ge{}0,\label{eq:volSC}\hspace{-6mm}\\
	&& 1 - \bm{\omega}^\mathrm{T} \mathbf{c}_\mathrm{s} {}\ge{}0,\label{eq:compSumSC}\hspace{-6mm}\\
	&& \text{bound constraints } \eqref{eq:csc_all} \text{ or } \eqref{eq:csc_all2}.\hspace{-6mm}\label{eq:boxSC}
	\end{alignat}
\end{subequations}
Because the feasible set of \eqref{eq:nsdpSC} is compact by Proposition \ref{prop:compact}, one may tempt to add a redundant polynomial inequality constraint to satisfy Assumption \ref{ass:comp}. However, the assumption is already satisfied in our case.
\begin{proposition}\label{prop:archimedean}
	The optimization problem \eqref{eq:nsdpSC} satisfies Assumption \ref{ass:comp}. 
\end{proposition}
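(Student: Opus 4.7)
The plan is to verify Assumption~\ref{ass:comp} directly, using the fact that the box constraints \eqref{eq:boxSC} already force compactness of the admissible set in an algebraic way, so no additional redundant constraint is needed. Casting \eqref{eq:nsdpSC} as an instance of \eqref{eq:po}, I regard $\mathbf{G}(\mathbf{x})$ as the block-diagonal assembly of the PMI \eqref{eq:pmiSC}, the scalar blocks \eqref{eq:volSC}, \eqref{eq:compSumSC}, and the $n_\mathrm{e}+n_\mathrm{lc}$ scalar blocks in \eqref{eq:boxSC}, where $\mathbf{x}=(\mathbf{a}_\mathrm{s},\mathbf{c}_\mathrm{s})$.

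Next I would choose $p_0(\mathbf{x})=0$ (trivially SOS) and take $\mathbf{R}(\mathbf{x})$ to be the constant diagonal matrix that puts a $1$ on each of the $n_\mathrm{e}+n_\mathrm{lc}$ scalar diagonal entries belonging to the box-constraint blocks and $0$ on all other diagonal entries (including the ones corresponding to \eqref{eq:pmiSC}, \eqref{eq:volSC}, and \eqref{eq:compSumSC}). This $\mathbf{R}$ is a constant positive semidefinite matrix and therefore admits a constant Cholesky factorization $\mathbf{R}=\mathbf{H}\mathbf{H}^\mathrm{T}$, so it qualifies as an SOS polynomial matrix in the sense of Definition~\ref{def:sos}.

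With this choice, $\langle \mathbf{R}(\mathbf{x}),\mathbf{G}(\mathbf{x})\rangle$ collapses to the sum of the box-constraint polynomials. Writing $k=1$ when \eqref{eq:csc_all} is used and $k=2$ when \eqref{eq:csc_all2} is used, one gets
\begin{equation*}
p_0(\mathbf{x})+\langle \mathbf{R}(\mathbf{x}),\mathbf{G}(\mathbf{x})\rangle
= (n_\mathrm{e}+n_\mathrm{lc}) - \sum_{i=1}^{n_\mathrm{e}} a_{\mathrm{s},i}^{2k} - \sum_{j=1}^{n_\mathrm{lc}} c_{\mathrm{s},j}^{2k}.
\end{equation*}
Its superlevel set is the closed $\ell_{2k}$-ball of radius $(n_\mathrm{e}+n_\mathrm{lc})^{1/(2k)}$ around the origin, which is compact in the finite-dimensional space $\mathbb{R}^{n_\mathrm{e}+n_\mathrm{lc}}$, so Assumption~\ref{ass:comp} holds.

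There is essentially no obstacle to the argument: it amounts to recognizing that a constant PSD ``selector'' matrix is admissible as an SOS multiplier in Definition~\ref{def:sos}, and that keeping only the box blocks of $\mathbf{G}$ is already enough to yield a bounded sublevel set. The only point that needs care is the bookkeeping of the block structure of $\mathbf{G}(\mathbf{x})$ so that the inner product produces exactly the desired sum of univariate polynomials; everything else follows from continuity and closedness of preimages.
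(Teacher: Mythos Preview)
Your proof is correct and follows essentially the same approach as the paper: both select, via a constant block-diagonal matrix that is $1$ on the box-constraint blocks and $0$ elsewhere, exactly those entries of $\mathbf{G}$ corresponding to \eqref{eq:boxSC}, observe that this constant matrix is SOS (the paper notes it is idempotent, you invoke a Cholesky factor), take $p_0=0$, and conclude that the resulting inner product $(n_\mathrm{e}+n_\mathrm{lc})-\sum a_{\mathrm{s},i}^{2k}-\sum c_{\mathrm{s},j}^{2k}$ has a compact superlevel set. The only cosmetic difference is that you unify the quadratic and quartic cases with the exponent $2k$, whereas the paper treats them separately.
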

\begin{proof}
	Let $\mathbf{G}(\mathbf{a}_\mathrm{s},\mathbf{c}_\mathrm{s})$ be a block-diagonal matrix with the blocks \eqref{eq:pmiSC}--\eqref{eq:boxSC} and let $\mathbf{H}$ be a sparse matrix of the same dimensions with the structure
	\begin{equation}
	\mathbf{H} = \begin{pmatrix}
	\mathbf{0} & \mathbf{0}\\
	\mathbf{0} & \mathbf{I}
	\end{pmatrix},
	\end{equation}
	in which the identity matrix $\mathbf{I} \in \mathbb{S}^{n_\mathrm{e} + n_\mathrm{lc}}$ matches the positions of \eqref{eq:boxSC} in $\mathbf{G}(\mathbf{a}_\mathrm{s},\mathbf{c}_\mathrm{s})$. Clearly, $\mathbf{H}$ is a SOS because of $\mathbf{H} = \mathbf{H} \mathbf{H}^\mathrm{T}$, recall Definition \ref{def:sos}. Then, if \eqref{eq:csc_all} is used, $p_{\eqref{eq:csc_all}} = \langle \mathbf{H}\mathbf{H}^\mathrm{T}, \mathbf{G}(\mathbf{a}_\mathrm{s},\mathbf{c}_\mathrm{s}) \rangle = n_\mathrm{e}+n_\mathrm{lc} - \sum_{i=1}^{n_\mathrm{e}}a_{\mathrm{s},i}^2 - \sum_{j=1}^{n_\mathrm{lc}}c_{\mathrm{s},j}^2$, so that the level set $\{\mathbf{a}_\mathrm{s} \in \mathbb{R}^{n_\mathrm{e}}, \mathbf{c}_\mathrm{s} \in \mathbb{R}^{n_\mathrm{lc}}\;\vert\; p_{\eqref{eq:csc_all}} \ge 0\}$ is compact. For \eqref{eq:csc_all2}, $p_{\eqref{eq:csc_all2}} = n_\mathrm{e}+n_\mathrm{lc} - \sum_{i=1}^{n_\mathrm{e}}a_{\mathrm{s},i}^4 - \sum_{j=1}^{n_\mathrm{lc}}c_{\mathrm{s},j}^4$, showing that also the level set $\{\mathbf{a}_\mathrm{s} \in \mathbb{R}^{n_\mathrm{e}}, \mathbf{c}_\mathrm{s} \in \mathbb{R}^{n_\mathrm{lc}}\;\vert\; p_{\eqref{eq:csc_all2}} \ge 0\}$ is compact.
\end{proof}
\begin{remark}\label{rem:tigher}
The higher-order constraints in \eqref{eq:boxSC} are tighter in the moment representation, i.e., \eqref{eq:csc_all} are tighter than \eqref{eq:csc_lin} and \eqref{eq:csc_all2} are tighter than \eqref{eq:csc_all}.
\end{remark}
To see this, assume that $\left(y_0, y_1, y_2, y_3, y_4\right)$ are the moments associated with the canonical basis of the vector space of polynomials of degree at most four, $(1, x, x^2, x^3, x^4)$, where one can substitute $x$ by any element of $\mathbf{a}_\mathrm{s}$ or $\mathbf{c}_\mathrm{s}$. Then, in the first relaxation of the moment-sum-of-squares hierarchy, the fourth-order constraint $1-x^4 \ge 0$ becomes
\begin{equation}\label{eq:4}
y_0 - y_4 \ge 0,
\end{equation}
the quadratic constraint $1-x^2 \ge 0$ yields
\begin{equation}
y_0 - y_{2} \ge 0, \label{eq:1}
\end{equation}
and the box constraint $-1 \le x \le 1$ provides
\begin{equation}
-y_0 \le y_1 \le y_0, \label{eq:box}
\end{equation}
with $y_0 = 1$. Moreover, the localizing matrix of the entire optimization problem contains principal submatrices
\begin{subequations}\label{eq:lmat}
\begin{align}
\begin{pmatrix}
y_0  & y_1 \\
y_1 & y_2
\end{pmatrix} &\succeq 0,\label{eq:lmat_a}\\
\begin{pmatrix}
y_0  & y_2 \\
y_2 & y_4
\end{pmatrix} &\succeq 0,\label{eq:lmat_b}\\
\begin{pmatrix}
y_2  & y_3 \\
y_3 & y_4
\end{pmatrix} &\succeq 0,\label{eq:lmat_c}
\end{align}
\end{subequations}
that must be positive semi-definite as the entire localizing matrix is. Notice that \eqref{eq:lmat_b} and \eqref{eq:lmat_c} are not present in the degree-one relaxation, which can, therefore, be used only if the bending stiffness is a polynomial of degree at most two. Thus, we omit them in the reasoning behind the relaxation tightness for \eqref{eq:csc_lin} and \eqref{eq:csc_all}.

In the lowest, degree-two, relaxation the fourth-order constraints \eqref{eq:csc_all2} provide $y_4 \le 1$ from Eq. \eqref{eq:4} and $y_2 \ge 0$ with $y_4 \ge 0$ based on \eqref{eq:lmat}. Moreover, the determinants of \eqref{eq:lmat} must be non-negative, implying that $y_1^2 \le y_2$, $y_3^2 \le y_2 y_4$, and $y_2^2 \le y_4$. A combination of these inequalities then results in $0 \le y_1^4 \le y_2^2 \le y_4 \le 1$ and $0 \le y_3^2 \le y_2 y_4 \le 1$.

For the quadratic constraints \eqref{eq:csc_all}, $y_2 \le 1$ from Eq. \eqref{eq:1} and $y_2 \ge 0$ because of Eq. \eqref{eq:lmat}. Writing the determinant of \eqref{eq:lmat} then provides us with $y_1^2 \le y_2$. Consequently, we observe that $0 \le y_1^2 \le y_2 \le 1$. Notice that this inequality is automatically satisfied in the preceding case.

In the case of pure box constraints \eqref{eq:csc_lin}, we only have $0 \le y_1^2 \le 1$, Eq. \eqref{eq:box}, and $y_1^2 \le y_2$, Eq.~\eqref{eq:lmat}. Note that there is no upper bound for $y_2$, which can attain arbitrarily large values in the first relaxation. From the mechanical point of view, this allows for an arbitrarily-large rotational stiffnesses $\mathbf{K}_{j,i}^{(2)} a_i^2$ of the elements.

These observations then allow us to show feasibility of the first-order moments for \eqref{eq:volSC}--\eqref{eq:boxSC}:

\begin{proposition}\label{prop:feasiblemom}
	Let $\mathbf{y}^*_{\mathbf{c}^1}$ and $\mathbf{y}^*_{\mathbf{a}^1}$ be the first-order moments associated with the variables $\mathbf{c}_\mathrm{s}$ and $\mathbf{a}_\mathrm{s}$ obtained from a solution to any relaxation of \eqref{eq:nsdpSC} using the moment-sum-of-squares hierarchy. Then, these moments satisfy
	\begin{subequations}
	\begin{align}
	2 - n_\mathrm{e} - \mathbf{1}^\mathrm{T} \mathbf{y}^*_{\mathbf{a}^1} &\ge 0,\label{eq:fom1}\\
	1 - \bm{\omega}^\mathrm{T} \mathbf{y}^*_{\mathbf{c}^1} &\ge 0,\label{eq:fom2}\\
	1 - \left(y^*_{a_i^1}\right)^2 & \ge 0 \quad\text{if \eqref{eq:csc_all} is used},\label{eq:fom3}\\
	1 - \left(y^*_{a_i^1}\right)^4 & \ge 0 \quad\text{if \eqref{eq:csc_all2} is used},\\
	1 - \left(y^*_{c_j^1}\right)^2 & \ge 0 \quad\text{if \eqref{eq:csc_all} is used},\\
	1 - \left(y^*_{c_j^1}\right)^4 & \ge 0 \quad\text{if \eqref{eq:csc_all2} is used}.\label{eq:fom4}
	\end{align}
	\end{subequations}
\end{proposition}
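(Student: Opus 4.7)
The plan is to separate the six inequalities into two groups: the linear moment constraints \eqref{eq:fom1}--\eqref{eq:fom2} that come directly from the volume and compliance-sum constraints, and the polynomial moment constraints \eqref{eq:fom3}--\eqref{eq:fom4} that require the moment-matrix positive semidefiniteness in addition to the localizing constraints.

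For \eqref{eq:fom1} and \eqref{eq:fom2}, I would observe that \eqref{eq:volSC} and \eqref{eq:compSumSC} are polynomials of degree one in $(\mathbf{a}_\mathrm{s}, \mathbf{c}_\mathrm{s})$. In any relaxation of degree $r \ge 1$, their localizing matrices $\mathbf{M}_{r-1}(g \mathbf{y})$ contain, as their $(1,1)$ entry, the polynomial $g$ itself evaluated by substituting each variable with the corresponding first-order moment. Positive semidefiniteness of the localizing matrix forces this scalar entry to be non-negative, which is precisely \eqref{eq:fom1} and \eqref{eq:fom2}.

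For \eqref{eq:fom3}--\eqref{eq:fom4}, I would reuse the moment-side reasoning spelled out just before the proposition. Fix a scalar variable $x \in \{a_{\mathrm{s},i}, c_{\mathrm{s},j}\}$ and let $y_k$ denote the moment indexed by $x^k$. If the quadratic bound \eqref{eq:csc_all} is active, then the $(1,1)$ entry of the localizing matrix associated with $1 - x^2 \ge 0$ yields $1 - y_2 \ge 0$, while the principal submatrix $\begin{pmatrix} 1 & y_1 \\ y_1 & y_2 \end{pmatrix}$ of the moment matrix is PSD, giving $y_1^2 \le y_2$. Chaining these yields $y_1^2 \le 1$. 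If \eqref{eq:csc_all2} is used instead, the localizing matrix of $1 - x^4 \ge 0$ gives $y_4 \le 1$, while the moment matrix contains the PSD submatrices \eqref{eq:lmat_a}--\eqref{eq:lmat_b}, whose nonnegativity of determinants yields $y_1^2 \le y_2$ and $y_2^2 \le y_4$. Composing these chains produces $y_1^4 \le y_2^2 \le y_4 \le 1$, which is exactly the required bound. Applying this reasoning componentwise to each $a_{\mathrm{s},i}$ and each $c_{\mathrm{s},j}$ delivers \eqref{eq:fom3}--\eqref{eq:fom4}.

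The main obstacle is not a technical one but rather notational: the reader must correctly identify which principal submatrix of $\mathbf{M}_r(\mathbf{y})$ plays the role of \eqref{eq:lmat} for a given scalar variable in the multivariate setting, and which principal submatrices of the localizing matrices reduce to the required scalar inequalities. Once these identifications are made, the proof reduces to invoking the monotone chain $y_1^2 \le y_2 \le 1$ in the quadratic case and $y_1^4 \le y_2^2 \le y_4 \le 1$ in the quartic case, both already derived above Proposition~\ref{prop:feasiblemom}. No additional semialgebraic machinery is required.
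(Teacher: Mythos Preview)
Your proposal is correct and follows essentially the same approach as the paper: the paper's proof simply states that \eqref{eq:fom1}--\eqref{eq:fom2} hold ``trivially from construction of the hierarchy'' (your localizing-matrix $(1,1)$-entry argument makes this explicit), and that \eqref{eq:fom3}--\eqref{eq:fom4} follow from Remark~\ref{rem:tigher}, which is precisely the moment-chain reasoning $y_1^2 \le y_2 \le 1$ and $y_1^4 \le y_2^2 \le y_4 \le 1$ you reproduce. Your write-up is more detailed than the paper's two-line proof, but the underlying ideas coincide.
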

\begin{proof}
	\eqref{eq:fom1} and \eqref{eq:fom2} hold trivially from construction of the hierarchy. \eqref{eq:fom3}--\eqref{eq:fom4} follow from Remark \ref{rem:tigher}.
\end{proof}

\subsection{Recovering feasible upper-bound solutions}\label{sec:ub}

In Proposition \ref{prop:feasiblemom}, we have shown that the first-order moments obtained by solving any relaxation of the moment-sum-of-squares hierarchy satisfy all the constraints of \eqref{eq:nsdpSC} except for \eqref{eq:pmiSC}. This section is therefore devoted to the question how to ``correct'' these moments to produce feasible upper-bounds to the original problem \eqref{eq:nsdp} and provide a natural sufficient condition of global optimality.

We start by proving the following essential result:

\begin{proposition}\label{prop:inimage}
	Let $\mathbf{y}^*_{\mathbf{c}^1}$ and $\mathbf{y}^*_{\mathbf{a}^1}$ be the first-order moments associated with the variables $\mathbf{c}_\mathrm{s}$ and $\mathbf{a}_\mathrm{s}$ obtained from a solution to any relaxation of \eqref{eq:nsdpSC} using the moment-sum-of-squares hierarchy and let $\forall i \in \{1\dots n_\mathrm{e}\}: \tilde{a}_i = 0.5(y_{a_i^1}^{*}+1)\overline{a}$ be the corresponding cross-section areas. Then,
	\begin{equation}\label{eq:propinimage}
	\mathbf{f}_{j,0} + \sum_{i=1}^{n_\mathrm{e}} \mathbf{f}_{j,i} \tilde{a}_i \in \mathrm{Im}\left(\mathbf{K}_{j,0} + \sum_{i=1}^{n_\mathrm{e}}\sum_{k=1}^3 \mathbf{K}_{j,i}^{(k)} \tilde{a}_i^k\right).
	\end{equation}
\end{proposition}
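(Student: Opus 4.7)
The plan is to read off the condition $\mathbf{f}_j(\tilde{\mathbf{a}}) \in \mathrm{Im}(\mathbf{K}_j(\tilde{\mathbf{a}}))$ directly from positive semidefiniteness of the localizing matrix of \eqref{eq:pmiSC} at the moment vector $\mathbf{y}^*$, via the Schur-complement characterization provided by Lemma \ref{lemma:schur} and Proposition \ref{prop:image}. The principal sub-block of $\mathbf{M}_{r-d}(\mathbf{G}_j \mathbf{y}^*)$ at the constant monomial is exactly the matrix in \eqref{eq:pmiSC} with every monomial $a_{\mathrm{s},i}^k$ appearing in $\mathbf{f}_j(\mathbf{a}_\mathrm{s})$ and $\mathbf{K}_j(\mathbf{a}_\mathrm{s})$ replaced by the corresponding moment $y^*_{a_{\mathrm{s},i}^k}$; denote the resulting moment-linearized stiffness and force by $\mathbf{K}_j(\mathbf{y}^*)$ and $\mathbf{f}_j(\mathbf{y}^*)$. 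Because this block is PSD as a principal submatrix of a PSD matrix, Lemma \ref{lemma:schur} together with Proposition \ref{prop:image} immediately yields $\mathbf{f}_j(\mathbf{y}^*) \in \mathrm{Im}(\mathbf{K}_j(\mathbf{y}^*))$; and since $\mathbf{f}_j$ is affine in $\mathbf{a}$ by \eqref{eq:assembly_f}, it commutes with the first-moment substitution composed with the scaling \eqref{eq:scaled_a}, giving $\mathbf{f}_j(\mathbf{y}^*) = \mathbf{f}_j(\tilde{\mathbf{a}})$.

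It then remains to establish the inclusion $\mathrm{Im}(\mathbf{K}_j(\mathbf{y}^*)) \subseteq \mathrm{Im}(\mathbf{K}_j(\tilde{\mathbf{a}}))$, which I would prove by a kernel argument. Pick $\mathbf{v} \in \mathrm{Ker}(\mathbf{K}_j(\tilde{\mathbf{a}}))$. Because each summand $\mathbf{K}_{j,0}$ and $\tilde{a}_i^k \mathbf{K}_{j,i}^{(k)}$ in the assembly \eqref{eq:assembly_K} is PSD with non-negative coefficient, the identity $\mathbf{v}^{\mathrm{T}} \mathbf{K}_j(\tilde{\mathbf{a}}) \mathbf{v} = 0$ forces $\mathbf{v}^{\mathrm{T}} \mathbf{K}_{j,0} \mathbf{v} = 0$ and $\mathbf{v}^{\mathrm{T}} \mathbf{K}_{j,i}^{(k)} \mathbf{v} = 0$ for every active index $i$ (those with $\tilde{a}_i > 0$) and every $k \in \{1,2,3\}$. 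Substituting into $\mathbf{v}^{\mathrm{T}} \mathbf{K}_j(\mathbf{y}^*) \mathbf{v}$ would then kill the contributions of all active elements, so that positive semidefiniteness of $\mathbf{K}_j(\mathbf{y}^*)$ would place $\mathbf{v}$ in its kernel, provided the contributions from the inactive elements vanish as well.

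The main obstacle is this last point: one must show that whenever $\tilde{a}_i = 0$ the moments $y^*_{a_{\mathrm{s},i}^k}$ conspire so that $y^*_{a_i^k} = 0$ for all $k \in \{1,2,3\}$, and thus element $i$ truly disappears from $\mathbf{K}_j(\mathbf{y}^*)$. I would exploit the interplay between the moment-matrix inequality $\mathbf{M}_r(\mathbf{y}^*) \succeq 0$ and the box bounds \eqref{eq:csc_all}/\eqref{eq:csc_all2}. The condition $\tilde{a}_i = 0$ is equivalent to $y^*_{a_{\mathrm{s},i}^1} = -1$, and the $2 \times 2$ principal submatrix of $\mathbf{M}_r(\mathbf{y}^*)$ indexed by $\{1, a_{\mathrm{s},i}\}$ then forces $y^*_{a_{\mathrm{s},i}^2} \geq 1$, which combined with the upper bound of $1$ produced by \eqref{eq:csc_all}/\eqref{eq:csc_all2} at the localizing-matrix level gives $y^*_{a_{\mathrm{s},i}^2} = 1$. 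Propagating this rank deficiency to the $3 \times 3$ principal submatrix indexed by $\{1, a_{\mathrm{s},i}, a_{\mathrm{s},i}^2\}$, which is available at the relaxation order $r \geq 2$ required whenever the cubic stiffness term is present, forces $y^*_{a_{\mathrm{s},i}^3} = -1$ by non-negativity of its determinant. A binomial expansion of $a_i^k = (0.5\,\overline{a}_i(a_{\mathrm{s},i} + 1))^k$ then collapses each $y^*_{a_i^k}$ to zero, eliminating the inactive element from $\mathbf{K}_j(\mathbf{y}^*)$ and closing the inclusion.
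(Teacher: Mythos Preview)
Your proposal is correct and follows essentially the same approach as the paper: read off $\mathbf{f}_j(\mathbf{y}^*) \in \mathrm{Im}(\mathbf{K}_j(\mathbf{y}^*))$ from the PSD localizing block via Lemma~\ref{lemma:schur} and Proposition~\ref{prop:image}, then show that for every inactive element ($\tilde{a}_i=0$, i.e.\ $y^*_{a_{\mathrm{s},i}^1}=-1$) the moment-matrix principal minors together with the box bounds \eqref{eq:csc_all}/\eqref{eq:csc_all2} force $y^*_{a_{\mathrm{s},i}^2}=1$ and, when cubic terms are present, $y^*_{a_{\mathrm{s},i}^3}=-1$, so that the binomial expansion collapses the element's contribution to $\mathbf{K}_j(\mathbf{y}^*)$ to zero---exactly the computation the paper carries out around \eqref{eq:mom2}.

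The one place where you are slightly more explicit than the paper is the passage from $\mathbf{f}_j(\tilde{\mathbf{a}})\in\mathrm{Im}(\mathbf{K}_j(\mathbf{y}^*))$ to $\mathbf{f}_j(\tilde{\mathbf{a}})\in\mathrm{Im}(\mathbf{K}_j(\tilde{\mathbf{a}}))$. The paper states this as a rank comparison (``that would result in a lower rank of $\mathbf{K}_j(\tilde{\mathbf{a}})$''), implicitly using that for active elements the bending coefficients change only by positive scalars and hence leave the image unchanged. Your kernel argument---take $\mathbf{v}\in\mathrm{Ker}(\mathbf{K}_j(\tilde{\mathbf{a}}))$, use PSD of each summand to get $\mathbf{v}^{\mathrm T}\mathbf{K}_{j,i}^{(k)}\mathbf{v}=0$ for active $i$, use the vanishing moment coefficients for inactive $i$, and conclude $\mathbf{v}^{\mathrm T}\mathbf{K}_j(\mathbf{y}^*)\mathbf{v}=0$ hence $\mathbf{v}\in\mathrm{Ker}(\mathbf{K}_j(\mathbf{y}^*))$---makes this step self-contained and does not require checking signs of the higher-order moment coefficients for active elements.
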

\begin{proof}
	In the lowest relaxation of the moment-sum-of-squares hierarchy, the PMI constraint \eqref{eq:pmiSC} becomes
	\begin{equation}
	\begin{pmatrix}
	\frac{1}{2 \omega_j} \left(y_{c_j^1}^{*} + 1\right) \overline{c} & -\mathbf{f}_j^\mathrm{T}\left(\mathbf{y}^{*}_{\mathbf{a}^1}\right)^\mathrm{T}\\
	-\mathbf{f}_j\left(\mathbf{y}^{*}_{\mathbf{a}^1}\right) & \mathbf{K}_j\left(\mathbf{y}^{*}_{\mathbf{a}^1}, \mathbf{y}^{*}_{\mathbf{a}^2}, \mathbf{y}^{*}_{\mathbf{a}^3}\right)
	\end{pmatrix} \succeq 0,
	\end{equation}
	where $\mathbf{y}^*_{\mathbf{a}^2}$ and $\mathbf{y}^*_{\mathbf{a}^3}$ are the second- and third-order moments associated with $\mathbf{a}_\mathrm{s}$ and, with a slight abuse of notation, $\mathbf{K}_j\left(\mathbf{y}^{*}_{\mathbf{a}^1}, \mathbf{y}^{*}_{\mathbf{a}^2}, \mathbf{y}^{*}_{\mathbf{a}^3}\right)$ and $\mathbf{f}_j  \left(\mathbf{y}^{*}_{\mathbf{a}^1}\right)$ are the stiffness matrix and force column vector constructed from the moments $\mathbf{y}$. Using Lemma \ref{lemma:schur} and Proposition \ref{prop:image}, we observe that 
	\begin{equation}
	\mathbf{f}(\mathbf{y}^{*}_{\mathbf{a}^1}) \in \mathrm{Im}\left(\mathbf{K}_j\left(\mathbf{y}^{*}_{\mathbf{a}^1}, \mathbf{y}^{*}_{\mathbf{a}^2}, \mathbf{y}^{*}_{\mathbf{a}^3}\right)\right).
	\end{equation}
	Because we have considered solely degree-one moments in \eqref{eq:propinimage} and $\forall \mathbf{a}>\mathbf{0}: \mathbf{K}_j(\mathbf{a}) \succ 0$ was our initial assumption, we must show that the combination of $a_i = 0$ with $I_i > 0$ cannot occur for any $i$, because that would result in a lower rank of $\mathbf{K}_j (\tilde{\mathbf{a}})$ when compared with $\mathbf{K}_j\left(\mathbf{y}^{*}_{\mathbf{a}^1}, \mathbf{y}^{*}_{\mathbf{a}^2}, \mathbf{y}^{*}_{\mathbf{a}^3}\right)$.
	
	To this goal, let $a_i = 0$, which is equivalent to $y_{a_i^1}^{*} = -1$. Then, the non-negative determinant of \eqref{eq:lmat_a} and \eqref{eq:lmat_b} with the inequalities \eqref{eq:csc_all} or \eqref{eq:csc_all2} imply that $y_{a_i^2}^{*} = y_{a_i^4}^{*} = 1$. Moreover, also the determinant of the principal submatrix
	\begin{equation}\label{eq:mom2}
	\begin{pmatrix}
	1 & y_{a_i^1}^{*} & y_{a_i^2}^{*}\\
	y_{a_i^1}^{*} & y_{a_i^2}^{*} & y_{a_i^3}^{*}\\
	y_{a_i^2}^{*} & y_{a_i^3}^{*} & y_{a_i^4}^{*}
	\end{pmatrix} \succeq 0
	\end{equation}
	of the moment matrix must be non-negative. Inserting $y_{a_i^2}^{*} = y_{a_i^4}^{*} = 1$ with $y_{a_i^1}^{*}=-1$ into \eqref{eq:mom2} yields a unique feasible $y_{a_i^3}^{*} = -1$. Thus, we write the moment of inertia in terms of the scaled cross-section areas \eqref{eq:scaled_a}. After inserting first-order moments, we obtain
	\begin{multline}
	I_i = 0.25 c_\mathrm{II} \overline{a}^2 \left( y_{a_i^2}^{*} + 2 y_{a_i^1}^{*} + 1\right) + \\
	0.125 c_\mathrm{III} \overline{a}^3 \left(y_{a_i^3}^{*} + 3 y_{a_i^2}^{*} + 3 y_{a_i^1}^{*} + 1\right) = 0.
	\end{multline}
\end{proof}

Using Proposition \ref{prop:inimage}, we can correct $\mathbf{c}$ based on $\mathbf{y}_{\mathbf{a}^1}^{*}$ to provide a feasible solution to \eqref{eq:nsdp}.

\begin{theorem}\label{th:feasible}
	Let $\mathbf{y}_{\mathbf{c}^1}^{*}$ and $\mathbf{y}_{\mathbf{a}^1}^{*}$ be the first-order moments associated with the variables $\mathbf{c}_\mathrm{s}$ and $\mathbf{a}_\mathrm{s}$ obtained from a solution to any relaxation of \eqref{eq:nsdpSC} using the moment-sum-of-squares hierarchy. Then,
	\begin{subequations}
	\begin{align}
	\tilde{a}_i &= 0.5 (y_{a_i^1}^{*} + 1) \overline{a}_i,\;&\forall i \in \{1\dots n_\mathrm{e}\},\\
	\tilde{c}_j &= \left[\mathbf{f}_j (\tilde{\mathbf{a}})\right]^\mathrm{T} \mathbf{K}_j^{\dagger}(\tilde{\mathbf{a}}) \mathbf{f}_j(\tilde{\mathbf{a}}), \;&\forall j \in \{1\dots n_\mathrm{lc}\}\label{eq:corrcompl}
	\end{align}
	\end{subequations}
	is feasible (upper-bound) to \eqref{eq:nsdp}.
\end{theorem}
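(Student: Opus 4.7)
My plan is to show that the pair $(\tilde{\mathbf{a}}, \tilde{\mathbf{c}})$ satisfies every constraint of \eqref{eq:nsdp} in turn, namely the area non-negativity \eqref{eq:nsdp_areas}, the volume bound \eqref{eq:nsdp_vol}, and the PMI \eqref{eq:pmi}. The ``upper-bound'' qualifier then follows automatically because any feasible point of a minimization problem upper-bounds its optimal value. The argument will rely only on Propositions \ref{prop:image}, \ref{prop:inimage}, and \ref{prop:feasiblemom}, together with the Schur-complement Lemma \ref{lemma:schur}, so no new computations are required.

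First I would dispatch the two easy constraints. By Proposition \ref{prop:feasiblemom}, the first-order moments satisfy $\left(y^*_{a_i^1}\right)^2 \le 1$ (or $\left(y^*_{a_i^1}\right)^4 \le 1$), hence $y^*_{a_i^1} \ge -1$, so $\tilde{a}_i = 0.5(y^*_{a_i^1}+1)\overline{a}_i \ge 0$, giving \eqref{eq:nsdp_areas}. For the volume bound, substituting $\overline{a}_i = \overline{V}/\ell_i$ and summing yields
\begin{equation}
\overline{V} - \bm{\ell}^\mathrm{T}\tilde{\mathbf{a}} = 0.5\,\overline{V}\bigl(2 - n_\mathrm{e} - \mathbf{1}^\mathrm{T}\mathbf{y}^*_{\mathbf{a}^1}\bigr) \ge 0,
\end{equation}
where the inequality is exactly \eqref{eq:fom1} in Proposition \ref{prop:feasiblemom}.

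The main step is the PMI \eqref{eq:pmi}, which I would verify through the three equivalent conditions of Lemma \ref{lemma:schur} with $\mathbf{A} = \tilde{c}_j$, $\mathbf{B} = -\mathbf{f}_j(\tilde{\mathbf{a}})$, and $\mathbf{C} = \mathbf{K}_j(\tilde{\mathbf{a}})$. Positive semidefiniteness $\mathbf{K}_j(\tilde{\mathbf{a}}) \succeq 0$ follows from the assembly \eqref{eq:assembly_K} since all coefficient matrices are PSD and $\tilde{\mathbf{a}} \ge \mathbf{0}$. The Schur-complement inequality $\tilde{c}_j - \mathbf{f}_j(\tilde{\mathbf{a}})^\mathrm{T}\mathbf{K}_j^\dagger(\tilde{\mathbf{a}})\mathbf{f}_j(\tilde{\mathbf{a}}) \ge 0$ holds with equality by the very definition \eqref{eq:corrcompl} of $\tilde{c}_j$. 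The range condition $(\mathbf{I} - \mathbf{K}_j(\tilde{\mathbf{a}})\mathbf{K}_j^\dagger(\tilde{\mathbf{a}}))\mathbf{f}_j(\tilde{\mathbf{a}}) = \mathbf{0}$ is, by Proposition \ref{prop:image}, equivalent to $\mathbf{f}_j(\tilde{\mathbf{a}}) \in \mathrm{Im}(\mathbf{K}_j(\tilde{\mathbf{a}}))$, and this is precisely the statement of Proposition \ref{prop:inimage}.

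The only genuinely nontrivial ingredient is the image inclusion, but this has already been established in Proposition \ref{prop:inimage}; the rest is an accounting argument. I would emphasize that the construction is consistent in that $\tilde{c}_j$ is defined intrinsically from $\tilde{\mathbf{a}}$ via the pseudo-inverse, so one need not rely on $\mathbf{y}^*_{\mathbf{c}^1}$ at all — a convenient fact, since the relaxed compliance values $0.5(y^*_{c_j^1}+1)\overline{c}/\omega_j$ typically lie \emph{below} the true compliance at $\tilde{\mathbf{a}}$, and replacing them by $\tilde{c}_j$ is precisely what turns a relaxation value into a bona fide upper bound for \eqref{eq:nsdp}.
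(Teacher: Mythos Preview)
Your proposal is correct and follows essentially the same approach as the paper's proof: invoke Proposition~\ref{prop:feasiblemom} for the area and volume constraints, and Proposition~\ref{prop:inimage} (combined with the Schur-complement Lemma~\ref{lemma:schur} and Proposition~\ref{prop:image}) for the PMI~\eqref{eq:pmi}. The paper's version is terser, simply stating that the PMI ``is satisfied due to Proposition~\ref{prop:inimage}'' without unpacking the three Schur conditions, whereas you spell each one out explicitly; the underlying logic is identical.
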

\begin{proof}
	Based on Proposition \ref{prop:feasiblemom}, $\tilde{\mathbf{a}}$ satisfies the constraints imposed on the cross-section areas. By correcting the compliance variables according to \eqref{eq:corrcompl}, the equilibrium equation (and so the PMI \eqref{eq:pmi}) is satisfied due to Proposition \ref{prop:inimage}. Consequently, all the constraints of \eqref{eq:nsdp} are feasible for the pair $\tilde{\mathbf{a}}$, $\tilde{\mathbf{c}}$, showing that $\bm{\omega}^\mathrm{T} \mathbf{c}^* \le \bm{\omega}^\mathrm{T} \tilde{\mathbf{c}} < \infty$.
\end{proof}

We wish to emphasize that in Theorem \ref{th:feasible}, we have proved feasibility of the upper bounds to \eqref{eq:nsdp} and such upper bounds may violate the compliance bound constraints \eqref{eq:compSumSC}. Thus, knowledge of $\bm{\omega}^\mathrm{T} \mathbf{c}^*$ does not assure convergence of the lowest relaxation to the optimal cross-section areas.

\subsection{Certificate of global $\varepsilon$-optimality}\label{sec:opt}

Because the hierarchy generates a sequence of lower bounds and we have just shown in Theorem \ref{th:feasible} how to compute upper bounds in each relaxation, we naturally arrive at a simple sufficient condition of global $\varepsilon$-optimality.

\begin{theorem}\label{th:suff}
	Let $\mathbf{y}_{\mathbf{c}^1}^{*}$ and $\mathbf{y}_{\mathbf{a}^1}^{*}$ be the first-order moments associated with the variables $\mathbf{c}_\mathrm{s}$ and $\mathbf{a}_\mathrm{s}$ obtained from a solution to any relaxation of \eqref{eq:nsdpSC} using the moment-sum-of-squares hierarchy. Then,
	\begin{equation}\label{eq:suff}
	\bm{\omega}^\mathrm{T}\tilde{\mathbf{c}} - \sum_{j=1}^{n_\mathrm{lc}}\left[ 0.5 (y_{c_j^1}^{*}+1) \overline{c} \right] \le \varepsilon
	\end{equation}
	is a sufficient condition of global $\varepsilon$-optimality.
\end{theorem}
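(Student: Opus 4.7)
The plan is to prove this as a straightforward sandwich argument: the left-hand quantity in \eqref{eq:suff} is exactly the gap between a valid upper bound and a valid lower bound for $f^* := \bm{\omega}^\mathrm{T} \mathbf{c}^*$, the optimum of \eqref{eq:nsdp} (equivalently of \eqref{eq:nsdpSC} via \eqref{eq:scaled_c}), so a bound of $\varepsilon$ on this gap immediately forces $\varepsilon$-optimality of the candidate $(\tilde{\mathbf{a}},\tilde{\mathbf{c}})$.

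First I would establish the lower bound. Since the objective \eqref{nsdpSC_obj} is \emph{linear} in $\mathbf{c}_\mathrm{s}$, its value at any feasible moment sequence of the $r$-th relaxation of \eqref{eq:nsdpSC} is exactly $\sum_{j=1}^{n_\mathrm{lc}} 0.5(y_{c_j^1}^{*}+1)\overline{c}$. By the general lower-bound property of the moment-sum-of-squares hierarchy (recall the discussion after \eqref{eq:truncmoment} and Theorem~\ref{th:convergence}, whose applicability is guaranteed by Proposition~\ref{prop:archimedean}), this value is bounded above by the scaled optimum of \eqref{eq:nsdpSC}, which in turn equals $f^*$ under the affine change of variables \eqref{eq:scaled_c}--\eqref{eq:scaled_a}. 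Hence
\begin{equation}
\sum_{j=1}^{n_\mathrm{lc}} 0.5(y_{c_j^1}^{*}+1)\overline{c} \;\le\; f^*.
\end{equation}

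Next I would invoke Theorem~\ref{th:feasible}, which certifies that $(\tilde{\mathbf{a}},\tilde{\mathbf{c}})$ is feasible for \eqref{eq:nsdp}; consequently $f^* \le \bm{\omega}^\mathrm{T} \tilde{\mathbf{c}}$. Chaining the two inequalities yields
\begin{equation}
\sum_{j=1}^{n_\mathrm{lc}} 0.5(y_{c_j^1}^{*}+1)\overline{c} \;\le\; f^* \;\le\; \bm{\omega}^\mathrm{T} \tilde{\mathbf{c}},
\end{equation}
so the hypothesis \eqref{eq:suff} gives both $\bm{\omega}^\mathrm{T}\tilde{\mathbf{c}} - f^* \le \varepsilon$ and $f^* - \sum_{j} 0.5(y_{c_j^1}^{*}+1)\overline{c} \le \varepsilon$, establishing global $\varepsilon$-optimality of $(\tilde{\mathbf{a}},\tilde{\mathbf{c}})$ with a computable certificate.

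There is no real obstacle here; the only subtle point worth stating explicitly is the first step above, namely that the value of a \emph{linear} objective at a feasible moment vector is read off directly from the first-order moments, so the quantity subtracted in \eqref{eq:suff} coincides with $f^{(r)}$ in the notation of \eqref{eq:truncmoment} and is therefore a genuine lower bound. All the nontrivial work (image/kernel compatibility needed to define $\tilde{\mathbf{c}}$, compactness, and Archimedeanity) has already been discharged in Propositions~\ref{prop:inimage}, \ref{prop:compact}, \ref{prop:archimedean} and Theorem~\ref{th:feasible}.
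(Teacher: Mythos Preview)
Your proposal is correct and matches the paper's approach. In fact, the paper does not spell out a proof of Theorem~\ref{th:suff} at all; it is stated as an immediate consequence of Theorem~\ref{th:feasible} (feasible upper bound) together with the lower-bound property $f^{(r)}\le f^*$ of the hierarchy, which is precisely the sandwich argument you wrote out in detail.
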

Theorem \ref{th:suff} is very simple to verify computationally, significantly simpler than the traditional rank-based certificate of global optimality \eqref{eq:rank}, e.g., \citep{Henrion_2006}. However, \eqref{eq:suff} fails to be a necessary condition. Indeed, the optimality gap $\varepsilon$ may remain strictly positive even when the hierarchy converged according to \eqref{eq:rank} in the case of multiple globally optimal solutions. Then, the optimal first-order moments $\mathbf{y}$ are not unique; for instance, they may correspond to any convex combination of the global optima, we refer to Section \ref{sec:failed} for a specific example. 

A stronger result holds, however, when the optimization problem possesses a~unique global optimum. To show this, we first prove that, with an increasing relaxation degree $r$, the feasible space of relaxations converges to the convex hull of the initial (non-convex) problem.
\begin{proposition}\label{prop:hull}
	Let $\mathcal{K}^{(r)}$ be the feasible set of the first-order moments in the $r$-th relaxation of the moment-sum-of-squares hierarchy of \eqref{eq:nsdpSC}. Then, $\mathcal{K}^{(r)} \uparrow \text{conv}(\mathcal{K})$ as $r\rightarrow \infty$, where $\mathcal{K}$ is the intersection of \eqref{eq:pmiSC}--\eqref{eq:boxSC}.
\end{proposition}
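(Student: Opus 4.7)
The plan is to sandwich $\mathcal{K}^{(r)}$ between $\mathrm{conv}(\mathcal{K})$ and a tightening outer approximation whose support functions are controlled by Theorem \ref{th:convergence}. For every $r$, I first show $\mathrm{conv}(\mathcal{K}) \subseteq \mathcal{K}^{(r)}$: any $\mathbf{x} \in \mathcal{K}$ generates a Dirac measure $\delta_{\mathbf{x}}$ whose complete moment sequence is feasible at every relaxation order and whose first-order component is $\mathbf{x}$ itself. Convex combinations of such Diracs, combined with Carath\'eodory's theorem, cover $\mathrm{conv}(\mathcal{K})$. The monotone nesting $\mathcal{K}^{(r+1)} \subseteq \mathcal{K}^{(r)}$ follows by truncating any order-$2(r+1)$ feasible moment sequence to order $2r$, which preserves the first-order entries and the semidefinite conditions at the lower level.

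The main step is the limiting inclusion $\bigcap_r \mathcal{K}^{(r)} \subseteq \mathrm{conv}(\mathcal{K})$, which I would establish via a support-function argument. By Proposition \ref{prop:compact} each $\mathcal{K}^{(r)}$ is closed, convex, and uniformly bounded. For any direction $\mathbf{c}$, the support-function value $h_{\mathcal{K}^{(r)}}(\mathbf{c})$ coincides with the optimum of the $r$-th Lasserre relaxation \eqref{eq:truncmoment} applied to the linear polynomial $\mathbf{c}^{\mathrm{T}}(\mathbf{a}_\mathrm{s}, \mathbf{c}_\mathrm{s})$, because only the first-order moments appear in the objective coefficient vector. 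Assumption \ref{ass:comp} holds by Proposition \ref{prop:archimedean}, so Theorem \ref{th:convergence} guarantees convergence of these values to $\sup_{\mathcal{K}} \mathbf{c}^{\mathrm{T}}(\cdot) = h_{\mathrm{conv}(\mathcal{K})}(\mathbf{c})$, using that a linear supremum is insensitive to taking the convex hull. The set $\bigcap_r \mathcal{K}^{(r)}$ is then a closed convex body whose support function equals $h_{\mathrm{conv}(\mathcal{K})}$, and by the classical bijection between closed convex sets and their support functions, it must coincide with $\mathrm{conv}(\mathcal{K})$, yielding the claimed monotone convergence.

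The main obstacle I anticipate is the final conversion of pointwise support-function convergence into monotone set convergence; my plan is to cite the standard convex-analysis result that a decreasing sequence of nonempty compact convex sets converges (in the Hausdorff sense) to a compact convex limit whose support function is the pointwise limit of the support functions, rather than to reprove it here. A smaller sanity check is to verify that for a degree-one polynomial objective only the first-order moments enter the moment problem, so that the $r$-th relaxation exactly computes $h_{\mathcal{K}^{(r)}}$ without spurious higher-order coupling, and that $\mathcal{K}$ is nonempty so the Dirac construction actually produces feasible moment sequences at every level.
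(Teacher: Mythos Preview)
Your proposal is correct and follows essentially the same route as the paper: both argue that Theorem~\ref{th:convergence} applied to an arbitrary affine objective forces the support functions of $\mathcal{K}^{(r)}$ to converge to that of $\mathrm{conv}(\mathcal{K})$. Your version simply spells out the details---the Dirac-measure inclusion $\mathrm{conv}(\mathcal{K})\subseteq\mathcal{K}^{(r)}$, the monotone nesting, and the support-function bijection---that the paper's very terse proof leaves implicit.
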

\begin{proof}
	Let $f(\mathbf{a}_\mathrm{s},\mathbf{c}_\mathrm{s})$ be an arbitrary affine function. Based on Proposition \ref{prop:archimedean}, Assumption \ref{ass:comp} holds for \eqref{eq:nsdpSC} independently of the objective function. Hence, optimization of $f(\mathbf{a}_\mathrm{s}, \mathbf{c}_\mathrm{s})$ over $\mathcal{K}$ yields $f(\mathbf{a}_\mathrm{s},\mathbf{c}_\mathrm{s}) \uparrow f^*(\mathbf{a}_\mathrm{s}, \mathbf{c}_\mathrm{s})$ as $r\rightarrow \infty$ due to Theorem \ref{th:convergence}. Because $f(\mathbf{a}_\mathrm{s}, \mathbf{c}_\mathrm{s})$ is arbitrary, $\mathcal{K}^{(r)} \uparrow \text{conv}(\mathcal{K})$ as $r\rightarrow \infty$.
\end{proof}

Finally, we can prove that the hierarchy eventually attains a~zero optimality gap.
\begin{theorem}\label{th:zero}
	If there is a unique global solution to \eqref{eq:nsdpSC}, then
	\begin{equation}
	\bm{\omega}^\mathrm{T}\tilde{\mathbf{c}} - \sum_{j=1}^{n_\mathrm{lc}}\left[ 0.5 (y_{c_j^1}^{(r)*}+1) \overline{c} \right] = 0
	\end{equation}
	as $r\rightarrow \infty$.
\end{theorem}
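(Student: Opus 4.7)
}
The plan is to combine Proposition \ref{prop:hull} with the uniqueness hypothesis to force the first-order moments from the hierarchy to converge to the single global optimum, then transfer this convergence to the corrected upper bound $\tilde{\mathbf{c}}$ via continuity of the compliance functional at the optimum.

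First, I would argue that the uniqueness of the minimizer of the linear objective over the nonconvex set $\mathcal{K}$ is preserved on $\mathrm{conv}(\mathcal{K})$. Indeed, any minimizer $\mathbf{z} \in \mathrm{conv}(\mathcal{K})$ can be written as a finite convex combination $\mathbf{z}=\sum_i \lambda_i \mathbf{x}_i$ with $\mathbf{x}_i \in \mathcal{K}$ and $\lambda_i>0$; since every $\mathbf{x}_i$ must then also achieve the minimum over $\mathcal{K}$, uniqueness forces $\mathbf{x}_i=(\mathbf{a}_\mathrm{s}^*,\mathbf{c}_\mathrm{s}^*)$ for all $i$, hence $\mathbf{z}=(\mathbf{a}_\mathrm{s}^*,\mathbf{c}_\mathrm{s}^*)$. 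By Proposition \ref{prop:hull}, $\mathcal{K}^{(r)}\uparrow\mathrm{conv}(\mathcal{K})$, so the sequence of first-order moment vectors $\mathbf{y}^{(r)*}$, each minimizing the same linear objective over $\mathcal{K}^{(r)}$, is bounded (since the relaxed feasible sets are bounded by Proposition~\ref{prop:compact}) and every convergent subsequence has limit point in $\mathrm{conv}(\mathcal{K})$ that minimizes the objective there. By the preceding uniqueness, every such limit point is $(\mathbf{a}_\mathrm{s}^*,\mathbf{c}_\mathrm{s}^*)$, so the whole sequence converges. Unscaling via \eqref{eq:scaled_a}--\eqref{eq:scaled_c} yields $\tilde{\mathbf{a}}^{(r)}\to\mathbf{a}^*$.

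Second, I transfer the convergence to $\tilde{\mathbf{c}}^{(r)}$. By Theorem \ref{th:feasible}, the pair $(\tilde{\mathbf{a}}^{(r)},\tilde{\mathbf{c}}^{(r)})$ is feasible for \eqref{eq:nsdp}, so $\bm{\omega}^\mathrm{T}\tilde{\mathbf{c}}^{(r)}\ge f^*$ for every $r$. For the opposite asymptotic inequality, I would invoke continuity of the map $\mathbf{a}\mapsto\mathbf{f}_j(\mathbf{a})^\mathrm{T}\mathbf{K}_j(\mathbf{a})^\dagger\mathbf{f}_j(\mathbf{a})$ at $\mathbf{a}^*$ along sequences respecting the image condition $\mathbf{f}_j(\mathbf{a})\in\mathrm{Im}(\mathbf{K}_j(\mathbf{a}))$. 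In the ground-structure setting, this follows because, for $r$ large enough, the active support of $\tilde{\mathbf{a}}^{(r)}$ stabilizes on that of $\mathbf{a}^*$, reducing the pseudo-inverse to an ordinary inverse on the corresponding stiffness block where matrix inversion depends continuously on $\mathbf{a}$. Combined with continuity of $\mathbf{f}_j(\cdot)$, this gives $\tilde{\mathbf{c}}^{(r)}\to\mathbf{c}^*$, and hence $\bm{\omega}^\mathrm{T}\tilde{\mathbf{c}}^{(r)}\to\bm{\omega}^\mathrm{T}\mathbf{c}^*=f^*$. On the other hand, the lower bound $\sum_{j=1}^{n_\mathrm{lc}}0.5(y_{c_j^1}^{(r)*}+1)\overline{c}$ already tends to $f^*$ by Theorem \ref{th:convergence} (applicable thanks to Proposition~\ref{prop:archimedean}), so the gap in \eqref{eq:suff} vanishes.

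The main obstacle is the continuity argument in the second step: the Moore--Penrose pseudo-inverse is known to be discontinuous across rank changes, and one must rule out that $\tilde{\mathbf{a}}^{(r)}$ approaches $\mathbf{a}^*$ along a path where the rank of $\mathbf{K}_j(\tilde{\mathbf{a}}^{(r)})$ persistently differs from that of $\mathbf{K}_j(\mathbf{a}^*)$. The cleanest way around this is a support-stabilization argument in the limit, possibly combined with a sandwich between $f^*$ and $f^{(r)}$ using the already-established lower-semicontinuity of compliance on the feasible set; should this stabilization fail, one would have to argue by lower semi-continuity of compliance and squeeze $\bm{\omega}^\mathrm{T}\tilde{\mathbf{c}}^{(r)}$ between $f^*$ and the lower-bound sequence directly, which again collapses the gap by Theorem~\ref{th:convergence}.
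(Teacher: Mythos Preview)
Your first step---showing that uniqueness of the minimizer on $\mathcal{K}$ transfers to $\mathrm{conv}(\mathcal{K})$ via a convex-combination argument, and then invoking Proposition~\ref{prop:hull} together with a bounded-subsequence extraction---is exactly the paper's strategy. The paper phrases the uniqueness transfer slightly differently, writing $\mathrm{conv}(\mathcal{K})$ as the convex hull of ``limit points'' $\mathbf{d}_1,\mathbf{d}_2,\dots$ of the compact set $\mathcal{K}$ and observing that a unique minimizer on $\mathcal{K}$ forces a unique such $\mathbf{d}^*$ to achieve the minimum; your decomposition $\mathbf{z}=\sum_i\lambda_i\mathbf{x}_i$ with $\mathbf{x}_i\in\mathcal{K}$ is the same idea made explicit.

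Your second step, however, goes beyond the paper. The published proof stops once the first-order moments are pinned to the unique optimum $(\mathbf{a}_\mathrm{s}^*,\mathbf{c}_\mathrm{s}^*)$ and does not separately argue that the corrected upper bound $\bm{\omega}^\mathrm{T}\tilde{\mathbf{c}}^{(r)}$ converges to $f^*$; in particular, it does not mention the pseudo-inverse discontinuity that you correctly flag as the main obstacle. Your support-stabilization or sandwich sketch is precisely what would be needed to close this gap rigorously, so on this point your plan is more careful than the paper's own argument.
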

\begin{proof}
	Assuming $r \rightarrow \infty$, optimization of \eqref{nsdpSC_obj} over $\mathcal{K}^{(r)}$ is equivalent to optimization of \eqref{nsdpSC_obj} over $\text{conv}(\mathcal{K})$ by Proposition \ref{prop:hull}. Because $\mathcal{K}$ is compact, its convex hull must be also compact. Hence, it can be equivalently expressed as the convex hull of the limit points of $\mathcal{K}$ that are denoted by $\mathbf{d}_1, \mathbf{d}_2, \dots$, i.e.,
	\begin{equation}
	\text{conv}(\mathcal{K}) = \text{conv}(\cup_{i=1}^{\infty} \{\mathbf{d}_i\}).
	\end{equation}
	Because we assume there is the unique global optimum when optimizing over $\mathcal{K}$, there must be a unique limit point $\mathbf{d}^*$ associated with this optimum.
\end{proof}

\begin{remark}
	Although Theorem \ref{th:zero} relies on $r \rightarrow \infty$, a finite (and fairly small) $r$ is required in all our test cases to reach the zero optimality gap. Moreover, this bound equality has occurred when the hierarchy converged based on the rank test \eqref{eq:rank}. It might be possible, therefore, to strengthen Theorem \ref{th:zero} to a finite termination result.
\end{remark}

\subsection{Global topology optimization of shell structures}\label{sec:shells}

Until now, solely frame structures have been considered. However, the optimization formulations \eqref{eq:nsdp} and \eqref{eq:nsdpSC} allow for simple modifications to optimize other discrete structures such as shells. Let $\mathbf{t} \in \mathbb{R}_{\ge 0}^{n_\mathrm{e}}$ be the vector of shell element thicknesses. Then, the formulation \eqref{eq:nsdp} becomes
\begin{subequations}\label{eq:nsdpSH}
	\begin{alignat}{5}
	\min_{\mathbf{t}, \mathbf{c}}\; && \bm{\omega}^\mathrm{T} \mathbf{c} \qquad\qquad\qquad\;\;\label{eq:nsdp_objSH}\\
	\mathrm{s.t.}\;\; && 
	\begin{pmatrix}
	c_j & -\mathbf{f}_j(\mathbf{t})^\mathrm{T}\\
	-\mathbf{f}_j(\mathbf{t}) & \mathbf{K}_j(\mathbf{t})
	\end{pmatrix}\;&&\succeq&&\; 0,&&\; \forall j \in \{1\dots n_\mathrm{lc}\},\hspace{-4mm}\label{eq:pmiSH}\\
	&& \overline{V} - \mathbf{s}^\mathrm{T} \mathbf{t} \;&&\ge&&\;0,\label{eq:nsdp_volSH}\\
	&& \mathbf{t}\;&&\ge&&\; \mathbf{0},\label{eq:nsdp_areasSH}
	\end{alignat}
\end{subequations}
where $\mathbf{s} \in \mathbb{R}_{>0}^{n_\mathrm{e}}$ is a vector of the surface areas of individual shell elements, and $\mathbf{K}_j(\mathbf{t})$ is assembled as
\begin{equation}
\mathbf{K}_j (\mathbf{t}) = \mathbf{K}_{j,0} + \sum_{i=1}^{n_\mathrm{e}}\left[ \mathbf{K}_{j,i}^{(1)} t_i + \mathbf{K}_{j,i}^{(3)} t_i^3 \right].
\end{equation}
Because the design variables can be bounded very similarly to Proposition \ref{prop:bounds} and scaled, all proven results hold true.

\begin{figure*}[t]
	\centering
	\begin{subfigure}{0.15\linewidth}
		\begin{tikzpicture}
		\centering
		\scaling{1.25}
		\point{a}{0.000000}{0.000000}
		\notation{1}{a}{\circled{$1$}}[below right=0mm]
		\point{b}{1.000000}{1.000000}
		\notation{1}{b}{\circled{$2$}}[below=1mm]
		\point{c}{0.000000}{2.000000}
		\notation{1}{c}{\circled{$3$}}[above right=0mm]
		\beam{2}{a}{b}
		\notation{4}{a}{b}[$1$]
		\beam{2}{b}{c}
		\notation{4}{b}{c}[$2$]
		\support{3}{a}[270]
		\support{3}{c}[270]
		\point{d1}{0.000000}{-0.500000}
		\point{d2}{1.000000}{-0.500000}
		\dimensioning{1}{d1}{d2}{-1.000000}[$1$]
		\point{e1}{1.250000}{0.000000}
		\point{e2}{1.250000}{1.000000}
		\dimensioning{2}{e1}{e2}{-0.7500000}[$1$]
		\point{e3}{1.250000}{2.000000}
		\dimensioning{2}{e2}{e3}{-0.75000000}[$1$]
		\load{1}{b}[0][-1.0][0.0]
		\notation{1}{b}{$1$}[above=10mm]
		\load{1}{b}[90][1.0][0.0]
		\notation{1}{b}{$1$}[left=9mm]
		\end{tikzpicture}
		\caption{}
	\end{subfigure}%
	\hfill\begin{subfigure}{0.1\linewidth}
		\vspace{9.6mm}
		\centering
		\begin{tikzpicture}[scale=0.75]
		\scaling{0.15}
		\point{a}{0}{0};
		\point{b}{5}{0};
		\point{c}{5}{10};
		\draw[black, fill=gray, fill opacity=0.2] (0.0,0.0) rectangle ++(1,2);
		\dimensioning{1}{a}{b}{-0.75}[$0.3$];
		\dimensioning{2}{b}{c}{1.75}[$h_i$];
		\end{tikzpicture}
		\vspace{11mm}
		\caption{}
	\end{subfigure}%
	\hfill\begin{subfigure}{0.17\linewidth}
		\vspace{4mm}\includegraphics[width=\linewidth]{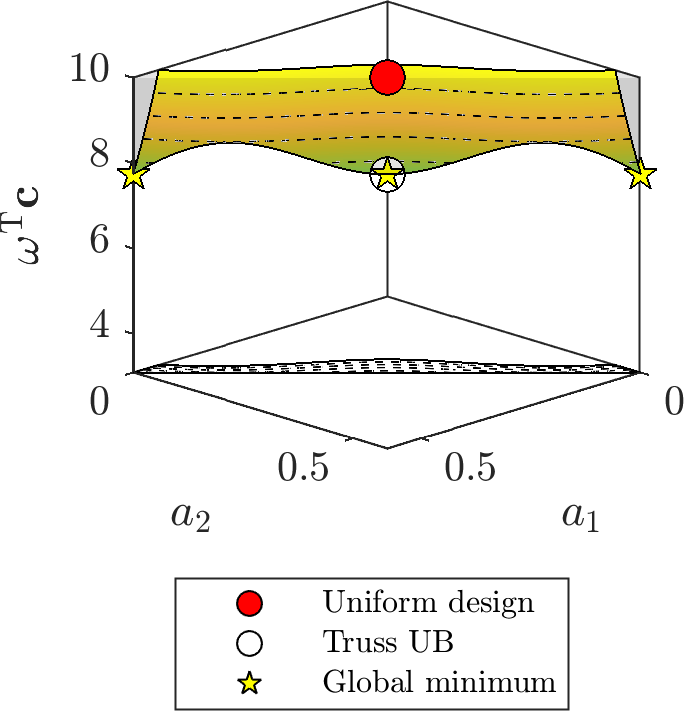}
		\vspace{4.5mm}
		\caption{}
	\end{subfigure}%
	\hfill\begin{subfigure}{0.17\linewidth}
		\vspace{4mm}\includegraphics[width=\linewidth]{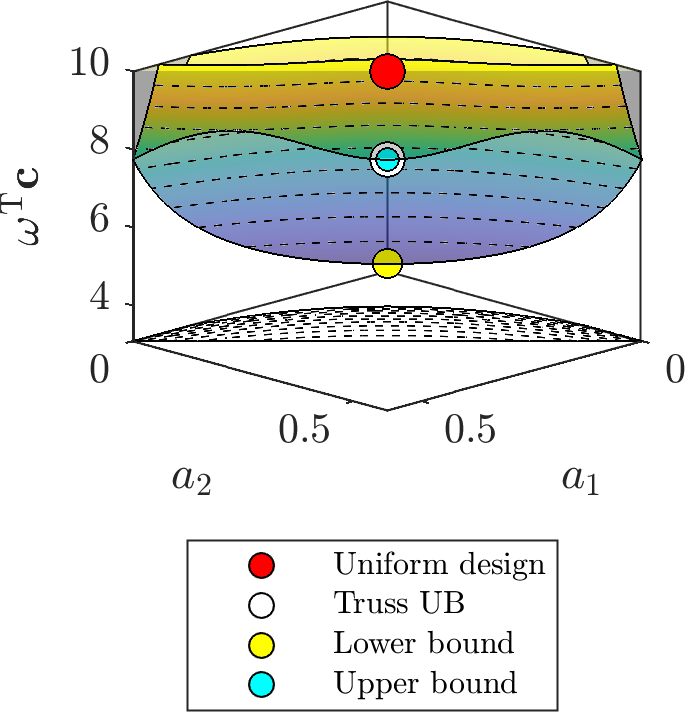}
		\vspace{4.5mm}
		\caption{}
	\end{subfigure}
	\hfill\begin{subfigure}{0.17\linewidth}
		\vspace{4mm}\includegraphics[width=\linewidth]{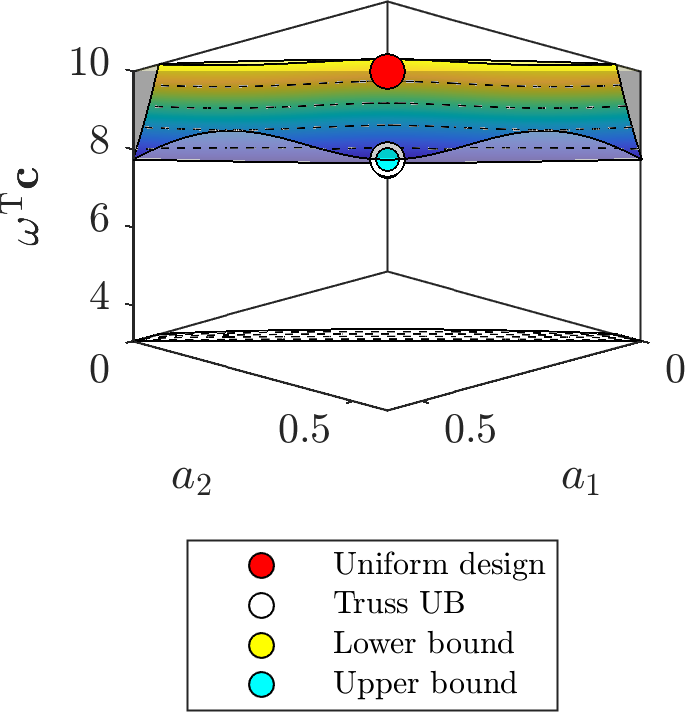}
		\vspace{4.5mm}
		\caption{}
	\end{subfigure}%
	\hfill\begin{subfigure}{0.17\linewidth}
		\vspace{4mm}\includegraphics[width=\linewidth]{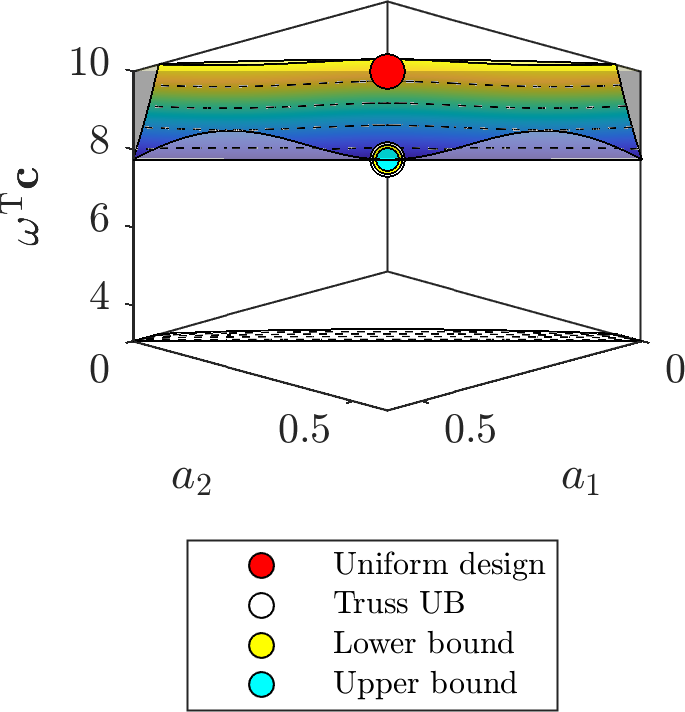}
		\vspace{4.5mm}
		\caption{}
	\end{subfigure}
	\caption{Frame structure composed of two elements: (a) boundary conditions, (b) the cross-section parametrization, and the sub-level set $\bm{\omega}^\mathrm{T} \mathbf{c}\le 10$ of the (c) feasible space and of the (d) second, (e) third, and (f) fourth outer approximations with the associated lower- and upper-bounds. Variables $a_1$ and $a_2$ stand for the cross-section areas of the two elements and $\bm{\omega}^\mathrm{T} \mathbf{c}$ denotes the corresponding weighted compliance of the two load cases (assuming the moments of inertia $I_i = 25/27 a_i^3$, $i \in \{1,2\}$), and $h_i$ is the cross-section height.}
	\label{fig:mg}
\end{figure*}

\section{Sample problems}\label{sec:examples}

This section investigates global topology optimization of selected small-scale structural design problems using the proposed strategy solved numerically by the \textsc{Mosek} optimizer \citep{mosek}. These examples demonstrate strengths and weaknesses of the presented approach: certificate of global $\varepsilon$-optimality using Theorem \ref{th:suff}, extraction\footnote{For rank computation we considered the eigenvalues with the absolute value smaller than $10^{-8}$ to be singular.} of all guaranteed globally optimal solutions based on the flat extension theorem \citep{Curto_1996}, but also higher computational demands when compared to selected local optimization techniques: OC and MMA adopting the nested approach, see, e.g., \citep{Bendsoe_2004}, \textsc{Matlab}'s inbuilt optimizer \texttt{fmincon} solving \eqref{eq:original} directly, and non-linear semidefinite programming (NSDP) formulation \eqref{eq:nsdp} solved by the \textsc{Penlab} optimizer \citep{Fiala2013}. Except for the nested approaches, all optimization problems were modeled using the \textsc{Yalmip} toolbox \citep{Lofberg2004}. Our implementation and the corresponding source codes written in \textsc{Matlab} can be accessed at \citep{tyburec_marek_2020_4048828}.

The first three examples involve two finite elements only to allow visualization of the feasible sets and provide intuition about the solution approach. In the later part, we investigate the influence of finite element types on optimal design and increase the number of elements to evaluate scalability of the approach. All computations were performed on a personal laptop with $16$~GB of RAM and Intel\textsuperscript{\textregistered} Core\texttrademark~i5-8350U CPU. Times of individual optimization approaches are measured to allow a simple comparison of the computational demands.

\subsection{Structure possessing multiple global optima}\label{sec:multiple}
As the first problem, we consider a frame structure composed of two Euler-Bernoulli frame elements, see Fig.~\ref{fig:mg}a. Two loads are applied, each of them acting as a separate load case, and weighted equally by $\bm{\omega} = \mathbf{1}$. Both these frame elements posses the Young modulus $E = 1$, and their overall volume is bounded by $\overline{V}=0.816597322$ from above\footnote{Fewer digits may prevent the solver from reaching all three global optima. Although an analytical formula for this specific $\overline{V}$ can be derived, we omit it for the sake of brevity.}. Accordingly with Fig.~\ref{fig:mg}b, the elements $i = \{1,2\}$ have rectangular cross-sections with areas $a_i = 0.3h_i$. Then, $I_i = \frac{1}{40}h^3$, which implies that $c_\mathrm{II} = 0$ and $c_\mathrm{III} = 25/27$ in Eq.~\eqref{eq:inertia}.

The feasible domain of the optimization problem shown in Fig.~\ref{fig:mg}c reveals that there are three global optima of the objective function value $7.738$, corresponding to the following cases: (i) $a_1^* = \overline{V}/\sqrt{2}$ and $a_2^* = 0$, (ii) $a_1^* = 0$ and $a_2^* = \overline{V}/\sqrt{2}$, and (iii) $a_1^* = a_2^* = \overline{V} \sqrt{2}/4$. All these solutions are extracted in the fourth relaxation of the moment-sum-of-squares hierarchy (Fig. \ref{fig:mg}f), which converged based on the rank condition \eqref{eq:rank} with the rank equal to $s=3$ and also based on Theorem~\ref{th:suff}, $\varepsilon = 6 \times 10^{-9}$. Notice that in all the relaxations, the upper bounds recovered by Theorem~\ref{th:feasible} are global minima, Figs.~\ref{fig:mg}d-\ref{fig:mg}f.

Because all local minima are also global, all tested optimization algorithms converge to the optimal objective function value, see Table \ref{tab:multiple2}. Among these algorithms, OC and MMA exhibited the best performance in terms of computational time.

\begin{table}[!b]
	\centering\setlength{\tabcolsep}{5pt}
	\begin{tabular}{lccccc}
		method & $a_1$ & $a_2$ & LB & UB & time [s]\\ 
		\hline
		OC &  $0.289$ & $0.289$ & - & $7.738$ & $0.009$ \\
		MMA & $0.289$ & $0.289$ & - & $7.738$ & $0.011$ \\
		\texttt{fmincon} & $0.289$ & $0.289$ & - & $7.738$ & $0.113$\\
		NSDP & $0.289$ & $0.289$ & - & $7.738$ & $0.409$\\
		PO$^{(2)}$, Th.~\ref{th:suff} & $0.289$ & $0.289$ & $5.065$ & $7.738$ & $0.023$\\
		PO$^{(3)}$, Th.~\ref{th:suff} & $0.289$ & $0.289$ & $7.647$ & $7.738$ & $0.071$\\
		PO$^{(4)}$, Th.~\ref{th:suff} & $0.289$ & $0.289$ & $7.738$ & $7.738$ & $0.438$
	\end{tabular}
\caption{Different optimization methods applied to the first optimization problem. LB denotes lower bound, UB abbreviates feasible upper bounds, and PO stands for polynomial optimization. The entries $a_i$ denote cross-section areas of the $i$-th element, or the areas constructed from the first-order moments in the case of PO.}
\label{tab:multiple2}
\end{table}

\subsection{Irreducible positive optimality gap}\label{sec:failed}

Let us now modify the optimization problem described in the preceding section by fixing the volume bound to some $\overline{V} \in (0.816597322, 2.73603242)$. Whilst the boundary points of this open interval match the cases when three global optima occur, the interval interior removes $a_1 = a_2 = \overline{V} \sqrt{2}/4$ from the set of globally optimal solutions. In what follows, we set $\overline{V}$ to the center of the interval.

\begin{figure}[!b]
	\centering
	\begin{subfigure}{0.33\linewidth}
		\includegraphics[width=\linewidth]{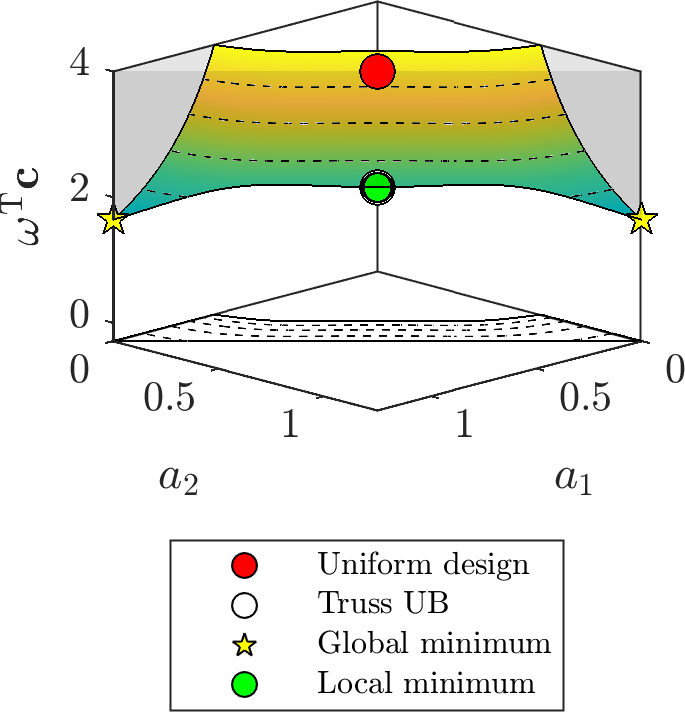}
		\caption{}
	\end{subfigure}%
	\hfill\begin{subfigure}{0.33\linewidth}
		\includegraphics[width=\linewidth]{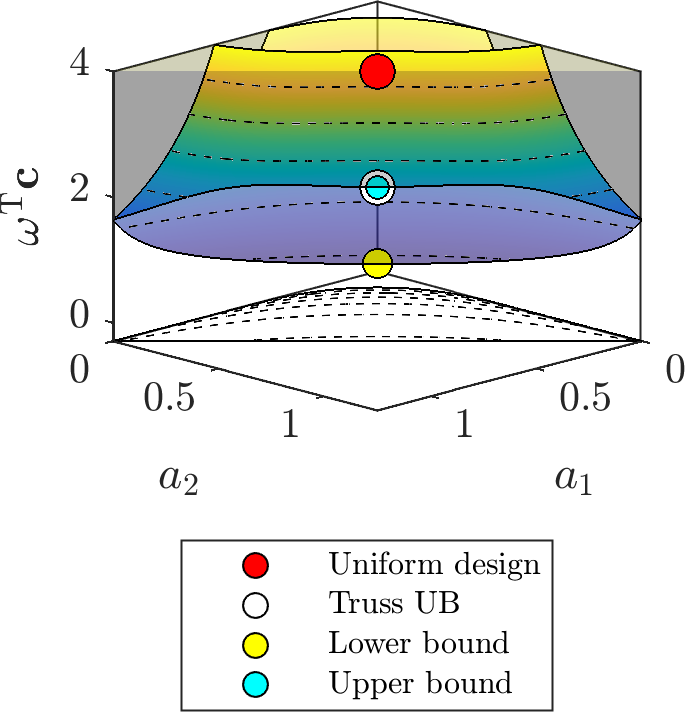}
		\caption{}
	\end{subfigure}%
	\hfill\begin{subfigure}{0.33\linewidth}
		\includegraphics[width=\linewidth]{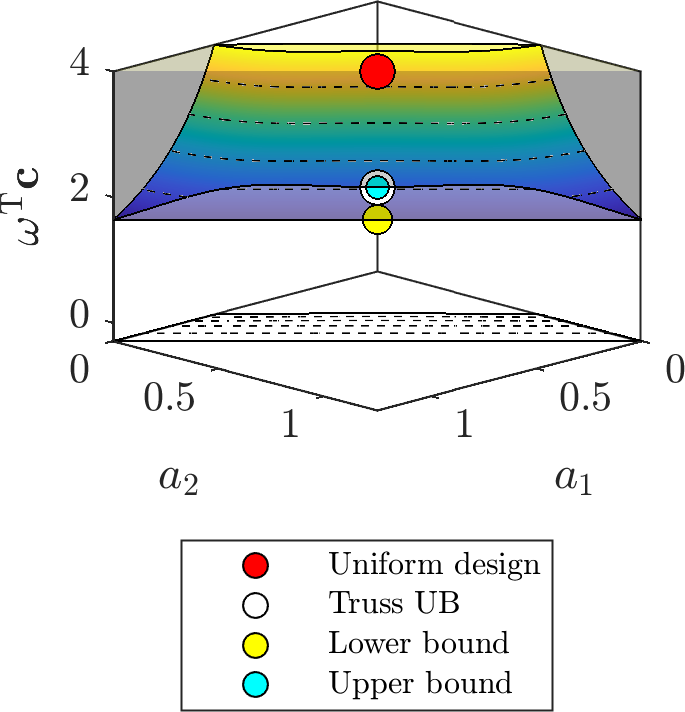}
		\caption{}
	\end{subfigure}
	\caption{Frame structure possessing a~non-zero optimality gap. The sub-level set $\bm{\omega}^\mathrm{T} \mathbf{c}\le 4$ of the (a) feasible space and of the (b) second, and (c) third outer approximations with the associated lower- and upper-bounds. Variables $a_1$ and $a_2$ stand for the cross-section areas of the two elements and $\bm{\omega}^\mathrm{T} \mathbf{c}$ denotes the corresponding weighted compliance of the two load cases (assuming the moments of inertia $I_i = 25/27 a_i^3$, $i \in \{1,2\}$).}
	\label{fig:sufficient}
\end{figure}

\begin{table}[!t]
	\centering\setlength{\tabcolsep}{5pt}
	\begin{tabular}{lccccc}
		method & $a_1$ & $a_2$ & LB & UB & time [s]\\ 
		\hline
		OC &  $0.628$ & $0.628$ & - & $2.161$ & $0.003$ \\
		MMA & $0.628$ & $0.628$ & - & $2.161$ & $0.004$ \\
		\texttt{fmincon} & $0.628$ & $0.628$ & - & $2.161$ & $0.049$\\
		NSDP & $0.628$ & $0.628$ & - & $2.161$ & $0.200$\\
		PO$^{(2)}$, Th.~\ref{th:suff} & $0.628$ & $0.628$ & $0.936$ & $2.161$ & $0.022$\\
		PO$^{(3)}$, Th.~\ref{th:suff} & $0.628$ & $0.628$ & $1.640$ & $2.161$ & $0.063$\\
		\multirow{2}{*}{PO$^{(3)}$, Eq.~\eqref{eq:rank}} & $1.256$ & $0.000$ & $1.640$ & $1.640$ & $0.063$\\
		& $0.000$ & $1.256$ & $1.640$ & $1.640$ & $0.063$
	\end{tabular}
	\caption{Different optimization methods applied to the second optimization problem. LB denotes lower bound, UB abbreviates feasible upper bounds, and PO stands for polynomial optimization. The entries $a_i$ denote cross-sectional areas of the $i$-th element, or the areas constructed from the first-order moments in the case of PO.}
	\label{tab:sufficient}
\end{table}

\begin{figure*}[b]
	\centering
	\begin{subfigure}{0.15\linewidth}
		\begin{tikzpicture}
		\centering
		\scaling{1.25}
		\point{a}{0.000000}{0.000000}
		\notation{1}{a}{\circled{$1$}}[below right=0mm]
		\point{b}{1.000000}{1.000000}
		\notation{1}{b}{\circled{$2$}}[below=1mm]
		\point{c}{0.000000}{2.000000}
		\notation{1}{c}{\circled{$3$}}[above right=0mm]
		\beam{2}{a}{b}
		\notation{4}{a}{b}[$1$]
		\beam{2}{b}{c}
		\notation{4}{b}{c}[$2$]
		\support{3}{a}[270]
		\support{3}{c}[270]
		\point{d1}{0.000000}{-0.500000}
		\point{d2}{1.000000}{-0.500000}
		\dimensioning{1}{d1}{d2}{-1.000000}[$1$]
		\point{e1}{1.250000}{0.000000}
		\point{e2}{1.250000}{1.000000}
		\dimensioning{2}{e1}{e2}{-0.7500000}[$1$]
		\point{e3}{1.250000}{2.000000}
		\dimensioning{2}{e2}{e3}{-0.75000000}[$1$]
		\load{1}{b}[0][-1.0][0.0]
		\notation{1}{b}{$1$}[above=10mm]
		\load{1}{b}[90][1.0][0.0]
		\notation{1}{b}{$1$}[left=9mm]
		\end{tikzpicture}
		\caption{}
	\end{subfigure}%
	\hfill\begin{subfigure}{0.1\linewidth}
		\vspace{4mm}
		\centering
		\begin{tikzpicture}[scale=1]
		\scaling{0.2}
		\point{a}{0}{0};
		\point{a1}{0}{1};
		\point{a2}{0}{9};
		\point{a3}{0}{10};
		\point{b}{5}{0};
		\point{b1}{5}{1};
		\point{b2}{5}{9};
		\point{b3}{5}{10};
		\point{e1}{2}{1};
		\point{e2}{2}{9};
		\point{f1}{3}{1};
		\point{f2}{3}{9};
		\point{c}{5}{10};
		\point{A1}{1}{0.5};
		\point{A2}{2.5}{2.5};
		\point{A}{1}{4};
		\point{B}{-1}{4};
		\draw (B) -- node[above]{$t_{\mathrm{p},i}$} (A);
		\draw[black, fill=gray, fill opacity=0.2] (a)--(b)--(b1)--(f1)--(f2)--(b2)--(b3)--(a3)--(a2)--(e2)--(e1)--(a1)--(a);
		\draw [{Latex}-](A1) -- (A);
		\draw [{Latex}-](A2) -- (A);
		\dimensioning{1}{a}{b}{-0.75}[$5 t_{\mathrm{p},i}$];
		\dimensioning{2}{b}{c}{1.5}[$10 t_{\mathrm{p},i}$];
		\end{tikzpicture}
		\vspace{4.6mm}
		\caption{}
	\end{subfigure}%
	\hfill\begin{subfigure}{0.17\linewidth}
		\vspace{4mm}\includegraphics[width=\linewidth]{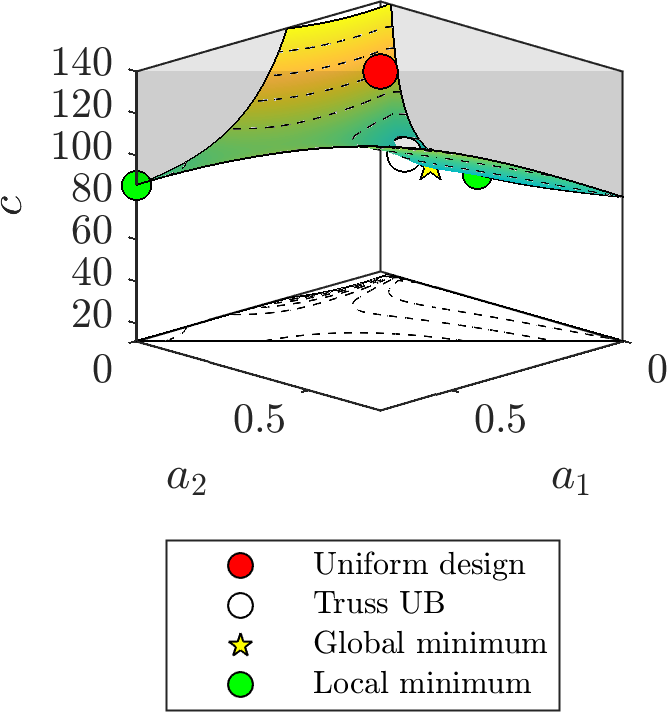}
		\vspace{4mm}
		\caption{}
	\end{subfigure}%
	\hfill\begin{subfigure}{0.17\linewidth}
		\vspace{4mm}\includegraphics[width=\linewidth]{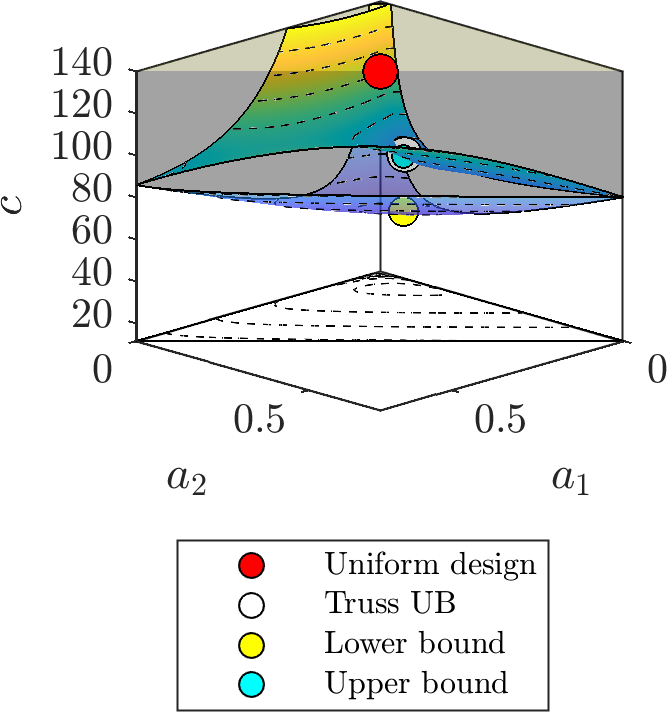}
		\vspace{4mm}
		\caption{}
	\end{subfigure}
	\hfill\begin{subfigure}{0.17\linewidth}
		\vspace{4mm}\includegraphics[width=\linewidth]{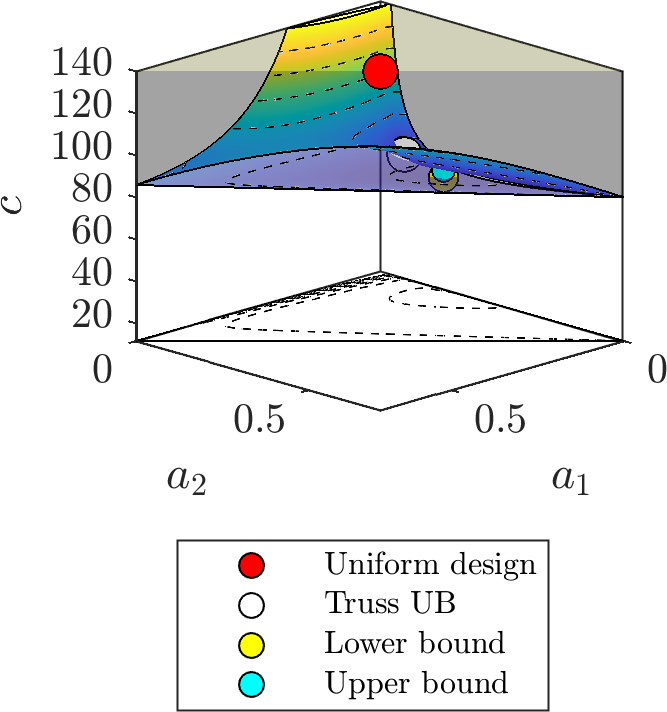}
		\vspace{4mm}
		\caption{}
	\end{subfigure}%
	\hfill\begin{subfigure}{0.17\linewidth}
		\vspace{4mm}\includegraphics[width=\linewidth]{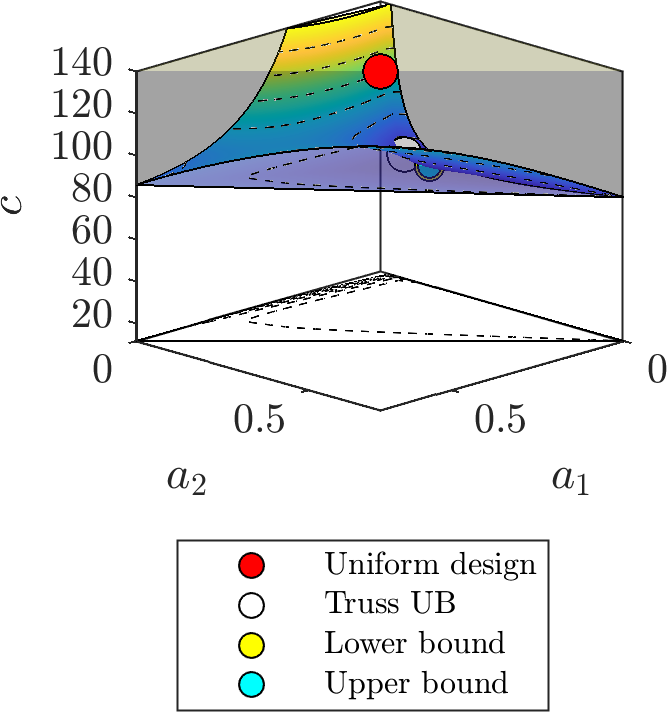}
		\vspace{4mm}
		\caption{}
	\end{subfigure}
	\caption{Frame structure with self-weight: (a) boundary conditions, (b) the cross-section parametrization, and the sub-level set $c\le 140$ of the (c) feasible space and of the (d) first, (e) second, and (f) third outer approximations with the associated lower- and upper-bounds. Variables $a_1$ and $a_2$ stand for the cross-section areas of the two elements and $c$ denotes the corresponding compliance (assuming the moments of inertia $I_i = 41/54 a_i^2$, $i \in \{1,2\}$), and $t_{\mathrm{p},i}$ stands for the flanges and web thickness.}
	\label{fig:sw}
\end{figure*}

Solving this modified optimization problem with the moment-sum-of-squares hierarchy produces the sequence of lower- and upper-bounds shown in Fig.~\ref{fig:sufficient}. Although the hierarchy exhibited a finite convergence based on the rank condition \eqref{eq:rank} with $s=2$, the corresponding optimality gap remains strictly positive ($\varepsilon = 0.521$) and cannot be reduced in the subsequent relaxations. Clearly, all outer convex approximations must contain the convex combination of their limit points. Hence, if the limit points denote the global optima, also their convex combinations attain the globally optimal objective function value. Therefore, they are also optimal for the associated relaxation, but may lack feasibility to the original problem. Depending on the optimization algorithm and its settings, one can either reach a lower bound that is actually feasible for the original problem (as was the case in Section \ref{sec:multiple}), i.e., a zero optimality gap, or a~positive optimality gap that cannot be further reduced, which is the case here.

For this particular problem, all local optimization techniques, using their default starting points and settings, missed the global optima, see Table~\ref{tab:sufficient}. In fact, they approached the feasible upper-bound that was provided by Theorem \ref{th:feasible}.

\subsection{Frame structure with self-weight}

For the previous examples, $\mathbf{f}(\mathbf{a}) = \mathbf{f}$ was constant, so that the optimum designs utilized the entire available volume $\overline{V}$. In these cases, the volume inequality constraint could have been changed to equality, and, therefore, one design variable eliminated. However, such a~procedure cannot be applied when design-dependent loads are present.

\begin{table}[t]
	\centering\setlength{\tabcolsep}{5pt}
	\begin{tabular}{lccccc}
		method & $a_1$ & $a_2$ & LB & UB & time [s]\\ 
		\hline
		OC &  $0.022$ & $0.166$ & - & $70.442$ & $1.129$\\
		MMA &  $0.022$ & $0.166$ & - & $70.442$ & $0.935$\\
		NSDP & $0.707$ & $0.000$ & - & $85.846$ & $1.448$\\
		PO$^{(1)}$, Th.~\ref{th:suff} & $0.050$ & $0.119$ & $48.246$ & $74.171$ & $0.006$\\
		PO$^{(2)}$, Th.~\ref{th:suff} & $0.034$ & $0.220$ & $68.328$ & $71.594$ & $0.015$\\
		PO$^{(3)}$, Th.~\ref{th:suff} & $0.022$ & $0.166$ & $70.442$ & $70.442$ & $0.058$
	\end{tabular}
	\caption{Different optimization methods applied to the optimization problem with self-weight. LB denotes lower bound, UB abbreviates feasible upper bounds, and PO stands for polynomial optimization. The entries $a_i$ denote cross-sectional areas of the $i$-th element, or the areas constructed from the first-order moments in the case of PO.}
	\label{tab:sw}
\end{table}

To visualize this, let our third illustration be the single-load-case frame structure in Fig.~\ref{fig:sw}a, composed of two frame elements with $E = 1$ with I-shaped cross-sections, Fig.~\ref{fig:sw}b, parametrized by the thickness $t_{\mathrm{p},i}$. The overall volume is bounded from above by $\overline{V} = 1$. The self-weight applies in the vertical direction and is parametrized by the material density $\rho = 10$. For the considered cross-sections, we have $a_i = 18 t_{\mathrm{p},i}^2$ and $I_i = 246 t_{\mathrm{p},i}^4$. Hence, $c_{\mathrm{II}} = 41/54$ and $c_\mathrm{III} = 0$ in Eq.~\eqref{eq:inertia}.

The feasible domain of this optimization problem, Fig.~\ref{fig:sw}c, reveals three local optima, and one of them is the global solution. Computation of the optimum by the moment-sum-of-squares hierarchy required three relaxations, Figs.~\ref{fig:sw}d--\ref{fig:sw}f, which converged based on both the rank condition \eqref{eq:rank} with $s=1$ and on Theorem~\ref{th:suff} with $\varepsilon = -7 \times 10^{-8}$; the slightly negative value of $\varepsilon$ is due to the numerical accuracy of the optimizer. Also notice that the upper-bounds based on Theorem \ref{th:feasible} are of very high qualities, see Table~\ref{tab:sw}.

Using local optimization techniques, only OC and MMA were able to arrive at the global optimum, see Table~\ref{tab:sw}. Among other formulations, NSDP approached the worst local optimum and  \texttt{fmincon} failed even to find a feasible solution.

\subsection{Different element types on a cantilever beam}\label{sec:examples_cantilever}

\begin{figure}[t]
	\newcommand{\snewpoint}[4]{\dpoint{#1m}{#2}{0}{0};\dpoint{#1a}{#2}{-#3}{-#4};\dpoint{#1b}{#2}{-#3}{#4};\dpoint{#1c}{#2}{#3}{#4};\dpoint{#1d}{#2}{#3}{-#4};\dpoint{#1am}{#2}{0}{-#4};\dpoint{#1bm}{#2}{0}{#4};}
	\newcommand{\surfcube}[2]{\dbeam{3}{#1a}{#1b};\dbeam{2}{#1b}{#1c};\dbeam{2}{#1c}{#1d};\dbeam{3}{#1d}{#1a};\dbeam{3}{#2a}{#2b};\dbeam{2}{#2b}{#2c};\dbeam{2}{#2c}{#2d};\dbeam{3}{#2d}{#2a};\dbeam{3}{#1a}{#2a};\dbeam{2}{#1b}{#2b};\dbeam{2}{#1c}{#2c};\dbeam{2}{#1d}{#2d};}
	\newcommand{\surflastcube}[2]{\dbeam{3}{#1a}{#1b};\dbeam{2}{#1b}{#1c};\dbeam{2}{#1c}{#1d};\dbeam{3}{#1d}{#1a};\dbeam{2}{#2a}{#2b};\dbeam{2}{#2b}{#2c};\dbeam{2}{#2c}{#2d};\dbeam{2}{#2d}{#2a};\dbeam{3}{#1a}{#2a};\dbeam{2}{#1b}{#2b};\dbeam{2}{#1c}{#2c};\dbeam{2}{#1d}{#2d};}
	\begin{tikzpicture}
	\snewpoint{a}{0.0}{0.2}{1.0}; 
	\snewpoint{b}{0.9}{0.2}{1.0};
	\snewpoint{c}{1.8}{0.2}{1.0};
	\snewpoint{d}{2.7}{0.2}{1.0};
	\snewpoint{e}{3.6}{0.2}{1.0};
	\snewpoint{f}{4.5}{0.2}{1.0};
	\dpoint{ax}{-1.2}{1}{0};
	\dpoint{bx}{-1}{1}{0};
	\support{3}{aam}[270];
	\support{3}{abm}[270];
	\support{3}{am}[270];
	\surfcube{a}{b};
	\surfcube{b}{c};
	\surfcube{c}{d};
	\surfcube{d}{e};
	\surflastcube{e}{f};
	\dlineload{1}{yz}{fbm}{fam}[.75][.75];
	\dlineload{1}{xz}{fbm}{fam}[1.3][1.3];
	\ddimensioning{xz}[-1.2]{aa}{ba}{.5}[$1$][1.2];
	\ddimensioning{xz}[-1.2]{ba}{ca}{.5}[$1$][1.2];
	\ddimensioning{xz}[-1.2]{ca}{da}{.5}[$1$][1.2];
	\ddimensioning{xz}[-1.2]{da}{ea}{.5}[$1$][1.2];
	\ddimensioning{xz}[-1.2]{ea}{fa}{.5}[$1$][1.2];
	\ddimensioning{xz}[-1.75]{aa}{fa}{.5}[$5$][1.75];
	\ddimensioning{zx}[1.5]{aa}{ab}{.5}[$1$][1.5];
	\dnotation{1}{fm}{$\cos(30^\circ)$}[below right=4mm and -2mm];
	\dnotation{1}{fm}{$\sin(30^\circ)$}[above right=7mm and 4mm];
	\dnotation{4}{am}{bm}[$1$];
	\dnotation{4}{bm}{cm}[$2$];
	\dnotation{4}{cm}{dm}[$3$];
	\dnotation{4}{dm}{em}[$4$];
	\dnotation{4}{em}{fm}[$5$];
	\dscaling{3}{0.5}
	\setaxis{3}[$A$][$B$][$C$][$x$][$y$][$z$]
	\setaxis{4}[right][below left][below];
	\daxis{3}{0}[ax][bx][0][200.0][180];
	\end{tikzpicture}
	\caption{Boundary conditions of the cantilever beam design problem.}
	\label{fig:cantilever}
\end{figure}

\begin{figure}[b]
	\includegraphics[width=\linewidth]{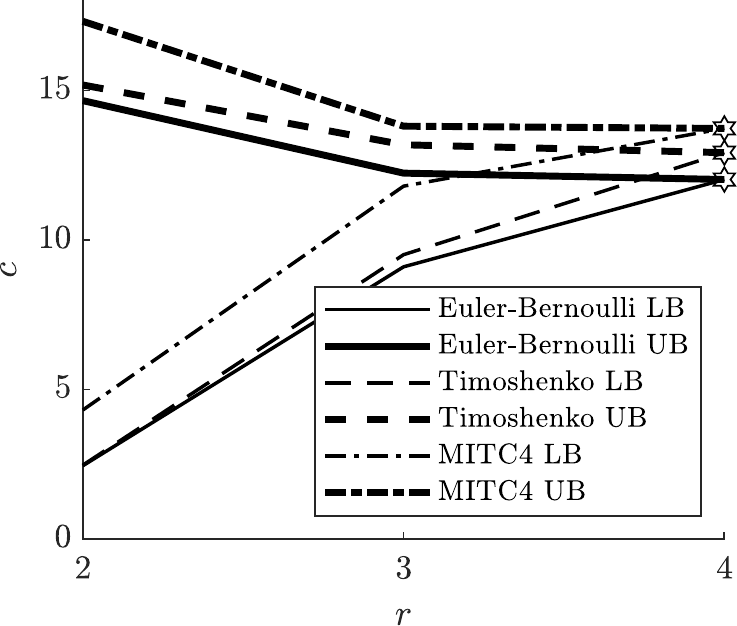}
	\caption{Convergence of the moment-sum-of-squares hierarchy for the cantilever problem with three finite element types. Variable $c$ denotes compliance, $r$ stands for the relation degree, and LB with UB abbreviate lower- and upper-bound.}
	\label{fig:cantilever_convergence}
\end{figure}

A certain generality of the developed approach is illustrated on a~cantilever beam/plate design problem, Fig.~\ref{fig:cantilever}. The dimensions of the cantilever are $5$ in length and $1$ in width, and the thicknesses of $5$ finite elements are to be found in the optimization. The beam is made of a linear-elastic material with the Young modulus $E=1$ and Poisson ratio $\nu=0.25$. This structure is subjected to a tip distributed load of magnitude $1$ induced under $30^\circ$ angle with respect to the midline/midsurface. We optimize the frame/shell thicknesses $t_i$ (of rectangular cross-sections) while satisfying $\overline{V}=10$. The shear correction factor is set to $5/6$ where appropriate.

In what follows, we compare the optimization results of the cantilever problem for three finite element types: Euler-Bernoulli and Timoshenko frame elements, and the quadrilateral Mixed Interpolation Tensorial Component (MITC4) shell element~\citep{Bathe_1986}. For both of the frame element types, we have $c_\mathrm{II} = 0$ and $c_\mathrm{III} = I_i (a)/a_i^3 = 1/12$ in Eq~\eqref{eq:inertia}, whereas $c_\mathrm{II} = 0$ and $c_\mathrm{III} = 1$ for the MITC4 element.

\begin{table}[t]
	\centering\setlength{\tabcolsep}{5pt}
	\begin{tabular}{cccc}
		& Euler-Bernoulli & Timoshenko & MITC4\\
		\hline
		$a_1^*$ & $2.775$ & $2.724$ & $2.754$\\ 
		$a_2^*$ & $2.454$ & $2.414$ & $2.462$\\
		$a_3^*$ & $2.086$ & $2.060$ & $2.091$\\
		$a_4^*$ & $1.639$ & $1.643$ & $1.651$\\
		$a_5^*$ & $1.047$ & $1.159$ & $1.041$\\
		$c^*$ & $12.025$ & $12.922$ & $13.734$\\
		time [s] & $65.363$ & $58.633$ & $967.086$\\
		Th. \ref{th:suff}, $\varepsilon$ & $-2\times 10^{-9}$ & $-9 \times 10^{-10}$ & $-2 \times 10^{-9}$\\
		Eq.~\ref{eq:rank}, $s$ & $1$ & $1$ & $1$
	\end{tabular}
	\caption{Globally optimal thicknesses $a_1^*,\dots,a_5^*$ and compliances $c^*$ for the cantilever problem for three element types: Euler-Bernoulli and Timoshenko frame elements, and the MITC4 shell element. Variables $\varepsilon$ and $s$ denote the optimality gap in Theorem \ref{th:suff} and the rank of the moment matrices according to \eqref{eq:rank}, respectively.}
	\label{tab:optimal solutions}
\end{table}

\begin{figure*}[t]
	\begin{subfigure}{0.375\linewidth}
		\centering
		\begin{tikzpicture}
		\scaling{2.3};
		
		\point{a}{0}{0};
		\point{b}{1}{0};
		\point{c}{2}{0};
		\point{d}{0}{1};
		\point{e}{1}{1};
		\point{f}{2}{1};
		\point{g}{0}{2};
		\point{h}{1}{2};
		\point{i}{2}{2};
		
		
		\beam{2}{a}{b};
		\beam{2}{a}{e};
		\beam{2}{a}{f};
		\beam{2}{a}{h};
		\beam{2}{b}{c};
		\beam{2}{b}{d};
		\beam{2}{b}{e};
		\beam{2}{b}{f};
		\beam{2}{b}{g};
		\beam{2}{c}{d};
		\beam{2}{c}{e};
		\beam{2}{c}{f};
		\beam{2}{c}{h};
		\beam{2}{d}{e};
		\beam{2}{e}{f};
		\beam{2}{d}{h};
		\beam{2}{e}{g};
		\beam{2}{e}{h};
		\beam{2}{g}{h};
		\beam{2}{f}{g};
		\beam{2}{f}{h};
		
		\support{3}{a}[270];
		\support{3}{d}[270];
		\support{3}{g}[270];
		
		\load{1}{c}[90][0.7][-0.7];
		\load{1}{f}[90][1][0.12];
		\notation{1}{c}{$2$}[below right=2mm];
		\notation{1}{f}{$3.5$}[above right=2mm];
		
		\draw[->] (0.0,0)--(1,0);
		\node at (0.6, 0.15) {$x$};
		\draw[->] (0.0,0)--(0,1);
		\node at (0.15, 0.6) {$y$};
		
		\dimensioning{1}{a}{b}{-1.2}[$1$]
		\dimensioning{1}{b}{c}{-1.2}[$1$]
		\dimensioning{2}{a}{d}{-0.8}[$1$]
		\dimensioning{2}{d}{g}{-0.8}[$1$]
		
		\notation{1}{a}{\circled{$1$}}[align=center];
		\notation{1}{b}{\circled{$2$}}[align=center];
		\notation{1}{c}{\circled{$3$}}[align=center];
		\notation{1}{d}{\circled{$4$}}[align=center];
		\notation{1}{e}{\circled{$5$}}[align=center];
		\notation{1}{f}{\circled{$6$}}[align=center];
		\notation{1}{g}{\circled{$7$}}[align=center];
		\notation{1}{h}{\circled{$8$}}[align=center];
		\end{tikzpicture}
		\caption{}
	\end{subfigure}%
	\hfill\begin{subfigure}{0.15\linewidth}
		\centering
		\begin{tikzpicture}[scale=1]
		\scaling{0.2}
		\point{a}{0}{0};
		\point{a1}{4}{0};
		\point{a2}{5}{0};
		\draw[black, fill=gray, fill opacity=0.2] (0,0) circle (1);
		\draw[black, fill=white, fill opacity=1.0] (0,0) circle (0.8);
		\dimensioning{1}{a}{a1}{1.2}[\raisebox{2mm}{$4r_i$}];
		\dimensioning{1}{a1}{a2}{1.2}[\raisebox{2mm}{$r_i$}];
		\end{tikzpicture}
		\caption{}
	\end{subfigure}%
	\hfill\begin{minipage}{0.43\linewidth}
		\begin{subfigure}{0.4\linewidth}
			\centering
			\includegraphics[width=0.88\linewidth]{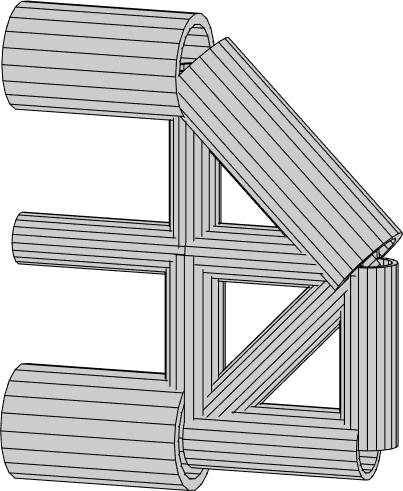}
			\caption{}
		\end{subfigure}%
		\hfill\begin{subfigure}{0.4\linewidth}
			\centering
			\includegraphics[width=0.88\linewidth]{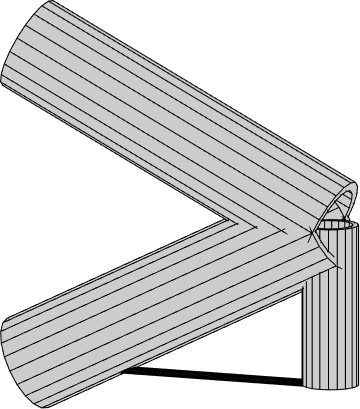}
			\caption{}
		\end{subfigure}\\
		\begin{subfigure}{0.4\linewidth}
			\centering
			\includegraphics[width=0.88\linewidth]{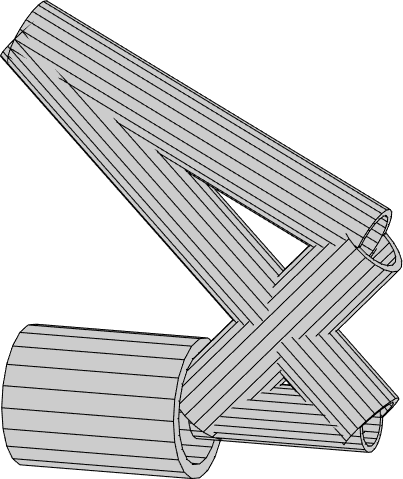}
			\caption{}
		\end{subfigure}%
		\hfill\begin{subfigure}{0.4\linewidth}
			\centering
			\includegraphics[width=0.88\linewidth]{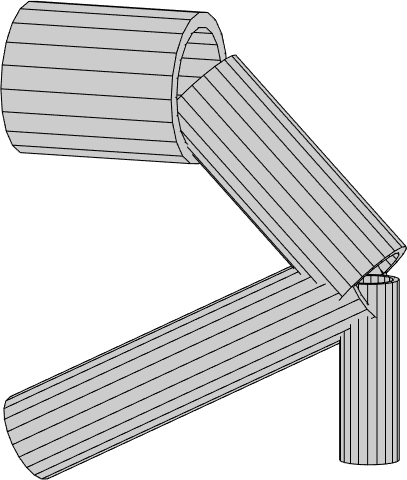}
			\caption{}
		\end{subfigure}
	\end{minipage}
	\caption{(a) Ground structure of the $22$-elements frame optimization problem, (b) cross-section parameterized by $r_i$, and optimized designs of compliances: (c) $c=3276.3$ obtained by PO$^{(1)}$, (d) $c^* = 1668.6$ resulting from PO$^{(2)}$ and OC, (e) $c = 1697.7$ reached by MMA and \texttt{fmincon}, and (f) $c = 1741.1$ optimized by NSDP.}
	\label{fig:10}
\end{figure*}

The moment-sum-of-squares hierarchy required three steps (degree-four relaxation) to converge in all three cases, Fig.~\ref{fig:cantilever_convergence}, and approached very similar optimal thicknesses, Table~\ref{tab:optimal solutions}. As expected, the lowest compliance is provided by the Euler-Bernoulli frame elements, which neglect the shear effects. We account for these effects in the Timoshenko frame elements, increasing so the value of optimal compliance. Another generalization occurs when using the MITC4 shell elements, which not only consider the effects of shear, but further incorporate effects induced by bending about $z$ axis, recall Fig.~\ref{fig:cantilever}. Therefore, the optimal compliance associated with the MITC4 elements is the highest. These results thus reveal the importance of using an appropriate finite element type for particular problem, as neglecting a~physical phenomenon may result in a~suboptimal design. Moreover, this influence on optimal minimum-energy designs can be rigorously studied by the proposed approach.

\subsection{22-elements frame structure}

Our final example investigates topology optimization of a~$22$-element frame structure shown in Fig.~\ref{fig:10}a. Two loads are applied at nodes $3$ and $6$ in a single load case. In addition, we set $E=1$ and $\overline{V} = 0.5$. All the structural elements possess a thin-walled circular cross-section with the radius $5 r_i$ and the wall thickness $r_i$, Fig.~\ref{fig:10}b. Hence, $a_i = 9 \pi r_i^2$ and $I_i = 46.125 \pi r_i^4$, so that $c_\mathrm{II} = 46.125/(81\pi)$ and $c_\mathrm{III} = 0$.

\begin{table}[b]
	\centering
	\centering\setlength{\tabcolsep}{3pt}
	\begin{tabular}{lccc}
		method & LB & UB & time [s]\\ 
		\hline
		OC & - & $1668.585$ & $2.454$\\
		MMA & - & $1697.749$ & $13.816$\\
		\texttt{fmincon} & - & $1697.665$ & $0.650$ \\
		NSDP & - & $1741.062$ & $4.406$\\
		PO$^{(1)}$, Th.~\ref{th:suff}, Eq.~\eqref{eq:rank} & $1062.105$ & $3276.294$ & $0.103$\\
		PO$^{(2)}$, Th.~\ref{th:suff}, Eq.~\eqref{eq:rank} & $1668.584$ & $1668.584$ & $1492.842$
	\end{tabular}
	\caption{Different optimization methods applied to the $22$-frame structure design problem. LB denotes lower bounds, UB abbreviates feasible upper bounds, and PO stands for polynomial optimization.}
	\label{tab:22}
\end{table}

The moment-sum-of-squares hierarchy requires two relaxations to achieve a guaranteed global optimum, both based on Theorem \ref{th:suff} with $\varepsilon = 2\times 10^{-5}$ and on Eq.~\eqref{eq:rank} with $s = 1$. However, even the second relaxation is fairly computationally expensive (see Table \ref{tab:22}), prohibiting solution of higher relaxations of similarly-sized problems on standard hardware.

Evaluation of the local optimization algorithms revealed that only OC converged to the global optimum ($c^* = 1668.6$ shown in Fig.~\ref{fig:10}d). The remaining optimization approaches reached local optima of comparable performance but considerably different topologies: MMA and \texttt{fmincon} converged to the design shown in Fig.~\ref{fig:10}e with $c=1697.7$, and NSDP reached the design in Fig.~\ref{fig:10}f with $c=1741.1$.

\section{Conclusions}\label{sec:conclusion}

Our contribution has addressed the fundamental question in the structural design: how to find globally-optimal minimum-compliant bending-resistant structures in discrete topology optimization with continuous design variables. For the cases of frame and shell structures, multiple loading conditions and design-dependent loads, we have formulated this optimization problem as a~(non-linear) semidefinite program constrained by a polynomial matrix inequality. The feasible space of this optimization problem forms a~semialgebraic set; hence, powerful results on polynomial optimization---the moment-sum-of-squares hierarchy---facilitate computation of the global solutions.

This hierarchy generates a sequence of tightening outer convex approximations in the space of moments of the probability measures, so that the first-order moments converge monotonically to the convex hull of the original problem. Therefore, a~non-decreasing sequence of lower bounds is established. Using the first-order moments only, we have shown that a sequence of feasible upper bounds can be obtained by a simple correction. Consequently, because lower and upper bounds are available in each relaxation, the upper-bound design quality can be assessed, establishing a~sufficient condition of global $\varepsilon$-optimality. This condition is very simple to check and complements the traditional rank-based certificate of global optimality, e.g., \citep{Henrion_2006}.

Our condition fails to be necessary because the first-order moments are not unique when considered optimization problem possesses multiple global optima, potentially leaving a strictly positive optimality gap. For the case of the unique global optimum, we have shown that the hierarchy eventually attains a zero optimality gap as the relaxation number approaches infinity. We note here that the possibility of the global minima multiplicity can almost be avoided in practice when the symmetry of structure and boundary conditions are exploited.

These theoretical results have been illustrated on five problems, which indicate the merits and weaknesses of the presented strategy. First, all of our test problems exhibited a~rapid convergence of the hierarchy, allowing for extraction of all global solutions based on the flat extension theorem of \citet{Curto_1996}. However, the computational complexity is currently fairly high, also when compared to investigated local optimization techniques, leaving the ability to compute proven global optima for small-scale optimization problems only. Yet, even for middle-scale problems, the hierarchy still provides a~sequence of upper bounds of reasonable qualities, and, especially, the certificate of their $\varepsilon$-optimality.

In the future, we plan to extend these results in multiple directions. First, we believe that Theorem \ref{th:zero} can be strengthened to certify a zero optimality gap when convergence of the hierarchy in a finite number of steps occurs. Second, the computational demands could be decreased by exploiting the structural sparsity via clique-based chordal decomposition in the spirit of~\citep{Kim_2010,Kocvara_2020}. Another research directions may explore eigenvalue constraints and optimization~\citep{Achtziger_2008,Tyburec_2019}, the minimum-weight setting, or, eventually, investigate performance of the hierarchy for different topology optimization formulations.

\begin{acknowledgement}
	We thank Edita Dvo{\v{r}}{\'{a}}kov{\'{a}} for providing us with her implementation of the \textsc{Mitc4} shell elements \citep{Dvorakova2015}.
	
	Marek Tyburec, Jan Zeman, and Martin Kru{\v{z}}{\'{i}}k acknowledge the support of the Czech Science Foundation project No. 19-26143X.
\end{acknowledgement}

\section*{Compliance with ethical standards}

{\small\noindent \textbf{Conflict of interest}\hspace{1mm} The authors declare that they have no conflict of interest.}

{\small\noindent \textbf{Replication of results}\hspace{1mm} Source codes are available at \citep{tyburec_marek_2020_4048828}.}

\appendix

\section{Relation to truss topology optimization}\label{app:tto}

The problem formulation \eqref{eq:nsdp} has already been known in the context of truss topology optimization \citep{Vandenberghe_1996}, for which the constraints \eqref{eq:pmi} reduce to linear matrix inequalities (LMI). Consequently, the feasible set is convex, allowing for an efficient solution of \eqref{eq:nsdp} by interior point methods, for example.

A natural question then arises: What happens when the rotational degrees of freedom are neglected, solving truss topology optimization problem instead of the frame one? To this goal, however, we must first satisfy the rather restrictive assumption that the truss ground structure is capable of carrying the loads $\mathbf{f}_j (\mathbf{a})$, i.e.,
\begin{equation}\label{eq:rel_image}
\mathbf{f}_j (\mathbf{a}) \in \mathrm{Im}\left(\mathbf{K}_{\mathrm{t},j}(\mathbf{a}) \right), \forall j \in \{1\dots n_\mathrm{lc}\},
\end{equation}
where $\mathbf{K}_{\mathrm{t},j} (\mathbf{a}) = \mathbf{K}_{j,0}+ \sum_{i=1}^{n_\mathrm{e}} \mathbf{K}_{j,i}^{(1)} a_i$.
Suppose now that $\mathbf{a}^{*}_\mathrm{t}$ are optimal cross-sections obtained from a solution to \eqref{eq:nsdp} with the terms $\mathbf{K}_{j,i}^{(2)}$ and $\mathbf{K}_{j,i}^{(3)}$ neglected, and $\bm{\omega}^\mathrm{T} \mathbf{c}^*_\mathrm{t}$ is the associated optimal objective function value, which can be computed from $\mathbf{a}_\mathrm{t}^*$ as
\begin{equation}\label{eq:trussC}
\bm{\omega}^\mathrm{T} \mathbf{c}^*_\mathrm{t} = \sum_{j=1}^{n_\mathrm{lc}}\left( \omega_j \left[\mathbf{f}_j(\mathbf{a}_\mathrm{t}^*)\right]^\mathrm{T} \left[\mathbf{K}_{\mathrm{t},j}(\mathbf{a}_\mathrm{t}^*) \right]^\dagger \mathbf{f}_j(\mathbf{a}_\mathrm{t}^*) \right).
\end{equation}
When the optimal cross-sections of a truss structure, $\mathbf{a}^{*}_\mathrm{t}$, are reused in a frame structure, the resulting objective function value changes to
\begin{equation}\label{eq:frameC}
\bm{\omega}^\mathrm{T} \mathbf{c}_\mathrm{f} = \sum_{j=1}^{n_\mathrm{lc}}\left( \omega_j \left[\mathbf{f}_j(\mathbf{a}_\mathrm{t}^*)\right]^\mathrm{T} \left[\mathbf{K}_{\mathrm{t},j}(\mathbf{a}_\mathrm{t}^*) + \mathbf{K}_{\mathrm{b},j}(\mathbf{a}_\mathrm{t}^*) \right]^\dagger \mathbf{f}_j(\mathbf{a}_\mathrm{t}^*) \right)
\end{equation}
with $\mathbf{K}_{\mathrm{b},j}(\mathbf{a}) = \sum_{i=1}^{n_\mathrm{e}}\left( c_\mathrm{II} \mathbf{K}_{j,i}^{(2)} a_{\mathrm{t},i}^2 + c_\mathrm{III}\mathbf{K}_{j,i}^{(3)} a_{\mathrm{t},i}^3\right)$.

To state a relation between $\bm{\omega}^\mathrm{T} \mathbf{c}^*_\mathrm{t}$ and $\bm{\omega}^\mathrm{T} \mathbf{c}_\mathrm{f}$ we recall a~useful lemma:

\begin{lemma}\label{lemma:sum} \citep{kovanic1979pseudoinverse}
	Let $\mathbf{A} \in \mathbb{S}^n$ and $\mathbf{B} \in \mathbb{R}^{n\times q}$. Then,
	\begin{equation}
	\begin{multlined}
	\left( \mathbf{A} + \mathbf{B}\mathbf{B}^\mathrm{T} \right)^\dagger = \mathbf{A}^\dagger \\- \mathbf{A}^\dagger \mathbf{B} \left(\mathbf{I} + \mathbf{B}^\mathrm{T}\mathbf{A}^\dagger \mathbf{B}\right)^{-1} \mathbf{B}^\mathrm{T} \mathbf{A}^\dagger + \left(\mathbf{B}^\dagger_\perp\right)^\mathrm{T} \mathbf{B}^\dagger_\perp
	\end{multlined}
	\end{equation}
	with $\mathbf{B}_\perp = \left(\mathbf{I} - \mathbf{A}\mathbf{A}^\dagger \right) \mathbf{B}$.
\end{lemma}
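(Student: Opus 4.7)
The claim is that the matrix on the right-hand side, call it $\mathbf{X}$, equals the Moore--Penrose pseudo-inverse of $\mathbf{M}:=\mathbf{A}+\mathbf{B}\mathbf{B}^\mathrm{T}$. Since the pseudo-inverse is uniquely characterized by the four Penrose conditions, my plan is to verify them directly: (i) $\mathbf{M}\mathbf{X}\mathbf{M}=\mathbf{M}$, (ii) $\mathbf{X}\mathbf{M}\mathbf{X}=\mathbf{X}$, (iii) $(\mathbf{M}\mathbf{X})^\mathrm{T}=\mathbf{M}\mathbf{X}$, and (iv) $(\mathbf{X}\mathbf{M})^\mathrm{T}=\mathbf{X}\mathbf{M}$. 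Both $\mathbf{M}$ and $\mathbf{X}$ are manifestly symmetric --- the central factor $\mathbf{W}:=(\mathbf{I}+\mathbf{B}^\mathrm{T}\mathbf{A}^\dagger\mathbf{B})^{-1}$ inherits symmetry from its symmetric argument, and $(\mathbf{B}_\perp^\dagger)^\mathrm{T}\mathbf{B}_\perp^\dagger$ is symmetric by construction --- so (iii) and (iv) collapse to the single commutation identity $\mathbf{M}\mathbf{X}=\mathbf{X}\mathbf{M}$, and it is enough to establish (i), (ii) and this symmetry.

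The calculation is driven by the orthogonal decomposition $\mathbb{R}^n=\mathrm{Im}(\mathbf{A})\oplus\mathrm{Ker}(\mathbf{A})$, available because $\mathbf{A}$ is symmetric. Writing $\mathbf{P}:=\mathbf{A}\mathbf{A}^\dagger=\mathbf{A}^\dagger\mathbf{A}$ for the orthogonal projector onto $\mathrm{Im}(\mathbf{A})$ and splitting $\mathbf{B}=\mathbf{B}_\parallel+\mathbf{B}_\perp$ with $\mathbf{B}_\parallel:=\mathbf{P}\mathbf{B}$, a short list of orthogonality identities becomes available for free:
\begin{equation*}
\mathbf{A}\mathbf{B}_\perp=0,\quad \mathbf{A}^\dagger\mathbf{B}_\perp=0,\quad \mathbf{B}_\parallel^\mathrm{T}\mathbf{B}_\perp=0,
\end{equation*}
and, because the columns of $(\mathbf{B}_\perp^\dagger)^\mathrm{T}$ lie in $\mathrm{Im}(\mathbf{B}_\perp)\subseteq\mathrm{Ker}(\mathbf{A})$, also $\mathbf{A}(\mathbf{B}_\perp^\dagger)^\mathrm{T}=0$ and $\mathbf{B}_\parallel^\mathrm{T}(\mathbf{B}_\perp^\dagger)^\mathrm{T}=0$. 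Applying the Penrose identities to $\mathbf{B}_\perp$ itself yields the reproducing rules $\mathbf{B}_\perp\mathbf{B}_\perp^\dagger\mathbf{B}_\perp=\mathbf{B}_\perp$ and $\mathbf{B}_\perp^\mathrm{T}(\mathbf{B}_\perp^\dagger)^\mathrm{T}=\mathbf{B}_\perp^\dagger\mathbf{B}_\perp$. Expanding $\mathbf{M}\mathbf{X}$ and $\mathbf{X}\mathbf{M}$ term by term, every cross-product that pairs an ``$\mathbf{A}$-object'' with a ``$\mathbf{B}_\perp^\dagger$-object'' is annihilated; the remaining contributions involve only $\mathbf{A}$, $\mathbf{A}^\dagger$ and $\mathbf{B}_\parallel$ and collapse via the Woodbury telescoping $\mathbf{W}+\mathbf{B}^\mathrm{T}\mathbf{A}^\dagger\mathbf{B}\,\mathbf{W}=\mathbf{I}$. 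What is left is the orthogonal projector onto $\mathrm{Im}(\mathbf{A})\oplus\mathrm{Im}(\mathbf{B}_\perp)=\mathrm{Im}(\mathbf{M})$, which is automatically symmetric (giving (iii)--(iv)) and acts as the identity on $\mathrm{Im}(\mathbf{M})$ and on $\mathrm{Im}(\mathbf{X})$ (giving (i) and (ii), respectively, after right- or left-multiplying by $\mathbf{M}$ or $\mathbf{X}$).

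\textbf{Main obstacle.} The genuinely delicate part is the bookkeeping in the nine-term expansions of $\mathbf{M}\mathbf{X}$ and $\mathbf{X}\mathbf{M}$: the orthogonality list above prunes most cross-terms, but identifying the Woodbury telescoping that consolidates the $(\mathbf{A},\mathbf{B}_\parallel)$-contributions down to $\mathbf{P}$ is not mechanical and requires the exact form of $\mathbf{W}$. A cleaner (if notation-heavier) route --- which I would use as a cross-check --- is to pass to an orthonormal basis adapted to $\mathbb{R}^n=\mathrm{Im}(\mathbf{A})\oplus\mathrm{Ker}(\mathbf{A})$, in which $\mathbf{A}=\mathrm{diag}(\mathbf{A}_1,\mathbf{0})$ with $\mathbf{A}_1$ invertible symmetric. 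The whole identity then decouples into block identities whose $(1,1)$ block is the classical Sherman--Morrison--Woodbury formula applied to $\mathbf{A}_1+\mathbf{B}_{\parallel,1}\mathbf{B}_{\parallel,1}^\mathrm{T}$, whose $(2,2)$ block reduces to the pseudo-inverse $(\mathbf{B}_{\perp,2}\mathbf{B}_{\perp,2}^\mathrm{T})^\dagger=(\mathbf{B}_\perp^\dagger)^\mathrm{T}\mathbf{B}_\perp^\dagger$, and whose off-diagonal blocks are discharged by direct verification.
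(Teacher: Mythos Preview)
The paper does not prove this lemma at all: it is quoted from \citet{kovanic1979pseudoinverse} and used as a black box in the appendix, so there is no ``paper's own proof'' to compare against.

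Your verification via the four Penrose conditions, driven by the orthogonal splitting $\mathbb{R}^n=\mathrm{Im}(\mathbf{A})\oplus\mathrm{Ker}(\mathbf{A})$ and the decomposition $\mathbf{B}=\mathbf{B}_\parallel+\mathbf{B}_\perp$, is a sound and standard route. The orthogonality identities you list are correct, and the observation that $\mathbf{M}=\bigl(\mathbf{A}+\mathbf{B}_\parallel\mathbf{B}_\parallel^\mathrm{T}\bigr)+\mathbf{B}_\perp\mathbf{B}_\perp^\mathrm{T}$ block-diagonalises with respect to that splitting is exactly what makes the computation tractable. I would in fact promote your ``cross-check'' to the main argument: in a basis adapted to $\mathrm{Im}(\mathbf{A})\oplus\mathrm{Ker}(\mathbf{A})$ the identity decouples into the classical Woodbury formula on the invertible block and the elementary identity $(\mathbf{B}_\perp\mathbf{B}_\perp^\mathrm{T})^\dagger=(\mathbf{B}_\perp^\dagger)^\mathrm{T}\mathbf{B}_\perp^\dagger$ on the kernel block, with the off-diagonal blocks vanishing by the orthogonality you already recorded. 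This avoids the nine-term bookkeeping entirely.

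One point to make explicit: the lemma tacitly assumes that $\mathbf{I}+\mathbf{B}^\mathrm{T}\mathbf{A}^\dagger\mathbf{B}$ is invertible (otherwise the formula is not even defined). For general $\mathbf{A}\in\mathbb{S}^n$ this is not automatic; it does hold whenever $\mathbf{A}\succeq 0$, which is the only situation in which the paper applies the lemma (with $\mathbf{A}$ a stiffness matrix). You should state this hypothesis, and then note that it is equivalent, via the matrix determinant lemma, to the invertibility of $\mathbf{A}_1+\mathbf{B}_{\parallel,1}\mathbf{B}_{\parallel,1}^\mathrm{T}$ on $\mathrm{Im}(\mathbf{A})$, which is what legitimises the Woodbury step in the $(1,1)$ block.
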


Using this lemma, we prove that $\bm{\omega}^\mathrm{T} \mathbf{c}_\mathrm{t}$ provides an upper bound for $\bm{\omega}^\mathrm{T} \mathbf{c}_\mathrm{f}$.
\begin{lemma}
	Suppose that $\bm{\omega}^\mathrm{T} \mathbf{c}^*$ is the optimal objective function value of the frame structure design problem \eqref{eq:nsdp} and \eqref{eq:rel_image} holds. Then, $\bm{\omega}^\mathrm{T} \mathbf{c}^* \le \bm{\omega}^\mathrm{T} \mathbf{c}_\mathrm{f} \le \bm{\omega}^\mathrm{T} \mathbf{c}^*_\mathrm{t}$.
\end{lemma}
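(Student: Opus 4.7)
The plan is to establish the two inequalities separately, treating each as an independent claim.

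For the first inequality $\bm{\omega}^\mathrm{T} \mathbf{c}^* \le \bm{\omega}^\mathrm{T} \mathbf{c}_\mathrm{f}$, I would show that $\mathbf{a}_\mathrm{t}^*$ is feasible for the frame problem \eqref{eq:nsdp}. Non-negativity and the volume bound transfer trivially from the truss formulation. The only nontrivial point is verifying the image constraint implicit in the PMI \eqref{eq:pmi}: since $\mathbf{K}_{\mathrm{t},j}(\mathbf{a}_\mathrm{t}^*)$ and $\mathbf{K}_{\mathrm{b},j}(\mathbf{a}_\mathrm{t}^*)$ are both positive semidefinite, $\mathrm{Im}(\mathbf{K}_{\mathrm{t},j}(\mathbf{a}_\mathrm{t}^*)) \subseteq \mathrm{Im}(\mathbf{K}_{\mathrm{t},j}(\mathbf{a}_\mathrm{t}^*) + \mathbf{K}_{\mathrm{b},j}(\mathbf{a}_\mathrm{t}^*))$, so assumption \eqref{eq:rel_image} passes through. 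Then $\bm{\omega}^\mathrm{T}\mathbf{c}_\mathrm{f}$ from \eqref{eq:frameC} is the objective value realised at this feasible point, so the optimum $\bm{\omega}^\mathrm{T}\mathbf{c}^*$ is no larger.

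For the second inequality $\bm{\omega}^\mathrm{T} \mathbf{c}_\mathrm{f} \le \bm{\omega}^\mathrm{T} \mathbf{c}^*_\mathrm{t}$, I would apply Lemma \ref{lemma:sum} with $\mathbf{A} = \mathbf{K}_{\mathrm{t},j}(\mathbf{a}_\mathrm{t}^*)$ and a Cholesky-style factorisation $\mathbf{B}\mathbf{B}^\mathrm{T} = \mathbf{K}_{\mathrm{b},j}(\mathbf{a}_\mathrm{t}^*)$, which exists because $\mathbf{K}_{\mathrm{b},j}(\mathbf{a}_\mathrm{t}^*) \succeq 0$. Evaluating the quadratic form $\mathbf{f}_j^\mathrm{T}(\mathbf{A}+\mathbf{B}\mathbf{B}^\mathrm{T})^\dagger\mathbf{f}_j$ term by term:
\begin{equation}
\mathbf{f}_j^\mathrm{T}(\mathbf{A}+\mathbf{B}\mathbf{B}^\mathrm{T})^\dagger\mathbf{f}_j = \mathbf{f}_j^\mathrm{T}\mathbf{A}^\dagger\mathbf{f}_j - \mathbf{g}^\mathrm{T}(\mathbf{I}+\mathbf{B}^\mathrm{T}\mathbf{A}^\dagger\mathbf{B})^{-1}\mathbf{g} + \|\mathbf{B}_\perp^\dagger\mathbf{f}_j\|^2,
\end{equation}
with $\mathbf{g} = \mathbf{B}^\mathrm{T}\mathbf{A}^\dagger\mathbf{f}_j$. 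The middle term is non-negative since $\mathbf{I}+\mathbf{B}^\mathrm{T}\mathbf{A}^\dagger\mathbf{B} \succ 0$. For the last term, I would argue $\mathrm{Im}(\mathbf{B}_\perp) \subseteq \mathrm{Ker}(\mathbf{A})$ (by construction of $\mathbf{B}_\perp = (\mathbf{I}-\mathbf{A}\mathbf{A}^\dagger)\mathbf{B}$, since $\mathbf{I}-\mathbf{A}\mathbf{A}^\dagger$ is the orthogonal projector onto $\mathrm{Ker}(\mathbf{A})$), hence $\mathrm{Ker}(\mathbf{B}_\perp^\dagger) = \mathrm{Im}(\mathbf{B}_\perp)^\perp \supseteq \mathrm{Im}(\mathbf{A})$; together with the hypothesis $\mathbf{f}_j(\mathbf{a}_\mathrm{t}^*)\in \mathrm{Im}(\mathbf{A})$ this forces $\mathbf{B}_\perp^\dagger\mathbf{f}_j = \mathbf{0}$. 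Consequently $\mathbf{f}_j^\mathrm{T}(\mathbf{A}+\mathbf{B}\mathbf{B}^\mathrm{T})^\dagger\mathbf{f}_j \le \mathbf{f}_j^\mathrm{T}\mathbf{A}^\dagger\mathbf{f}_j$, and weighting by $\omega_j > 0$ and summing over load cases yields the desired inequality.

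The main obstacle I anticipate is the careful bookkeeping of range/kernel relations for the pseudo-inverse in order to establish that the $\mathbf{B}_\perp^\dagger$ contribution vanishes; the sign of the middle term and the first-inequality argument are essentially immediate. A secondary subtlety is choosing a valid symmetric factorisation $\mathbf{B}\mathbf{B}^\mathrm{T} = \mathbf{K}_{\mathrm{b},j}(\mathbf{a}_\mathrm{t}^*)$, which is standard for positive semidefinite matrices but should be stated explicitly to justify the invocation of Lemma \ref{lemma:sum}.
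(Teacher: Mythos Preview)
Your proposal is correct and follows essentially the same approach as the paper: feasibility of $\mathbf{a}_\mathrm{t}^*$ for the frame problem gives the first inequality, and an application of Lemma~\ref{lemma:sum} with a factorisation $\mathbf{K}_{\mathrm{b},j}(\mathbf{a}_\mathrm{t}^*)=\mathbf{B}_j\mathbf{B}_j^\mathrm{T}$ gives the second. Your range/kernel argument showing $\mathbf{B}_\perp^\dagger\mathbf{f}_j=\mathbf{0}$ is in fact more carefully spelled out than the paper's corresponding step, which simply asserts that the $(\mathbf{B}_\perp^\dagger)^\mathrm{T}\mathbf{B}_\perp^\dagger$ contribution lies in $\mathrm{Ker}(\mathbf{A}_j)$ and hence vanishes against $\mathbf{f}_j$.
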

\begin{proof}
	Because of $\mathbf{f}_j(\mathbf{a}_\mathrm{t}^*) \in \mathrm{Im}\left(\mathbf{K}_{\mathrm{t},j}(\mathbf{a}_\mathrm{t}^*)\right)$, we clearly have $\mathbf{f}_j (\mathbf{a}_\mathrm{t}^*) \in \mathrm{Im}\left( \mathbf{K}_{\mathrm{t},j}(\mathbf{a}_\mathrm{t}^*) + \mathbf{K}_{\mathrm{b},j}(\mathbf{a}_\mathrm{t}^*) \right)$. Therefore, $\mathbf{a}_\mathrm{t}^*$ is a feasible solution to the frame structure design problem \eqref{eq:nsdp} and the associated objective function is bounded from below by the global optimum $\bm{\omega}^\mathrm{T} \mathbf{c}^*$. Hence, $\bm{\omega}^\mathrm{T} \mathbf{c}^* \le \bm{\omega}^\mathrm{T} \mathbf{c}_\mathrm{f}$.
	
	For the other inequality, we express \eqref{eq:frameC} using Lemma~\ref{lemma:sum}. To this goal, let $\mathbf{K}_{\mathrm{b},j}(\mathbf{a}^*_\mathrm{t}) = \mathbf{B}_j \mathbf{B}_j^\mathrm{T}$, where $\mathbf{B}_j$ is a real matrix because $\mathbf{K}_{\mathrm{b},j}(\mathbf{a}^*_\mathrm{t}) \succeq 0$ by definition. Then, \eqref{eq:frameC} can be written as
	\begin{equation}\label{eq:kovanic}
	\bm{\omega}^\mathrm{T} \mathbf{c}_\mathrm{f} = \bm{\omega}^\mathrm{T} \mathbf{c}_\mathrm{t}^* - \bm{\omega}^\mathrm{T} \mathbf{c}_\mathrm{a} + \bm{\omega}^\mathrm{T} \mathbf{c}_\mathrm{b},
	\end{equation}
	where
	\begin{subequations}
		\begin{align}
		c_{\mathrm{a},j} &= \left[\mathbf{f}_j(\mathbf{a}_{\mathrm{t}}^*)\right]^\mathrm{T} \mathbf{A}_j^\dagger \mathbf{B}_j \left(\mathbf{I} + \mathbf{B}_j^\mathrm{T}\mathbf{A}_j^\dagger \mathbf{B}_j\right)^{-1} \mathbf{B}_j^\mathrm{T} \mathbf{A}_j^\dagger \mathbf{f}_j(\mathbf{a}_{\mathrm{t}}^*),\hspace{-1mm}\label{eq:kovanicA}\\
		c_{\mathrm{b},j} &= \left[\mathbf{f}_j(\mathbf{a}_{\mathrm{t}}^*)\right]^\mathrm{T} \left(\mathbf{B}^\dagger_{\perp,j}\right)^\mathrm{T} \mathbf{B}^\dagger_{\perp,j} \mathbf{f}_j(\mathbf{a}_{\mathrm{t}}^*),\label{eq:kovanicB}
		\end{align}
	\end{subequations}
	with $\mathbf{A}_j = \mathbf{K}_{\mathrm{t},j}(\mathbf{a}_\mathrm{t}^*)$. Clearly, Eq.~\eqref{eq:kovanicA} is non-negative. For \eqref{eq:kovanicB}, $\left(\mathbf{B}^\dagger_\perp\right)^\mathrm{T} \mathbf{B}^\dagger_\perp \in \mathrm{Ker}(\mathbf{A}_j)$, so that $\bm{\omega}^\mathrm{T} \mathbf{c}_\mathrm{b}$ vanishes. Hence, $\bm{\omega}^\mathrm{T} \mathbf{c}_\mathrm{f} = \bm{\omega}^\mathrm{T} \mathbf{c}_\mathrm{t}^* - \bm{\omega}^\mathrm{T} \mathbf{c}_\mathrm{a} \le \bm{\omega}^\mathrm{T} \mathbf{c}_\mathrm{t}^*$.
\end{proof}
Thus, when \eqref{eq:rel_image} holds true, the truss topology optimization produces an upper bound to the optimal objective of the frame structure topology optimization problem.

\hiddenbox{
\section{Handling example PMI}
\begin{subequations}
\begin{align}
\min\;\; & -x_1^2 - x_2^2\\
\mathrm{s.t.}\;\; & \mathbf{A}(\mathbf{x}) = \begin{pmatrix}
1 - 16 x_1 x_2 & x_1\\
x_1 & 1-x_1^2-x_2^2
\end{pmatrix} \succeq 0,\\
&\lVert \mathbf{x} \rVert \le 1
\end{align}
\end{subequations}

\subsection{Using characteristic polynomial (scalarization)}
\begin{multline}
\det (t \mathbf{I} - \mathbf{A}(\mathbf{x})) = (t - 1 + 16 x_1 x_2)(t - 1 + x_1^2 + x_2^2) - (-x_1)^2 = \\
= t^2 - 2t + t x_1^2 + t x_2^2 + 16 t x_1 x_2 + 1 - 2x_1^2 - x_2^2 - 16x_1 x_2 + 16 x_1^3 x_2 + 16 x_1 x_2^3 = \\
= t^2 + t (x_1^2 + x_2^2 + 16 x_1 x_2 - 2) + (16 x_1^3 x_2 + 16 x_1 x_2^3 - 2x_1^2 -x_2^2 - 16 x_1 x_2 + 1)
\end{multline}
Characteristic polynomial must have only non-negative real roots. 

Let
\begin{subequations}
\begin{align}
a &:= 1\\
b &:= x_1^2 + x_2^2 + 16 x_1 x_2 - 2\\
c &:= 16 x_1^3 x_2 + 16 x_1 x_2^3 - 2x_1^2 -x_2^2 - 16 x_1 x_2 + 1
\end{align}
\end{subequations}

Then
\begin{equation}
t_{1,2} = -\frac{b}{2} \pm \frac{\sqrt{b^2 -4 c}}{2}
\end{equation}
requires
\begin{subequations}
\begin{align}
b^2 - 4c &\ge 0\\
-b - \sqrt{b^2 - 4c} &\ge 0 
\end{align}
\end{subequations}
\begin{subequations}
\begin{align}
x_1^4 - 32 x_1^3 x_2 + 258 x_1^2 x_2^2 + 4 x_1^2 - 32 x_1 x_2^3 + x_2^4 \ge 0
\end{align}
\end{subequations}

This is true when (why?)
\begin{subequations}
\begin{align}
x_1^2 + x_2^2 + 16 x_1 x_2 - 2 & \ge 0 \\
16 x_1^3 x_2 + 16 x_1 x_2^3 - 2x_1^2 -x_2^2 - 16 x_1 x_2 + 1 &\ge 0
\end{align}
\end{subequations}
Consequently we have an equivalent SDP
\begin{subequations}
\begin{align}
\min\;\; & -x_1^2 - x_2^2\\
\mathrm{s.t.}\;\; & x_1^2 + x_2^2 + 16 x_1 x_2 - 2 \ge 0,\\
& 16 x_1^3 x_2 + 16 x_1 x_2^3 - 2x_1^2 -x_2^2 - 16 x_1 x_2 + 1 \ge 0,\\
& 1 - x_1^2 - x_2^2 \ge 0
\end{align}
\end{subequations}

\subsection{Using matrix structure}
First LMI relaxation:
\begin{subequations}
\begin{align}
f^{(1)} = \min\;\; & -y_{20} - y_{02}\\
\mathrm{s.t.}\;\; & \begin{pmatrix}
1 - 16 y_{11} & y_{10}\\
y_{10} & 1-y_{20}-y_{02}
\end{pmatrix} \succeq 0,\\
& 1 - y_{20} - y_{02} \ge 0\\
& \begin{pmatrix}
1 & y_{10} & y_{01}\\
y_{10} & y_{20} & y_{11}\\
y_{01} & y_{11} & y_{02}
\end{pmatrix} \succeq 0
\end{align}
\end{subequations}
Second LMI relaxation:
\begin{subequations}
	\begin{align}
	f^{(2)} = \min\;\; & -y_{20} - y_{02}\\
	\mathrm{s.t.}\;\; & \begin{pmatrix}
	1 - 16 y_{11}      & y_{10}               & y_{10}-16y_{21} & y_{20}               & y_{01} - 16 y_{12} & y_{11}               \\
	y_{10}             & 1-y_{20}-y_{02}      & y_{20}          & y_{10}-y_{30}-y_{12} & y_{11}             & y_{01}-y_{21}-y_{03} \\
	y_{10} - 16 y_{21} & y_{20}               & y_{20}-16y_{31} & y_{30}               & y_{11} - 16 y_{22} & y_{21}               \\
	y_{20}             & y_{10}-y_{30}-y_{12} & y_{30}          & y_{20}-y_{40}-y_{22} & y_{21}             & y_{11}-y_{31}-y_{13} \\
	y_{01} - 16 y_{12} & y_{11}               & y_{11}-16y_{22} & y_{21}               & y_{02} - 16 y_{13} & y_{12}               \\
	y_{11}             & y_{01}-y_{21}-y_{03} & y_{21}          & y_{11}-y_{31}-y_{13} & y_{12}             & y_{02}-y_{22}-y_{04}
	\end{pmatrix} \succeq 0,\\
	& \begin{pmatrix}
	1 - y_{20} - y_{02} & y_{10} - y_{30} - y_{12} & y_{01} - y_{21} - y_{03}\\
	y_{10} - y_{30} - y_{12} & y_{20} - y_{40} - y_{22} & y_{11} - y_{31} - y_{13}\\
	y_{01} - y_{21} - y_{03} & y_{11} - y_{31} - y_{13} & y_{02} - y_{22} - y_{04}
	\end{pmatrix} \succeq 0\\
	& \begin{pmatrix}
	1 & y_{10} & y_{01} & y_{20} & y_{11} & y_{02}\\
	y_{10} & y_{20} & y_{11} & y_{30} & y_{21} & y_{12}\\
	y_{01} & y_{11} & y_{02} & y_{21} & y_{12} & y_{03}\\
	y_{20} & y_{30} & y_{21} & y_{40} & y_{31} & y_{22}\\
	y_{11} & y_{21} & y_{12} & y_{31} & y_{22} & y_{13}\\
	y_{02} & y_{12} & y_{03} & y_{22} & y_{13} & y_{04}
	\end{pmatrix} \succeq 0
	\end{align}
\end{subequations}
Size of the matrix is $m \sum_{i=1}^{d}$ of the first $d$ elements in, i.e., $\{1,2,3,4,5,\dots\}$. For a scalar inequality, this is just $m=1$.

\section{Frame structure optimization}

\subsection{Dual problem}

Lagrangian:
\begin{equation}
\mathcal{L}(\mathbf{a}, \mathbf{u}, \bm{\lambda}, \mu, \bm{\nu}) = 
\sum_{j=1}^{n_\mathrm{lc}} \omega_j \left[\mathbf{f}_j(\mathbf{a})^\mathrm{T}\mathbf{u}_j\right] +
\sum_{j=1}^{n_\mathrm{lc}} \bm{\lambda}_j^\mathrm{T} \left[\mathbf{f}_j(\mathbf{a}) - \mathbf{K}_j(\mathbf{a}) \mathbf{u}_j\right]
+ \mu \left( \bm{\ell}^\mathrm{T} \mathbf{a} - \overline{V}\right) + \bm{\nu}^\mathrm{T}\left(\varepsilon\mathbf{1}-\mathbf{a}\right)
\end{equation}
Stationarity w.r.t. $\mathbf{a}$
\begin{equation}
0 = \nabla_{a_i} \mathcal{L} = \sum_{j=1}^{n_\mathrm{lc}} \left[\left(\omega_j+1\right)\mathbf{f}_{j,i}^\mathrm{T} \mathbf{u}_j \right] - \sum_{j=1}^{n_\mathrm{lc}} \left( \bm{\lambda}_j^\mathrm{T}  \left[
\mathbf{K}_{j,i}^{(1)} + 2 \mathbf{K}_{j,i}^{(2)} a_i \right] \mathbf{u}_j \right) + \mu \ell_i - \nu_i
\end{equation}
Stationarity w.r.t. $\mathbf{u}$
\begin{equation}
\mathbf{0} = \nabla_{\mathbf{u}_j} \mathcal{L}(\mathbf{a}, \mathbf{u}, \bm{\lambda}, \mu, \bm{\nu}) = \left[ \mathbf{f}_j (\mathbf{a}) - \mathbf{K}_j (\mathbf{a})\bm{\lambda}_j \right]^\mathrm{T},
\end{equation}

\subsection{Lagrangian duality}

Beside the moment-sum-of-squares hierarchy, also the Lagrangian duality facilitates computation of lower bounds to the optimization problem \eqref{eq:nsdp}. This section is devoted to deriving the dual form, allowing for a comparison of the lower bounds based on the moment-sum-of-squares hierarchy and those settled by the Lagrangian dual.

To this goal, let us write the polynomial matrix inequalities \eqref{eq:pmi} as
\begin{equation}
\mathbf{Q}_j (\mathbf{a}, c_j) := \begin{pmatrix}
c_j & -\mathbf{f}_j(\mathbf{a})^\mathrm{T}\\
-\mathbf{f}_j(\mathbf{a})^\mathrm{T} & \mathbf{K}_j(\mathbf{a})
\end{pmatrix} = \mathbf{Q}_{j,0} + \sum_{i=1}^{n_\mathrm{e}} \left(a_{i} \mathbf{Q}_{j,i}^{\mathrm{(1)}}\right) + \sum_{i=1}^{n_\mathrm{e}} \left(a_{i}^2 \mathbf{Q}_{j,i}^{(2)}\right) + c_j \mathbf{Q}_{j,\mathrm{c}}^{(1)},\quad \forall j \in \{1\dots n_\mathrm{lc}\}.
\end{equation}
Notice that when $\mathbf{f}_j (\mathbf{a}) = \mathbf{f}_j$, i.e., when the loading lacks a dependence on the design variables, we have that $\mathbf{Q}_{j,i}^{(1)}$ is the membrane stiffness $\mathbf{K}_{j,i}^{\mathrm{m}}$ of the element $i$ in the $j$-th load case, and $\mathbf{Q}_{j,i}^{(2)}$ equals to its bending stiffness $\mathbf{K}_{j,i}^{\mathrm{b}}$.

The Lagrangian function of the problem \eqref{eq:nsdp} reads as
\begin{equation}\label{eq:lagr_gen}
\mathcal{L}(\mathbf{a}, \mathbf{c}, \mathbf{R}_1, \dots, \mathbf{R}_{n_\mathrm{lc}}, \lambda, \bm{\mu}) = \sum_{j=1}^{n_\mathrm{lc}} \left(\omega_j c_j\right)
- \sum_{j=1}^{n_\mathrm{lc}} \langle\mathbf{R}_j, \mathbf{Q}_j (\mathbf{a}, c_j)\rangle + \lambda \left[\sum_{i=1}^{n_\mathrm{e}} \left(\ell_i a_i\right) - \overline{V} \right] - \sum_{i=1}^{n_\mathrm{e}}\left(\mu_i a_i \right),
\end{equation}
where $\forall j \in \{1\dots n_\mathrm{lc}\}: \mathbf{R}_j \succeq 0, \lambda \ge 0$, and $\bm{\mu} \ge \mathbf{0}$ are the Lagrange multipliers associated with \eqref{eq:pmi}--\eqref{eq:nsdp_areas}. After splitting \eqref{eq:lagr_gen} accordingly to the design variables of \eqref{eq:nsdp}, we obtain
\begin{multline}\label{eq:lagrangian}
\mathcal{L}(\mathbf{a}, \mathbf{c}, \mathbf{R}_1, \dots, \mathbf{R}_{n_\mathrm{lc}}, \lambda, \bm{\mu}) = 
- \sum_{j=1}^{n_\mathrm{lc}} \langle \mathbf{R}_j, \mathbf{Q}_{j,0}\rangle -\lambda\overline{V}
+ \sum_{j=1}^{n_\mathrm{lc}}\left[ \left( \omega_j - \langle \mathbf{R}_j, \mathbf{Q}_{j,\mathrm{c}}^{(1)}\rangle \right) c_j \right]\\
+ \sum_{i=1}^{n_\mathrm{e}} \left[ \left(\lambda \ell_i - \mu_i - \sum_{j=1}^{n_\mathrm{lc}} \langle \mathbf{R}_j, \mathbf{Q}_{j,i}^{(1)}\rangle
\right) a_{i}\right]
- \sum_{i=1}^{n_\mathrm{e}} \left[ \sum_{j=1}^{n_\mathrm{lc}} \langle \mathbf{R}_j, \mathbf{Q}_{j,i}^{(2)}\rangle a_{i}^2\right].
\end{multline}
Associated with the Lagrangian \eqref{eq:lagrangian} is the dual function
\begin{equation}\label{eq:dualfunc}
g(\mathbf{R}_1, \dots, \mathbf{R}_{n_\mathrm{lc}}, \lambda, \bm{\mu}) = \inf_{\mathbf{a}, \mathbf{c}} \left\{ 
\mathcal{L}(\mathbf{a}, \mathbf{c}, \mathbf{R}_1, \dots, \mathbf{R}_{n_\mathrm{lc}}, \lambda, \bm{\mu})
\right\}.
\end{equation}
The infimum of the Lagrangian is attained at the stationary point
\begin{subequations}
\begin{align}
\nabla_{c_\mathrm{j}} \mathcal{L} (\mathbf{a}, \mathbf{c}, \mathbf{R}_1, \dots, \mathbf{R}_{n_\mathrm{lc}}, \lambda, \bm{\mu}) &= \omega_j - \langle \mathbf{R}_j, \mathbf{Q}_{j,\mathrm{c}}^{(1)}\rangle = 0,&\forall j \in \{1\dots n_\mathrm{lc}\},\\
\nabla_{a_i} \mathcal{L} (\mathbf{a}, \mathbf{c}, \mathbf{R}_1, \dots, \mathbf{R}_{n_\mathrm{lc}}, \lambda, \bm{\mu}) &= \lambda \ell_i - \mu_i - \sum_{j=1}^{n_\mathrm{lc}} \langle \mathbf{R}_j, \mathbf{Q}_{j,i}^{(1)}\rangle
- 2 \sum_{j=1}^{n_\mathrm{lc}} \langle \mathbf{R}_j, \mathbf{Q}_{j,i}^{(2)}\rangle a_{i}= 0, &\forall i \in \{1\dots n_\mathrm{e}\}\label{eq:optconda}
\end{align}
\end{subequations}
whenever the convexity condition
\begin{equation}\label{eq:sorder}
	\nabla^2_{a_i} \mathcal{L} (\mathbf{a}, \mathbf{c}, \mathbf{R}_1, \dots, \mathbf{R}_{n_\mathrm{lc}}, \lambda, \bm{\mu}) =  - 2 \sum_{j=1}^{n_\mathrm{lc}} \langle \mathbf{R}_j, \mathbf{Q}_{j,i}^{(2)}\rangle \ge 0, \quad\forall i \in \{1\dots n_\mathrm{e}\}
\end{equation}
holds. Since $\mathbf{R}_j \succeq 0$ by definition and $\mathbf{Q}_{j,i}^{(2)} \succeq 0$ by construction, as the bending part of the stiffness matrix is positive semi-definite, we have 
\begin{equation}
	\sum_{j=1}^{n_\mathrm{lc}} \langle \mathbf{R}_j, \mathbf{Q}_{j,i}^{(2)}\rangle \ge 0,\quad \forall i \in \{1\dots n_\mathrm{e}\}.
\end{equation}
When combined with Eq. \eqref{eq:sorder}, we obtain
\begin{equation}\label{eq:equality}
	\sum_{j=1}^{n_\mathrm{lc}} \langle \mathbf{R}_j, \mathbf{Q}_{j,i}^{(2)}\rangle = 0,\quad \forall i \in \{1\dots n_\mathrm{e}\}.
\end{equation}
Putting \eqref{eq:equality} into \eqref{eq:optconda} then provides us with
\begin{equation}
	\lambda \ell_i - \mu_i - \sum_{j=1}^{n_\mathrm{lc}} \langle \mathbf{R}_j, \mathbf{Q}_{j,i}^{(1)}\rangle = 0, \quad\forall i \in \{1\dots n_\mathrm{e}\},
\end{equation}
which is, after eliminating the dual variable $\bm{\mu} \ge \mathbf{0}$, 
\begin{equation}
\lambda \ell_i - \sum_{j=1}^{n_\mathrm{lc}} \langle \mathbf{R}_j, \mathbf{Q}_{j,i}^{(1)}\rangle \ge 0, \quad\forall i \in \{1\dots n_\mathrm{e}\},
\end{equation}
Finally, the best bound attainable from the dual function is obtained from a solution to the dual linear semidefinite program
\begin{subequations}
	\begin{alignat}{5}
	\min_{\mathbf{R}_1,\dots,\mathbf{R}_{n_\mathrm{lc}}, \lambda}\;\; && \sum_{j=1}^{n_\mathrm{lc}} \langle \mathbf{R}_j, \mathbf{Q}_{j,0}\rangle + \lambda \overline{V} \\
	 \mathrm{s.t.}\;\; && \langle \mathbf{R}_j, \mathbf{Q}_{j,\mathrm{c}}^{(1)}\rangle \; &&=&& \omega_j, && \;\forall j \in \{1\dots n_\mathrm{lc}\},\\
	 && \lambda \ell_i - \sum_{j=1}^{n_\mathrm{lc}} \langle \mathbf{R}_j, \mathbf{Q}_{j,i}^{(1)}\rangle &&\ge&& 0, && \forall i \in \{1\dots n_\mathrm{e}\},\\
	 && \sum_{j=1}^{n_\mathrm{lc}} \langle \mathbf{R}_j, \mathbf{Q}_{j,i}^{(2)}\rangle &&=&& 0,&& \forall i \in \{1\dots n_\mathrm{e}\},\\
	 && \lambda\; &&\ge&& 0,\\
	 && \mathbf{R}_j \;&& \succeq&& 0, && \forall j \in \{1\dots n_\mathrm{lc}\}.
	\end{alignat}
\end{subequations}

\subsection{Feasible region}
\begin{figure}[!htbp]
\begin{subfigure}{0.3\linewidth}
\centering
\begin{tikzpicture}
	\scaling{2};
	
	\point{a}{0}{0};
	\point{b}{1}{1};
	\point{c}{0}{2};
	
	\beam{2}{a}{b};
	\beam{2}{b}{c};
	
	\support{3}{a}[270];
	\support{3}{c}[270];
	
	\dimensioning{1}{a}{b}{-1.6}[$1$]
	\dimensioning{2}{a}{c}{-1.6}[$2$]
	
	\notation{1}{a}{\circled{$1$}}[below=6mm];
	\notation{1}{b}{\circled{$2$}}[above=3mm];
	\notation{1}{c}{\circled{$3$}}[below=6mm];
	
	\notation{4}{a}{b}[$1$];
	\notation{4}{b}{c}[$2$];
	
	\load{1}{b}[0][-1.0][0.0];
	\load{1}{b}[90][-0.5][0.0];
	\notation{1}{b}{$0.6735$}[left=9mm];
	\notation{1}{b}{$0.3265$}[below=4.5mm];
	
	\draw[->] (0.0,0)--(1,0);
	\node at (0.6, -0.15) {$x$};
	\draw[->] (0.0,0)--(0,1);
	\node at (0.15, 0.6) {$y$};
\end{tikzpicture}
\caption{}
\end{subfigure}\hfill%
\begin{subfigure}{0.2\linewidth}
\centering
\begin{tikzpicture}[scale=0.25]
	\scaling{0.25}
	\point{a}{0}{0};
	\point{b}{4}{0};
	\point{c}{5}{0};
	\draw[black, fill=gray] (0,0) circle (5);
	\draw[black, fill=white] (0,0) circle (4);
	
	\dimensioning{1}{a}{b}{0}[$4t$];
	\dimensioning{1}{b}{c}{0}[\textcolor{white}{$t$}];
\end{tikzpicture}
\caption{}
\end{subfigure}\hfill%
\begin{subfigure}{0.5\linewidth}
	\includegraphics[width=\linewidth]{feas.png}
	\caption{}
\end{subfigure}\\
\begin{subfigure}{0.3\linewidth}
	\includegraphics[width=\linewidth]{fr3.png}
\end{subfigure}\hfill%
\begin{subfigure}{0.3\linewidth}
	\includegraphics[width=\linewidth]{fr1.png}
\end{subfigure}
\caption{Sample problem definition}
\end{figure}

\begin{figure}[!htbp]
\begin{subfigure}{0.45\linewidth}
	\includegraphics[width=\linewidth]{R1.png}
	\caption{Feasible space of the first relaxation.}
\end{subfigure}%
\hfill\begin{subfigure}{0.45\linewidth}
	\includegraphics[width=\linewidth]{R2.png}
	\caption{Feasible space of the second relaxation.}
\end{subfigure}\\
\begin{subfigure}{0.45\linewidth}
	\includegraphics[width=\linewidth]{R3.png}
	\caption{Feasible space of the third relaxation.}
\end{subfigure}%
\hfill\begin{subfigure}{0.45\linewidth}
	\includegraphics[width=\linewidth]{Rf.png}
	\caption{Feasible space of the original problem.}
\end{subfigure}
\end{figure}

\begin{figure}[!htbp]
	\centering
	
\end{figure}

\begin{itemize}
	\item We assume that $p_1 + p_2 = 1$, $p_1\ge 0$, $p_2 \ge 0$
	\item Design domain with changing ratio $p_1 / p_2$
	\item Negative values either let the feasible space without effect, or reflect with respect to the uniform design
	\item TODO: relaxed design space for a fixed ratio $p_1 / p_2$
\end{itemize}

\subsection{Non-increasing upper bound}

Sufficiency condition:
\begin{equation}
\mathbf{f}(\mathbf{y}_a^{*(1)})^\mathrm{T} \mathbf{K}(\mathbf{y}_a^{*(1)})^\dagger  \mathbf{f}(\mathbf{y}_a^{*(1)}) - 0.5 \mathbf{f}^\mathrm{T} \mathbf{K}(\overline{\mathbf{a}})^{-1} \mathbf{f} (y_\mathrm{c}^{*(1)}+1) \le \varepsilon
\end{equation}
We know that
\begin{itemize}
	\item $y_\mathrm{c}^{*(1)}$ is non-decreasing as the hierarchy is monotonically convergent and produces tighter feasible space with increased relaxation number (from construction)
	\item $0.5 \mathbf{f}^\mathrm{T} \mathbf{K}(\overline{\mathbf{a}})^{-1} \mathbf{f}$ is constant, so that $0.5 \mathbf{f}^\mathrm{T} \mathbf{K}(\overline{\mathbf{a}})^{-1} \mathbf{f} (y_\mathrm{c}^{*(1)}+1)$ is also non-decreasing
	\item it remains to show that the upper bound $\mathbf{f}(\mathbf{y}_a^{*(1)})^\mathrm{T} \mathbf{K}(\mathbf{y}_a^{*(1)})^\dagger  \mathbf{f}(\mathbf{y}_a^{*(1)})$ is non-increasing
	\begin{itemize}
		\item Each principal submatrix of the localizing matrix must be PSD
		$$\begin{pmatrix}
			0.5 \mathbf{f}^\mathrm{T} \mathbf{K}(\overline{\mathbf{a}})^{-1} \mathbf{f} (y_\mathrm{c}^{*(1)}+1) & -\mathbf{f}(\mathbf{y}_a^{*(1)})^\mathrm{T}\\
			-\mathbf{f}(\mathbf{y}_a^{*(1)})^\mathrm{T} & \mathbf{K}(\mathbf{y}_a^{*(1)},\mathbf{y}_a^{*(2)})
		\end{pmatrix}\; \succeq \; 0$$
		\item This is equivalent to
		\begin{equation}
		0.5 \mathbf{f}^\mathrm{T} \mathbf{K}(\overline{\mathbf{a}})^{-1} \mathbf{f} (y_\mathrm{c}^{*(1)}+1) - \mathbf{f}(\mathbf{y}_a^{*(1)})^\mathrm{T} \mathbf{K}(\mathbf{y}_a^{*(1)},\mathbf{y}_a^{*(2)})^\dagger  \mathbf{f}(\mathbf{y}_a^{*(1)}) \ge 0
		\end{equation}
		\item Since $0.5 \mathbf{f}^\mathrm{T} \mathbf{K}(\overline{\mathbf{a}})^{-1} \mathbf{f} (y_\mathrm{c}^{*(1)}+1)$ is minimized and non-decreasing, we have that
		\begin{equation}
		\mathbf{f}(\mathbf{y}_a^{*(1)})^\mathrm{T} \mathbf{K}(\mathbf{y}_a^{*(1)},\mathbf{y}_a^{*(2)})^\dagger  \mathbf{f}(\mathbf{y}_a^{*(1)})
		\end{equation}
		is a non-increasing function
		\item Replacing $\mathbf{y}_a^{*(2)}$ with $\left(\mathbf{y}_a^{*(1)}\right)^2$ increases compliance, but how much?
\end{itemize}
\end{itemize}}

\bibliography{liter_abbr.bib}
\bibliographystyle{springernat}

\end{document}